\renewcommand{\epsilon}{\varepsilon}
\newtheorem{theorem}{Theorem}[section]
\newtheorem{prop}[theorem]{Proposition}
\newtheorem{lemma}[theorem]{Lemma}
\newtheorem*{theoremA}{Theorem A}
\newtheorem*{corB}{Corollary B}
\newtheorem*{propC}{Proposition C}
\newtheorem*{theoremD}{Theorem D}
\newtheorem*{theoremE}{Theorem E}
\newtheorem*{propF}{Proposition F}
\theoremstyle{definition}
\newtheorem{definition}[theorem]{Definition}
\theoremstyle{remark}
\newtheorem{remark}[theorem]{Remark}
\newcommand{\cd}{\operatorname{cd}}
\newcommand{\vcd}{\operatorname{vcd}}
\newcommand{\Q}{\mathbb Q}
\newcommand{\Z}{\mathbb Z}
\newcommand{\N}{\mathbb N}
\newcommand{\poset}{|\Lambda(G)|}
\newcommand{\FP}{\operatorname{FP}}
\newcommand{\fpinfty}{{\FP}_{\infty}}
\newcommand{\fp}{\operatorname{FP}}
\newcommand{\ff}{\operatorname{F}}
\newcommand{\F}{{\mathbb F_p}}
\newcommand{\EXT}{\operatorname{Ext}}
\newcommand{\cohom}[3]{H^{{\raise1pt\hbox{$\scriptstyle#1$}}}(#2\>\!,#3)}
\newcommand{\tatecohom}[3]{\widehat H^{{\raise1pt\hbox{$\scriptstyle#1$}}}(#2\>\!,#3)}
\newcommand{\Cohom}[3]%
  {H^{{\raise1pt\hbox{$\scriptstyle#1$}}}\big(#2\>\!,#3\big)}
\newcommand{\Tatecohom}[3]%
  {\widehat H^{{\raise1pt\hbox{$\scriptstyle#1$}}}\big(#2\>\!,#3\big)}
\newcommand{\homol}[3]{H_{{\lower1pt\hbox{$\scriptstyle#1$}}}(#2\>\!,#3)}
\newcommand{\homolog}[2]{H_{{\lower1pt\hbox{$\scriptstyle#1$}}}(#2)}
\newcommand{\COIND}{\operatorname{Coind}}
\newcommand{\IND}{\operatorname{Ind}}
\newcommand{\colim}{\varinjlim}
\renewcommand{\ker}{\operatorname{Ker}}
\newcommand{\mono}{\rightarrowtail}
\newcommand{\epi}{\twoheadrightarrow}
\newcommand{\ra}{\rightarrow}
\newcommand{\eg}{{\underline EG}}
\newcommand{\eh}{{\underline EH}}
\newcommand{\Hom}{\operatorname{Hom}}
\newcommand{\blah}{{\phantom M}}
\newcommand{\Mod}{\mathfrak{Mod}}
\newcommand{\vfp}{\operatorname{VFP}}
\title{When is Group Cohomology Finitary?}
\author{Martin Hamilton}
\address{Department of Mathematics, University of Glasgow, University Gardens, Glasgow G12 8QW, United Kingdom}
\email{m.hamilton@maths.gla.ac.uk}
\subjclass[2000]{20J06 20J05 18G15}
\keywords{cohomology of groups, finitary functors}
\begin{document}

\begin{abstract}
  If $G$ is a group, then we say that the functor $H^n(G,-)$ is
  \emph{finitary} if it commutes with all filtered colimit systems
  of coefficient modules. We investigate groups with
  \emph{cohomology almost everywhere finitary}; that is, groups
  with $n$th cohomology functors finitary for all sufficiently
  large $n$. We establish sufficient conditions for a group $G$
  possessing a finite dimensional model for $\eg$ to have
  cohomology almost everywhere finitary. We also prove a stronger
  result for the subclass of groups of finite virtual
  cohomological dimension, and use this to answer a question of
  Leary and Nucinkis. Finally, we show that if $G$ is a locally
  (polycyclic-by-finite) group, then $G$ has cohomology almost
  everywhere finitary if and only if $G$ has finite virtual
  cohomological dimension and the normalizer of every non-trivial
  finite subgroup of $G$ is finitely generated.
\end{abstract}

\maketitle

\section{Introduction}

Let $G$ be a group and $n\in\N$. The $n$th cohomology of $G$ is a
functor $$H^n(G,-):=\EXT^n_{\Z G}(\Z,-)$$ from the category of $\Z
G$-modules to the category of $\Z$-modules, and we say that it is
\emph{finitary} if it commutes with all filtered colimit systems
of coefficient modules (see \S $3.18$ in
\cite{AccessibleCategories}; also \S $6.5$ in \cite{Leinster}).

Brown \cite{BrownPaper} has characterised groups of type
$\fpinfty$ in terms of finitary functors (see also results of
Bieri, Theorem $1.3$ in \cite{Homdim}):

\begin{prop}\label{brown prop}
  A group $G$ is of type $\fpinfty$ if and only if $H^n(G,-)$ is
  finitary for all $n$.
\end{prop}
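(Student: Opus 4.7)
My plan is to prove the two implications separately. For the easy direction, assume $G$ is of type $\fpinfty$ and choose a resolution $P_*\twoheadrightarrow\Z$ of the trivial module by finitely generated projective $\Z G$-modules. For each fixed $i$, the functor $\Hom_{\Z G}(P_i,-)$ commutes with filtered colimits because $P_i$ is a finitely generated (hence compact) object of the category of $\Z G$-modules. Filtered colimits of abelian groups are exact, so they also commute with the passage to cohomology of a cochain complex. Combining the two observations, $H^n(G,-)\iso H^n(\Hom_{\Z G}(P_*,-))$ is finitary for every $n$.

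For the converse, I would build an $\fpinfty$ resolution of $\Z$ by induction on the dimension. The base case is immediate because $\Z$ is cyclic over $\Z G$, so there is a surjection onto $\Z$ from a finitely generated free $\Z G$-module $P_0$. Inductively, suppose one has a partial resolution $P_{n-1}\to\cdots\to P_0\twoheadrightarrow\Z$ by finitely generated free modules; the goal is to show that the $n$th syzygy $K_{n-1}=\ker(P_{n-1}\to P_{n-2})$ is finitely generated, so that a finitely generated free $P_n$ surjects onto it and the resolution extends one more step. Dimension-shifting identifies $H^{n+1}(G,M)\iso\ext{1}{\Z G}{K_{n-1}}{M}$ for every coefficient module $M$, and since every $P_i$ is finitely generated, the finitariness of $H^{n+1}(G,-)$ translates (via the long exact sequence of $\EXT$ attached to the partial resolution) into a filtered-colimit-preservation property of $\Hom_{\Z G}(K_{n-1},-)$.

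The crux, and the main obstacle, is extracting finite generation of $K_{n-1}$ from this colimit-preservation property. The standard trick is to write $K_{n-1}=\colim_\lambda N_\lambda$ as the filtered colimit of its finitely generated $\Z G$-submodules and to evaluate $\Hom_{\Z G}(K_{n-1},-)$ on the quotient system $\{K_{n-1}/N_\lambda\}$, whose colimit is zero. Commutation then forces the identity of $K_{n-1}$ to factor through some $N_\lambda$, so $K_{n-1}=N_\lambda$ is finitely generated. This closes the induction and produces the required $\fpinfty$ resolution. The delicate point, as always with arguments of Bieri--Brown type, is checking that the colimit-preservation really does descend from $H^{n+1}(G,-)$ to $\Hom_{\Z G}(K_{n-1},-)$; this is where the finite generation of the earlier $P_i$ is essential.
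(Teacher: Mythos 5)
The paper offers no proof of this proposition; it cites the result to Brown and to Bieri (Theorem $1.3$ of \cite{Homdim}), so there is no in-paper argument for your proposal to track. Your forward direction is the standard one and is fine: finitely generated projectives are compact, and filtered colimits are exact, so they pass through the cochain cohomology of $\Hom_{\Z G}(P_*,-)$.

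The converse has the right overall shape (the Bieri--Brown induction, extending a finitely generated free partial resolution one syzygy at a time, with the quotient-system compactness trick), but the pivotal step is misstated in a way that matters. You dimension-shift to obtain $H^{n+1}(G,-)\cong\EXT^1_{\Z G}(K_{n-1},-)$ and then assert that finitariness of $H^{n+1}(G,-)$ yields a filtered-colimit-preservation property of $\Hom_{\Z G}(K_{n-1},-)$. That implication does not exist: the dimension shift controls $\EXT^{\geq 1}(K_{n-1},-)$ and says nothing about $\EXT^0=\Hom$. What actually drives the step is the long exact $\EXT$-sequence of $0\to K_{n-1}\to P_{n-1}\to K_{n-2}\to 0$, namely
$$0\to\Hom_{\Z G}(K_{n-2},M)\to\Hom_{\Z G}(P_{n-1},M)\to\Hom_{\Z G}(K_{n-1},M)\to\EXT^1_{\Z G}(K_{n-2},M)\to 0,$$
together with the identification $\EXT^1_{\Z G}(K_{n-2},-)\cong H^{n}(G,-)$ (note: $H^n$, not $H^{n+1}$). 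Evaluating the tail of this sequence on your quotient system $\{K_{n-1}/N_\lambda\}$ with vanishing colimit, both $\colim_\lambda\Hom_{\Z G}(P_{n-1},K_{n-1}/N_\lambda)$ and $\colim_\lambda H^{n}(G,K_{n-1}/N_\lambda)$ vanish --- the first because $P_{n-1}$ is finitely generated free, the second because $H^{n}(G,-)$ is finitary --- so exactness of filtered colimits forces $\colim_\lambda\Hom_{\Z G}(K_{n-1},K_{n-1}/N_\lambda)=0$, and your compactness argument gives $K_{n-1}$ finitely generated. The correct cohomological input is therefore $H^{n}$, entering via $\EXT^1(K_{n-2},-)$; the isomorphism $\EXT^1(K_{n-1},-)\cong H^{n+1}(G,-)$, while true, is the wrong gadget, and invoking its finitariness does not close the induction.
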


It seems natural, therefore, to consider groups whose $n$th
cohomology functors are finitary for \emph{almost all} $n$. We say
that such a group has \emph{cohomology almost everywhere
finitary}.

In this paper, we shall investigate groups with cohomology almost
everywhere finitary. We begin with the class of locally
(polycyclic-by-finite) groups, and in \S\ref{sec:Thm A} we prove
the following:

\begin{theoremA}
  Let $G$ be a locally (polycyclic-by-finite) group. Then $G$ has
  cohomology almost everywhere finitary if and only if $G$ has
  finite virtual cohomological dimension and the normalizer of
  every non-trivial finite subgroup of $G$ is finitely generated.
\end{theoremA}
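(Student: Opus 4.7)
The plan is to exploit the structural fact that within locally (polycyclic-by-finite) groups, being finitely generated, being polycyclic-by-finite, and being of type $\fpinfty$ all coincide. A polycyclic-by-finite group is $\fpinfty$ by classical results; conversely, any finitely generated subgroup of $G$ is polycyclic-by-finite. Proposition~\ref{brown prop} then tells us that such a subgroup has cohomology finitary in every degree if and only if it is finitely generated. The core of the proof is a decomposition --- proved via a spectral sequence for $G$ acting on a finite-dimensional $\eg$ --- expressing $H^n(G,-)$ in high degrees as a functor assembled from the $H^n(N_G(F),-)$ as $F$ ranges over conjugacy classes of non-trivial finite subgroups of $G$.

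\emph{Reverse direction.} Assume $\vcd G < \infty$ and every $N_G(F)$ with $F$ a non-trivial finite subgroup is finitely generated. Finite vcd gives a finite-dimensional $\eg$; combined with the local polycyclic-by-finite structure and Mal'cev's theorem, $G$ has only finitely many conjugacy classes of finite subgroups, of uniformly bounded order. Each $N_G(F)$ is then polycyclic-by-finite and hence $\fpinfty$, so $H^n(N_G(F),-)$ is finitary in every degree. The decomposition then yields finitary-ness of $H^n(G,-)$ for all $n$ beyond a bound depending on $\vcd G$.

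\emph{Forward direction.} Assume $H^n(G,-)$ is finitary for $n \ge n_0$. To show $\vcd G < \infty$, suppose otherwise, so $G$ contains a strictly ascending chain $G_1 \le G_2 \le \cdots$ of polycyclic-by-finite subgroups with $\vcd G_i \to \infty$. For each such $G_i$ one can find a $\Z G_i$-module $B_i$ with $H^{n_0}(G_i, B_i) \ne 0$ (polycyclic-by-finite groups support non-trivial cohomology in all degrees up to their Hirsch length, via Poincar\'e duality on a torsion-free subgroup of finite index). The plan is to assemble the $\COIND_{G_i}^G B_i$ via Shapiro's lemma into a filtered colimit system whose $n_0$th cohomology fails to commute with the colimit, contradicting the finitary hypothesis in the fixed degree $n_0$. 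Once $\vcd G < \infty$ is secured, the decomposition forces each $H^n(N_G(F),-)$ to be finitary for all large $n$; an argument parallel to the finite-vcd argument just given --- showing that a non-finitely-generated locally polycyclic-by-finite group cannot have cohomology finitary in all sufficiently large degrees --- then delivers finite generation of $N_G(F)$.

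\emph{Main obstacle.} The key technical ingredient is the cohomological decomposition expressing $H^n(G,-)$ in high degrees via the $H^n(N_G(F),-)$, which underpins both directions and must transport finitary-ness cleanly in each. A second delicate point is arranging the colimit systems in the forward direction so that failure of finitary-ness is detected in a single fixed degree $\ge n_0$, rather than being dispersed across infinitely many degrees; this is precisely what makes the almost-everywhere (rather than single-degree) finitary hypothesis the right one.
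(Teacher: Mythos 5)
Your reverse direction is essentially in the right spirit --- it parallels the paper's argument of decomposing $H^n(G,-)$ in high degrees via stabilizers of the action on a finite-dimensional complex (the paper uses the augmented cellular chain complex of a suitably connected completion of the poset complex $\poset$, rather than a spectral sequence, and the stabilizers are intersections $\bigcap_i N_G(K_i)$ rather than single normalizers, but this is easily repaired since a subgroup of a polycyclic-by-finite group is again polycyclic-by-finite).

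The forward direction, however, contains a genuine gap. Your closing claim --- ``a non-finitely-generated locally polycyclic-by-finite group cannot have cohomology finitary in all sufficiently large degrees'' --- is false. The additive group $\Q$ is locally polycyclic, not finitely generated, and has cohomological dimension $2$ over $\Z$, so $H^n(\Q,-)=0$ for $n>2$ and $\Q$ has cohomology almost everywhere finitary. Almost-everywhere finitary is a condition in \emph{high} degrees only, whereas finite generation is detected by $H^0$; with a torsion-free group there is no mechanism to propagate information from high degrees down to degree $0$. The paper's argument exploits precisely the presence of torsion in $N_G(F)$: one chooses $E\le F$ of order $p$, reduces to $H=NE$ with $N$ torsion-free normal of finite index, shows $N_H(E)$ has cohomology a.e.\ finitary, passes to $C_H(E)\cong E\times C_N(E)$, and then applies the K\"unneth theorem over $\F$ to get $H^k(E\times C_N(E),-)\cong\bigoplus_{i=0}^k H^i(C_N(E),-)$ on modules with trivial $E$-action. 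Because $H^0(C_N(E),-)$ appears as a summand in \emph{every} degree $k$, finitary-ness of a single high $H^k$ forces $H^0(C_N(E),-)$ to be finitary, which is equivalent to $C_N(E)$ being finitely generated. Without this K\"unneth step the implication simply fails. Separately, your sketch for obtaining $\vcd G<\infty$ by assembling coinduced modules along an ascending chain is not carried out and would have to replace Kropholler's continuity theorem (Theorem $2.1$ of \cite{Continuity:cohomologyfunctors}), which is the deep input the paper uses at that point; as written it is not clear the colimit system you describe can be arranged to detect the failure in a single fixed degree.
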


If $G$ is a locally (polycyclic-by-finite) group with cohomology
almost everywhere finitary, then a result of Kropholler (Theorem
$2.1$ in \cite{Continuity:cohomologyfunctors}) shows that $G$ has
a finite dimensional model for the classifying space $\eg$ for
proper actions and, furthermore, that there is a bound on the
orders of the finite subgroups of $G$ (see \S $5$ of
\cite{LearyNucinkis} for a brief explanation of the classifying
space $\eg$). In \S \ref{sec:Lemma 1.2} we prove the following
Lemma:

\begin{lemma}\label{finite vcd lemma}
  Let $G$ be a locally (polycyclic-by-finite) group. Then the
  following are equivalent: \begin{enumerate}
    \item There is a finite dimensional model for $\eg$, and there is a
    bound on the orders of the finite subgroups of $G$;

    \item $G$ has finite virtual cohomological dimension; and

    \item There is a finite dimensional model for $\eg$, and $G$
    has finitely many conjugacy classes of finite subgroups.
  \end{enumerate}
\end{lemma}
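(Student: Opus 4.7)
The plan is to prove the cyclic chain (ii) $\Rightarrow$ (iii) $\Rightarrow$ (i) $\Rightarrow$ (ii), with the bulk of the work concentrated in the last implication. The implication (iii) $\Rightarrow$ (i) is immediate: only finitely many conjugacy classes of finite subgroups involve only finitely many orders, giving the required uniform bound.

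For (ii) $\Rightarrow$ (iii), I would first reduce to the polycyclic-by-finite case. If $G$ has finite $\vcd$, then $G$ has a torsion-free subgroup $H$ of finite index with $\cd H < \infty$. Since $H$ is a subgroup of a locally (polycyclic-by-finite) group and is torsion-free, it is locally polycyclic, and $\cd H < \infty$ forces its Hirsch length to be finite. A standard Hirsch-length argument (take a finitely generated subgroup of maximal Hirsch length and adjoin elements one at a time; the Hirsch length cannot grow, and the supergroup is commensurable with the original) then shows $H$ itself is polycyclic, so $G$ is polycyclic-by-finite. For polycyclic-by-finite groups, a finite dimensional model for $\eg$ exists (the Hirsch length gives the dimension), and the classical theorem of Mal'cev provides only finitely many conjugacy classes of finite subgroups.

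The main implication is (i) $\Rightarrow$ (ii). Assume $\eg$ has dimension $d$ and that every finite subgroup of $G$ has order at most $m$. Restricting $\eg$ to any finitely generated subgroup $K$ of $G$ produces a $d$-dimensional proper $K$-CW model for $\underline{E}K$. Since $K$ is polycyclic-by-finite we have an equality between $\dim \underline{E}K$ and the Hirsch length $h(K)$, so $h(K) \le d$ uniformly over all finitely generated $K \le G$. The next step is to combine this uniform Hirsch bound with the uniform torsion bound $m$ to conclude that $G$ itself is polycyclic-by-finite: choose a finitely generated subgroup $K_0$ of maximal Hirsch length; for any $g \in G$ the group $\langle K_0, g\rangle$ is polycyclic-by-finite with the same Hirsch length as $K_0$, hence $K_0$ has finite index in $\langle K_0, g\rangle$, and the torsion bound keeps that index bounded independently of $g$. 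Standard arguments then force $G$ to be a finite extension of $K_0$ modulo a controlled torsion piece, so $G$ is polycyclic-by-finite, from which finite $\vcd$ is immediate.

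The principal obstacle is the last step of (i) $\Rightarrow$ (ii): transferring the uniform bounds on Hirsch length and on orders of finite subgroups (data about finitely generated subgroups) into the global conclusion that $G$ itself is polycyclic-by-finite. Without the torsion bound one would only get $G$ of finite Hirsch length, which is much weaker (locally finite groups provide obstructions), so the essential content is controlling the interplay between the torsion part and the bounded Hirsch structure; everything else in the lemma is essentially formal.
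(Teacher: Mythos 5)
The central step of your argument is the claim that a locally (polycyclic-by-finite) group satisfying (i) or (ii) must in fact be polycyclic-by-finite, and this is false. The additive group $\Q$ (or $\Z[1/p]$) is torsion-free, locally cyclic, and has $\cd_\Z\Q=2$; it satisfies all three conditions of the lemma with the trivial bound on torsion and a two-dimensional model for $\eg$, yet it is not polycyclic-by-finite (no finite-index subgroup of $\Q$ is finitely generated). Concretely, the ``standard Hirsch-length argument'' you appeal to only shows that a torsion-free locally polycyclic group $H$ of finite Hirsch length is a directed union of mutually commensurable polycyclic subgroups; it does not show that $H$ itself is polycyclic, and $\Q=\bigcup_n \tfrac{1}{n!}\Z$ illustrates exactly how that union can fail to stabilize. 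The same error undermines your (i) $\Rightarrow$ (ii): the uniform bounds $h(K)\le d$ and $|K_{\mathrm{tors}}|\le m$ over finitely generated $K\le G$ do not let you conclude that $K_0$ has finite index in $G$.

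Because of this, your reduction to Mal'cev's theorem for (ii) $\Rightarrow$ (iii) does not go through. The paper avoids this trap entirely. For (i) $\Rightarrow$ (ii) it tensors the chain complex of the finite-dimensional model for $\eg$ with $\Q$ to get finite rational cohomological dimension, invokes Hillman--Linnell to bound the Hirsch length, observes that the torsion bound forces the maximal locally finite normal subgroup $\tau(G)$ to be finite, and then uses Wehrfritz's structure theorem to extract a poly-(torsion-free abelian) characteristic subgroup of finite index; this subgroup has finite $\cd$ because it has finite Hirsch length, so $\vcd G<\infty$. For (ii) $\Rightarrow$ (iii), rather than citing Mal'cev (which only applies to polycyclic-by-finite groups), the paper proves a cohomological finiteness lemma --- that $H^1(Q,A)$ is finite whenever $Q$ is finite and $A$ is $\Z$-torsion-free of finite Hirsch length --- and then inducts on the Hirsch length of the poly-(torsion-free abelian)-by-finite group, using the bijection between $H^1(Q,A)$ and conjugacy classes of complements to $A$. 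You would need to replace your polycyclic-by-finite reduction with something along these lines to obtain a correct proof.
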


Therefore, we can reduce our study to those groups $G$ which have
finite virtual cohomological dimension. We then have the following
short exact sequence: $$N\mono G\epi Q,$$ where $N$ is a
torsion-free, locally (polycyclic-by-finite) group of finite
cohomological dimension, and $Q$ is a finite group. Hence, in
order to prove Theorem A, we must consider three cases. The first
case is when $G$ is torsion-free. In this case, $G$ has finite
cohomological dimension, so $H^n(G,-)=0$, and hence is finitary,
for all sufficiently large $n$. The next simplest case, when $G$
is the direct product $N\times Q$ is treated in \S \ref{sec:
direct prod}, and the general case is then proved in \S
\ref{sec:Thm A}.

Now, if $G$ is any group with cohomology almost everywhere
finitary, and $H$ is a subgroup of $G$ of finite index, then it is
always true that $H$ also has cohomology almost everywhere
finitary (see Lemma \ref{finite index} below). However, in the
case of locally (polycyclic-by-finite) groups we can say much more
than this:

\begin{corB}
  Let $G$ be a locally (polycyclic-by-finite) group. If $G$ has
  cohomology almost everywhere finitary, then every subgroup of
  $G$ also has cohomology almost everywhere finitary.
\end{corB}

This is not true in general, however, as can been seen from
Proposition \ref{cor B not true in general} below.

Next, we consider the class of elementary amenable groups, and
prove the following in \S\ref{sec:El Am}:

\begin{propC}
  Let $G$ be an elementary amenable group with cohomology almost
  everywhere finitary. Then $G$ has finitely many conjugacy
  classes of finite subgroups, and $C_G(E)$ is finitely generated
  for every $E\leq G$ of order $p$.
\end{propC}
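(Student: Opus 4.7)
The plan is to begin by applying Kropholler's continuity theorem (Theorem~2.1 in \cite{Continuity:cohomologyfunctors}). Since elementary amenable groups lie in Kropholler's class $\hf$, the hypothesis that $G$ has cohomology almost everywhere finitary forces $G$ to admit a finite dimensional model for $\eg$ and to have a uniform bound on the orders of its finite subgroups; in particular $G$ has finite Hirsch length. I would next invoke the Hillman--Linnell structure theorem for elementary amenable groups of finite Hirsch length, obtaining a characteristic series $1 \trianglelefteq T \trianglelefteq A \trianglelefteq G$ with $T$ locally finite, $A/T$ torsion-free abelian of finite rank, and $G/A$ virtually polycyclic. The bounded torsion hypothesis forces $T$ to be finite: a locally finite group in which every finitely generated subgroup has order at most $M$ has cardinality at most $M$. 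Consequently $G$ has finite virtual cohomological dimension.

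For the first conclusion I would argue as follows. Each finite subgroup $F \leq G$ satisfies $F \cap A \leq T$ because $A/T$ is torsion-free, so $F$ projects to a finite subgroup of the virtually polycyclic quotient $G/A$. Virtually polycyclic groups admit a finite model for the classifying space for proper actions and hence only finitely many conjugacy classes of finite subgroups, while the finiteness of $T$ limits the lifts of each such class to finitely many $G$-conjugacy classes of finite subgroups of $G$.

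For the second conclusion, write $C = C_G(E)$ and note that $E$ sits centrally in $C$, giving a central extension $1 \to E \to C \to C/E \to 1$. I would combine its Lyndon--Hochschild--Serre spectral sequence with Shapiro's lemma applied to the permutation module $\F[G/C]$ (which satisfies $H^{\ast}(G,\F[G/C]) \cong H^{\ast}(C,\F)$) to transfer the almost-everywhere-finitary condition on $G$ into a finite-generation condition on $C$, using crucially that $E \cong \Z/p$ contributes nontrivial $p$-torsion to cohomology in every degree and that $C$ is itself elementary amenable of finite Hirsch length, so that the absence of a $\Z[1/q]$-type subgroup in $C$ can be upgraded to genuine finite generation.

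The main obstacle is this last step: $C_G(E)$ need not have finite index in $G$, so the straightforward inheritance principle of Lemma~\ref{finite index} does not apply. One must exploit the centrality of $E$ inside $C_G(E)$ to effect the transfer of finiteness, which is precisely why the conclusion is restricted to subgroups of prime order; the analogue for general finite subgroups is instead captured by the normalizer condition in Theorem~A.
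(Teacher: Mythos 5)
Your proposal diverges from the paper and has real gaps in both conclusions.

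For the first conclusion (finitely many conjugacy classes of finite subgroups), your argument that each finite subgroup $F$ projects to $G/A$ and that ``the finiteness of $T$ limits the lifts'' is not a proof. Given a fixed finite subgroup $\bar F$ of $G/A$, its preimage $A\bar F$ in $G$ could a priori contain infinitely many non-conjugate finite complements of $A$; the finiteness of $T$ does nothing to rule this out. The paper handles exactly this point in the proof of Lemma~\ref{finite vcd lemma}: one inducts on Hirsch length, and the base/inductive steps rest on the fact that $H^1(Q,A)$ is finite when $Q$ is finite and $A$ is a $\Z$-torsion-free $\Z Q$-module of finite Hirsch length (Lemma~\ref{first cohomology finite}), because conjugacy classes of complements to $A$ are parametrized by $H^1$. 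Without some such cohomological finiteness statement your lifting claim is unsupported. The paper simply cites that the proof of Lemma~\ref{finite vcd lemma} carries over verbatim to the elementary amenable case; you would still need to supply that argument.

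For the second conclusion, you have correctly identified the obstacle — $C_G(E)$ may have infinite index — but the route you sketch does not resolve it and would not work as written. When $[G:C]$ is infinite, the permutation module $\F[G/C]$ is the \emph{induced} module $\IND^G_C\F$, not the coinduced one, so Shapiro gives $H_*(G,\F[G/C])\cong H_*(C,\F)$ in homology, not the cohomology isomorphism you want. Even granting it, ``transferring the finitary condition into finite generation of $C$'' via the LHS spectral sequence of $E\monom C\epim C/E$ is not an argument: finitariness is a statement about the functor $H^n(G,-)$ commuting with filtered colimits, and nothing in your sketch explains how that interacts with a spectral sequence for a subgroup of infinite index. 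The paper's actual mechanism is geometric and completely bypasses this: after establishing finite vcd, choose a torsion-free normal subgroup $N$ of finite index and set $H:=NE$, which does have finite index and thus inherits cohomology a.e.~finitary by Lemma~\ref{finite index}. All non-trivial finite subgroups of $H$ have order $p$, so the poset $\Lambda(H)$ is zero-dimensional, and the augmentation kernel $J$ of $\Z\Lambda(H)\epim\Z$ is free over each $\Z K$; by Proposition~\ref{finite projective dimension}, $J$ has finite projective dimension over $\Z H$, whence for large $n$ one gets $H^n(H,-)\cong\prod_K H^n(N_H(K),-)$, and Lemma~\ref{direct sum fin implies each term fin} forces $N_H(E)$ to have cohomology a.e.~finitary. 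Then $C_H(E)\cong E\times C_N(E)$ has finite index in $N_H(E)$, so it too is a.e.~finitary, and Proposition~\ref{direct product prop} delivers that $C_N(E)$ is finitely generated; since $C_N(E)$ has finite index in $C_G(E)$, so is $C_G(E)$. This passage to the finite-index overgroup $H=NE$, together with the poset argument and the direct-product proposition, is the key idea your proposal is missing.
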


In \S\ref{sec:fin vcd} we investigate the class of groups of
finite virtual cohomological dimension. In this section we work
over a ring $R$ of prime characteristic $p$, instead of over $\Z$,
by defining the $n$th cohomology of a group $G$ as
$$H^n(G,-):=\EXT^n_{RG}(R,-).$$ In order to make it clear that we
are now working over $R$, we say that $H^n(G,-)$ is \emph{finitary
over $R$} if and only if the functor $\EXT^n_{RG}(R,-)$ is
finitary. We have an analogue of Proposition \ref{brown prop},
characterising the groups of type $\fpinfty$ over $R$ as those
with $n$th cohomology functors finitary over $R$ for all $n$. We
can similarly define the notion of a group having \emph{cohomology
almost everywhere finitary over $R$}, and we prove the following
result:

\begin{theoremD}
  Let $G$ be a group of finite virtual cohomological dimension,
  and $R$ be a ring of prime characteristic $p$. Then the
  following are equivalent: \begin{enumerate}
    \item $G$ has cohomology almost everywhere finitary over $R$;

    \item $G$ has finitely many conjugacy classes of elementary
    abelian $p$-subgroups and the normalizer of every non-trivial
    elementary abelian $p$-subgroup of $G$ is of type $\fpinfty$
    over $R$; and

    \item $G$ has finitely many conjugacy classes of elementary
    abelian $p$-subgroups and the normalizer of every non-trivial
    elementary abelian $p$-subgroup of $G$ has cohomology almost
    everywhere finitary over $R$.
  \end{enumerate}
\end{theoremD}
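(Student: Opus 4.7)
The plan is to establish the cycle (ii) $\Rightarrow$ (iii) $\Rightarrow$ (i) $\Rightarrow$ (ii). Since type $\fpinfty$ over $R$ is equivalent, by the $R$-analogue of Proposition~\ref{brown prop}, to having finitary cohomology in every degree, the implication (ii) $\Rightarrow$ (iii) is immediate.

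For (iii) $\Rightarrow$ (i), I would use that a group of finite virtual cohomological dimension admits a finite dimensional model for $\eg$, and exploit the isotropy spectral sequence
$$E_1^{p,q} = \bigoplus_{\sigma \in \Sigma_p/G} H^q(G_\sigma, M) \;\Rightarrow\; H^{p+q}(G, M)$$
arising from the $G$-action on such a model. Each stabilizer $G_\sigma$ is finite, and since $R$ has characteristic $p$, Quillen's detection theorem together with a Mackey decomposition rewrites the relevant $E_1$-contributions in high total degree as a finite direct sum (using the finite-conjugacy hypothesis in (iii)) of cohomology functors of the form $H^{q'}(N_G(E),-)$. Each such summand is finitary in almost every degree by (iii), and since the spectral sequence lives in a bounded filtration strip, it follows that $H^n(G,-)$ is finitary for $n \gg 0$.

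The main implication (i) $\Rightarrow$ (ii) is the key obstacle. I would first show that $G$ has only finitely many conjugacy classes of non-trivial elementary abelian $p$-subgroups by running the isotropy spectral sequence in reverse: the polynomial structure of $H^*(E, R)$ in characteristic $p$ guarantees that each conjugacy class $[E]$ contributes a non-vanishing term in arbitrarily high cohomological degree, so infinitely many such classes would produce an infinite direct sum of non-zero contributions in a fixed high degree, incompatible with $H^n(G,-)$ commuting with filtered colimits. Second, to upgrade ``$H^n(G,-)$ finitary for $n\gg 0$'' to the full $\fpinfty$ condition on each $N=N_G(E)$, I would apply the Lyndon--Hochschild--Serre spectral sequence for $E \triangleleft N$ and exploit a polynomial generator of $H^*(E, R)$ as a periodicity operator that propagates finitary-ness from arbitrarily high degrees back to every degree. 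Combined with Kropholler's continuity results (Theorem~$2.1$ in~\cite{Continuity:cohomologyfunctors}), this should yield (ii). The principal difficulty is precisely this propagation: the hypothesis only controls $H^n(G,-)$ for large $n$, whereas (ii) demands finitary behaviour of $H^n(N,-)$ for \emph{every} $n$, including $n=0$. The upgrade must use essentially the characteristic-$p$ algebraic structure of the cohomology of an elementary abelian $p$-subgroup, which has no analogue in characteristic zero and is what makes the normaliser condition genuinely strong.
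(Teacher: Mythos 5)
Your overall strategy (cycle (ii)\,$\Rightarrow$\,(iii)\,$\Rightarrow$\,(i)\,$\Rightarrow$\,(ii), reduce to $R=\F$, use the characteristic-$p$ structure of $H^*(E,\F)$) is the right shape, and you have correctly identified that the hard content lives in (i)\,$\Rightarrow$\,(ii). But there are several genuine gaps.

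For (iii)\,$\Rightarrow$\,(i), running the isotropy spectral sequence for an arbitrary finite-dimensional model of $\eg$ does not work. The $E_1$-page is a product over $G$-orbits of cells, and for a generic model this product is infinite; the hypothesis in (iii) bounds conjugacy classes of elementary abelian $p$-subgroups, not orbits of cells. Nor does ``Quillen plus Mackey'' convert a sum of $H^q(G_\sigma,-)$ with $G_\sigma$ finite into a sum of $H^{q'}(N_G(E),-)$: stabilizers of cells in $\eg$ are not related to normalizers in this way. The paper gets around both problems by replacing $\eg$ with the $G$-simplicial complex $|\mathcal{A}_p(G)|$ built from the poset of non-trivial elementary abelian $p$-subgroups. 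There, simplex stabilizers are \emph{by construction} intersections $\bigcap_i N_G(E_i)$ of normalizers (each of finite index in the top $N_G(E_l)$, so the a.e.-finitary hypothesis passes to them via Lemma~\ref{finite index}), and the number of orbits of $l$-simplices is finite precisely because there are finitely many conjugacy classes of elementary abelian $p$-subgroups. Since $|\mathcal{A}_p(G)|$ is not itself $\eg$, one must also deal with the excess top homology $\widetilde{H}_r(Y)$ after Kropholler--Mislin's embedding trick; the paper kills its contribution via Chouinard's theorem and Proposition~\ref{finite projective dimension}. None of this is recoverable from your sketch.

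For (i)\,$\Rightarrow$\,(ii), the LHS-plus-periodicity idea addresses only the last step, and even there it is shaky: you need first to show that $N_G(E)$ has cohomology a.e.\ finitary at all, which does not follow from finite index (normalizers need not have finite index) and requires the augmentation sequence $J\mono\F\Lambda\epi\F$ over the appropriate poset, Chouinard again, and an Eckmann--Shapiro decomposition. Once one has a.e.\ finitariness on a centralizer, the paper's actual upgrade to $\fpinfty$ passes through the decomposition $C_H(E)\cong E\times C_N(E)$ for $N$ a torsion-free normal subgroup of finite index, followed by a K\"unneth argument: $H^m(N\times E,-)\cong\bigoplus_{i\le m}H^i(N,-)$ on modules with trivial $E$-action, so one bad degree for $C_N(E)$ poisons \emph{every} higher degree for $C_H(E)$. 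Your ``periodicity operator'' is morally the same mechanism (multiplication by a polynomial class in $H^*(E,\F)$), but the LHS spectral sequence for $E\triangleleft N_G(E)$ does not obviously propagate finitariness downward, especially when $E$ has rank $>1$ (where $H^*(E,\F_p)$ is not polynomial for odd $p$); the paper sidesteps this entirely by first treating the order-$p$ case and then inducting on $|P|$ via a central subgroup of order $p$ and the quotient $C_G(E)/E$. Finally, your finiteness-of-conjugacy-classes step is in the right spirit, but to make it rigorous the paper invokes Henn's uniform $F$-isomorphism $\phi:H^*(G,\F)\to\lim_{\mathcal{A}_p(G)^{\mathrm{op}}}H^*(E,\F)$ together with the observation (Lemma~\ref{f.d. vect sp lemma}) that $H^n(G,-)$ finitary over $\F$ forces $H^n(G,\F)$ to be finite-dimensional; your ``incompatible with filtered colimits'' needs exactly this lemma to land.
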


We then adapt the proof of Theorem D slightly, and in \S
\ref{sec:L and N} we use it to prove the following result, which
answers a question of Leary and Nucinkis (Question $1$ in
\cite{LearyNucinkis}):

\begin{theoremE}
  Let $G$ be a group of type $\vfp$ over $\F$, and $P$ be a
  $p$-subgroup of $G$. Then the centralizer $C_G(P)$ of $P$ is
  also of type $\vfp$ over $\F$.
\end{theoremE}

Finally, in \S\ref{sec:EG} we return to working over $\Z$, and
consider the class of groups which possess a finite dimensional
model for $\eg$. We prove the following:

\begin{propF}
  Let $G$ be a group which possesses a finite dimensional model
  for the classifying space $\eg$ for proper actions. If
  \begin{enumerate}
    \item $G$ has finitely many conjugacy classes of finite
    subgroups; and

    \item The normalizer of every non-trivial finite subgroup of
    $G$ has cohomology almost everywhere finitary,
  \end{enumerate}

  Then $G$ has cohomology almost everywhere finitary.
\end{propF}

However, the converse of this result is false, and we shall
exhibit counter-examples in \S\ref{sec:EG} by using a theorem of
Leary (Theorem $20$ in \cite{LearyVF}). These counter-examples
show that the converse of Proposition F is false even for the
subclass of groups of finite virtual cohomological dimension.

\subsection{Acknowledgements}

I would like to thank my research supervisor Peter Kropholler for
all of his advice and support throughout this project. I would
also like to thank Ian Leary for useful discussions concerning the
converse of Proposition F, and for suggesting that my results
could be used to prove Theorem E.

\section{The Direct Product Case of Theorem A}\label{sec: direct
prod}

Suppose that $G=N\times Q$, where $N$ is a torsion-free, locally
(polycyclic-by-finite) group of finite cohomological dimension,
and $Q$ is a non-trivial finite group. We wish to show that $G$
has cohomology almost everywhere finitary if and only if the
normalizer of every non-trivial finite subgroup of $G$ is finitely
generated. Now, if $F$ is a non-trivial finite subgroup of $G$,
then $F$ must be a subgroup of $Q$, and so $N$ is a subgroup of
$N_G(F)$ of finite index. Hence, $N_G(F)$ is finitely generated if
and only if $N$ is. It is therefore enough to prove that $G$ has
cohomology almost everywhere finitary if and only if $N$ is
finitely generated.

We begin by assuming that $N$ is finitely generated. Therefore $N$
is polycyclic-by-finite, and hence of type $\fpinfty$ (Examples
$2.6$ in \cite{Homdim}). The property of type $\fpinfty$ is
inherited by supergroups of finite index, so $G$ is also of type
$\fpinfty$. Therefore, by Proposition \ref{brown prop}, we see
that $G$ has cohomology almost everywhere finitary.

For the converse, we shall prove a more general result which does
not place any restrictions on the group $N$. Firstly, we need the
following three lemmas:

\begin{lemma}\label{finite index}
  Let $G$ be a group, and $H$ be a subgroup of finite index. If
  $H^n(G,-)$ is finitary, then $H^n(H,-)$ is also finitary.
\end{lemma}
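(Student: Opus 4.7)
The plan is to deduce the lemma by pulling cohomology of $H$ back to cohomology of $G$ via Shapiro's lemma, and then using that the induction/coinduction functor commutes with filtered colimits when the subgroup has finite index.

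First I would recall Shapiro's lemma: for every $\Z H$-module $M$ there is a natural isomorphism
$$H^n(H,M)\iso H^n(G,\coind{H}{G}{M}).$$
Because $[G:H]$ is finite, $\Z G$ is finitely generated free as a right $\Z H$-module, so coinduction $\coind{H}{G}{-}=\Hom_{\Z H}(\Z G,-)$ agrees naturally with induction $\ind{H}{G}{-}=\Z G\otimes_{\Z H}-$. Induction is a left adjoint (to restriction) and hence preserves all colimits; in particular, for any filtered system $\{M_i\}$ of $\Z H$-modules,
$$\coind{H}{G}{\colim_i M_i}\iso\ind{H}{G}{\colim_i M_i}\iso\colim_i\ind{H}{G}{M_i}\iso\colim_i\coind{H}{G}{M_i}.$$

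Combining this with the naturality of Shapiro's isomorphism and the hypothesis that $H^n(G,-)$ commutes with filtered colimits yields
$$H^n(H,\colim_i M_i)\iso H^n(G,\colim_i\coind{H}{G}{M_i})\iso\colim_i H^n(G,\coind{H}{G}{M_i})\iso\colim_i H^n(H,M_i),$$
which shows that $H^n(H,-)$ is finitary. There is no real obstacle here; the argument simply assembles Shapiro's lemma, the coincidence of induction and coinduction for finite-index inclusions, and the colimit-preservation of a left adjoint. I would expect this lemma to be used throughout the paper as the trivial-half of the statement that passing to a subgroup of finite index preserves almost-everywhere finitariness.
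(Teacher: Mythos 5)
Your argument is exactly the paper's: Shapiro's lemma, the identification of coinduction with induction for finite-index subgroups, and the fact that induction (being a tensor functor, i.e.\ a left adjoint) commutes with filtered colimits. The only cosmetic difference is that you justify colimit-preservation via the left-adjoint property while the paper cites directly that tensor products commute with filtered colimits.
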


\begin{proof}
  Suppose that $H^n(G,-)$ is finitary. From Shapiro's Lemma
  (Proposition $6.2$ \S III in \cite{Brown}), we have:
  $$H^n(H,-)\cong H^n(G,\COIND^G_H-).$$ Then, as $H$ has finite
  index in $G$, it follows from Lemma $6.3.4$ in \cite{Wiebel}
  that $\COIND^G_H(-)\cong\IND^G_H(-)$. Therefore, $$H^n(H,-)\cong
  H^n(G,\IND^G_H-)\cong H^n(G,-\otimes_{\Z H}\Z G),$$ and as
  tensor products commute with filtered colimits, we see that $H^n(H,-)$ is
  the composite of two finitary functors, and hence is itself
  finitary.
\end{proof}

\begin{lemma}\label{change of rings lemma}
  Let $G$ be a group, and $R_1\ra R_2$ be a ring homomorphism. If
  $H^n(G,-)$ is finitary over $R_1$, then $H^n(G,-)$ is finitary
  over $R_2$.
\end{lemma}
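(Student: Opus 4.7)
The plan is to reduce the claim for $R_2$ to the hypothesis over $R_1$ by establishing a natural identification between the two Ext-functors after restriction of scalars. Specifically, I will show that for any $R_2G$-module $M$, the group cohomology $\EXT^n_{R_2G}(R_2,M)$ and $\EXT^n_{R_1G}(R_1,M)$ (where $M$ is viewed as an $R_1G$-module via the ring homomorphism) are naturally isomorphic. Once this is in place, the conclusion is almost formal: a filtered colimit system of $R_2G$-modules is also a filtered colimit system of $R_1G$-modules with the same colimit on underlying abelian groups, so finitariness over $R_1$ transports to finitariness over $R_2$.

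To produce the identification, I would fix a free $\Z G$-resolution $F_*\epi\Z$ of the trivial module and, for $i=1,2$, observe that $R_i\otimes_\Z F_*\epi R_i$ is a free $R_iG$-resolution of the trivial $R_iG$-module $R_i$ (exactness is preserved because each $F_k$ is $\Z$-free, hence $\Z$-flat). The standard extension-restriction adjunction along $\Z G\hookrightarrow R_iG$ then supplies, for each $R_iG$-module $M$, a natural isomorphism of cochain complexes
$$\Hom_{R_iG}(R_i\otimes_\Z F_*,M)\iso\Hom_{\Z G}(F_*,M),$$
and taking cohomology yields natural isomorphisms $\EXT^n_{R_iG}(R_i,M)\iso H^n(G,M)$, where the right-hand side is ordinary integral group cohomology. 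Specializing to $i=1$ and $i=2$ and composing gives $\EXT^n_{R_2G}(R_2,M)\iso\EXT^n_{R_1G}(R_1,M)$, natural in the $R_2G$-module $M$.

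To finish, given a filtered system $\{M_\lambda\}$ of $R_2G$-modules with colimit $M$, the forgetful functor $R_2G\text{-Mod}\ra R_1G\text{-Mod}$ preserves colimits (they are computed on the underlying abelian groups), so $M$ is also the colimit of $\{M_\lambda\}$ in $R_1G\text{-Mod}$. Combining this with naturality of the isomorphism above and the hypothesis on $R_1$ gives
$$\EXT^n_{R_2G}(R_2,\colim_\lambda M_\lambda)\iso\EXT^n_{R_1G}(R_1,\colim_\lambda M_\lambda)\iso\colim_\lambda\EXT^n_{R_1G}(R_1,M_\lambda)\iso\colim_\lambda\EXT^n_{R_2G}(R_2,M_\lambda).$$
There is no real obstacle here; the argument is simply a careful invocation of a change-of-rings identification. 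The only point that needs attention is naturality in $M$, which is transparent from the explicit description via $\Hom_{\Z G}(F_*,M)$.
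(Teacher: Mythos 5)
Your proof is correct and follows the same route as the paper: the paper simply cites the change-of-rings isomorphism $\EXT^n_{R_2G}(R_2,M)\cong\EXT^n_{R_1G}(R_1,M)$ from Chapter~0 of Bieri's notes and concludes, whereas you derive that isomorphism from an explicit resolution and the extension/restriction adjunction, and then spell out the (correct) observation that the forgetful functor preserves filtered colimits. One small point of rigor: the parenthetical justification that exactness of $R_i\otimes_\Z F_*$ is preserved ``because each $F_k$ is $\Z$-flat'' is not by itself sufficient --- flatness of the terms of a complex does not make tensoring exact; what you actually need is that the syzygies of $F_*\to\Z$ are $\Z$-flat, which holds here because $\Z$ is a PID (so $\Z$-submodules of free modules are free), or equivalently because the augmented complex admits a $\Z$-linear contracting homotopy.
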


\begin{proof}
  We see from Chapter $0$ of \cite{Homdim} that for any
  $R_2G$-module $M$ we have the following isomorphism:
  $$\EXT^n_{R_2G}(R_2,M)\cong\EXT^n_{R_1G}(R_1,M),$$ where $M$ is
  viewed as an $R_1G$-module via the homomorphism $R_1\ra R_2$.
  The result now follows.
\end{proof}

\begin{lemma}\label{direct sum fin implies each term fin}
  Let $F_1,F_2:\Mod_R\ra\Mod_S$, and suppose that $F$ is the direct sum of $F_1$ and $F_2$. If $F$ is
  finitary, then so are $F_1$ and $F_2$.
\end{lemma}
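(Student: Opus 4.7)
The plan is to exploit the fact that direct sums commute with filtered colimits in any cocomplete category of modules, so that the natural transformation measuring whether $F$ commutes with a colimit decomposes as a direct sum of the analogous transformations for $F_1$ and $F_2$.

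Concretely, given a filtered colimit system $\{M_i\}$ in $\Mod_R$, I would write down the canonical comparison map
$$\phi_F : \colim_i F(M_i) \longrightarrow F(\colim_i M_i),$$
and the analogous maps $\phi_{F_1}$ and $\phi_{F_2}$. Since $F = F_1 \oplus F_2$ by hypothesis, evaluating on each $M_i$ and taking the colimit gives
$$\colim_i F(M_i) = \colim_i \bigl(F_1(M_i) \oplus F_2(M_i)\bigr) \cong \colim_i F_1(M_i) \oplus \colim_i F_2(M_i),$$
using that filtered colimits commute with finite direct sums in $\Mod_S$. On the other hand, $F(\colim_i M_i) = F_1(\colim_i M_i) \oplus F_2(\colim_i M_i)$ by definition of $F$. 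Under these identifications, naturality forces $\phi_F = \phi_{F_1} \oplus \phi_{F_2}$.

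The conclusion is then immediate: $F$ finitary means $\phi_F$ is an isomorphism, and a direct sum of two $S$-module homomorphisms is an isomorphism if and only if each summand is an isomorphism (apply the projections). Hence both $\phi_{F_1}$ and $\phi_{F_2}$ are isomorphisms, so $F_1$ and $F_2$ are finitary.

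There is no serious obstacle here; the only substantive point is the commutation of filtered colimits with finite direct sums in $\Mod_S$, which is standard. The proof is essentially a one-paragraph verification.
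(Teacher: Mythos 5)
Your proof is correct, and it takes a cleaner route than the paper. The paper exploits the two exact sequences of functors $0 \to F_1 \to F \to F_2 \to 0$ and $0 \to F_2 \to F \to F_1 \to 0$ coming from the direct-sum splitting: after passing to filtered colimits (which preserve exactness of the top row) it applies the Snake Lemma to the resulting commutative ladder to deduce first that $f_1$ is mono and $f_2$ epi, and then, from the reversed sequence, that $f_2$ is mono and $f_1$ epi. Your argument skips the exact-sequence machinery entirely: you observe that under the canonical identification $\colim_i\bigl(F_1(M_i)\oplus F_2(M_i)\bigr) \cong \colim_i F_1(M_i)\oplus\colim_i F_2(M_i)$, naturality forces the comparison map to split as $\phi_F = \phi_{F_1}\oplus\phi_{F_2}$, and a direct sum of module maps is an isomorphism precisely when each summand is. This is more transparent and avoids two invocations of the Snake Lemma; the paper's approach has the minor pedagogical advantage of reusing the exact-sequence ladder that it also needs for the companion Lemma \ref{fin functors implies direct sum fin}, but on its own your argument is the more economical one. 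The only thing worth making explicit in a written-up version is the naturality check that $\phi_F$ really does respect the direct-sum decomposition on both source and target, which you correctly flag and which is routine.
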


\begin{proof}
  As $F$ is the direct sum of $F_1$ and $F_2$, we have the
  following exact sequence of functors: $$0\ra F_1\ra F\ra F_2\ra
  0.$$ Let $(M_{\lambda})$ be a filtered colimit system of
  $R$-modules. We have the following commutative diagram with
  exact rows: $$\xymatrix{\colim_{\lambda}
  F_1(M_{\lambda})\ar@{>->}[r]\ar[d]_{f_1} & \colim_{\lambda}
  F(M_{\lambda})\ar@{->>}[r]\ar[d]^{f} & \colim_{\lambda}
  F_2(M_{\lambda})\ar[d]^{f_2} \\ F_1(\colim_{\lambda}
  M_{\lambda})\ar@{>->}[r] & F(\colim_{\lambda}
  M_{\lambda})\ar@{->>}[r] & F_2(\colim_{\lambda} M_{\lambda})}$$ As $F$ is finitary, we see that the map $f$ is an isomorphism.
  It then follows from the Snake Lemma that $f_1$ is a
  monomorphism and $f_2$ is an epimorphism.

  Now, as $F$ is the direct sum of $F_1$ and $F_2$, we also have the
  following exact sequence of functors: $$0\ra F_2\ra F\ra F_1\ra
  0,$$ and hence the following commutative diagram with exact
  rows: $$\xymatrix{\colim_{\lambda}
  F_2(M_{\lambda})\ar@{>->}[r]\ar[d]_{f_2} & \colim_{\lambda}
  F(M_{\lambda})\ar@{->>}[r]\ar[d]^{f} & \colim_{\lambda}
  F_1(M_{\lambda})\ar[d]^{f_1} \\ F_2(\colim_{\lambda}
  M_{\lambda})\ar@{>->}[r] & F(\colim_{\lambda}
  M_{\lambda})\ar@{->>}[r] & F_1(\colim_{\lambda} M_{\lambda})}$$
  and a similar argument to above shows that $f_2$ is a monomorphism and $f_1$ is an
  epimorphism. The result now follows.
\end{proof}

\begin{prop}\label{direct product prop}
  Let $Q$ be a non-trivial finite group, and $N$ be any group. If there is some natural number $k$ such that $H^k(N\times Q,-)$
  is finitary, then $N$ is finitely generated.
\end{prop}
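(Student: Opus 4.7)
The plan is to reduce to a cyclic subgroup of prime order and then apply the Künneth formula to pull out finitariness of $H^1(N,-)$. First I would fix a prime $p$ dividing $|Q|$ (possible since $Q$ is nontrivial) together with a subgroup $E\leq Q$ of order $p$. Then $N\times E$ has finite index in $N\times Q$, so Lemma \ref{finite index} gives that $H^k(N\times E,-)$ is finitary over $\Z$; applying Lemma \ref{change of rings lemma} to the ring map $\Z\to\F$ then shows that $H^k(N\times E,-)$ is also finitary when regarded as a functor on $\F(N\times E)$-modules.

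Next, for any $\F N$-module $M$, regarded as an $\F(N\times E)$-module via the projection $N\times E\epi N$ so that $E$ acts trivially, the Künneth theorem over the field $\F$ gives a natural isomorphism
$$H^k(N\times E,M)\cong\bigoplus_{i+j=k}H^i(N,M)\otimes_{\F}H^j(E,\F).$$
Because $E\cong C_p$ satisfies $\dim_{\F}H^j(E,\F)=1$ for every $j\geq 0$, this collapses to
$$H^k(N\times E,M)\cong\bigoplus_{i=0}^{k}H^i(N,M),$$
naturally in $M$. The full subcategory of $\F(N\times E)$-modules on which $E$ acts trivially is closed under filtered colimits and is equivalent via inflation to the category of $\F N$-modules, so finitariness of $H^k(N\times E,-)$ restricts to finitariness of $\bigoplus_{i=0}^{k}H^i(N,-)$ as a functor on $\F N$-modules. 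Iterating Lemma \ref{direct sum fin implies each term fin} then forces each $H^i(N,-)$ to be finitary over $\F$ for $0\leq i\leq k$.

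In particular, $H^1(N,-)$ is finitary over $\F$ (the hypothesis is vacuous when $k=0$, so one may assume $k\geq 1$), and the standard Bieri criterion from \cite{Homdim} then implies that $N$ is of type $\FP_1$ over $\F$, which is well known to be equivalent to $N$ being finitely generated. The main obstacle I expect to encounter is justifying the Künneth decomposition together with its naturality in $M$; I would settle this by constructing a tensor-product resolution $P_\bullet\otimes_{\F}Q_\bullet\epi\F$ of $\F$ over $\F(N\times E)$ from a free resolution $P_\bullet\epi\F$ over $\F N$ and the standard periodic free resolution $Q_\bullet\epi\F$ over $\F E$, and then observing that the trivial $E$-action on $M$ causes the double complex $\Hom_{\F(N\times E)}(P_\bullet\otimes_{\F}Q_\bullet,M)$ to split as $\Hom_{\F N}(P_\bullet,M)\otimes_{\F}\Hom_{\F E}(Q_\bullet,\F)$, so that the associated Künneth spectral sequence degenerates at $E_2$.
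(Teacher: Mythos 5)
Your argument follows the paper's proof almost verbatim: pass to a prime-order subgroup $E\leq Q$, use Lemma \ref{finite index} and Lemma \ref{change of rings lemma} to reduce to $H^k(N\times E,-)$ finitary over $\F$, invoke the K\"unneth theorem with trivial $E$-action to split off summands, and apply Lemma \ref{direct sum fin implies each term fin}. The only difference is that the paper extracts the $i=0$ summand and concludes directly that $H^0(N,-)$ finitary over $\F$ forces $N$ to be finitely generated (Prop.\ 2.1 in \cite{Homdim}), whereas you extract $H^1(N,-)$ and route through $\FP_1$; your aside that ``the hypothesis is vacuous when $k=0$'' is not correct --- $H^0(N\times Q,-)$ finitary is a genuine condition, equivalent to $N\times Q$ (hence $N$) being finitely generated --- but this is harmless since you could equally well have used the $i=0$ summand, as the paper does.
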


\begin{proof}
  Suppose that $H^k(N\times Q,-)$ is finitary. As $Q$ is a non-trivial finite group,
  we can choose a subgroup $E$ of $Q$ of order $p$, for some prime $p$, so
  $N\times E$ is a subgroup of $N\times Q$ of finite index. It
  then follows from Lemma \ref{finite index} that $H^k(N\times E,-)$ is also
  finitary. Then, by Lemma \ref{change of rings lemma}, we see that $H^k(N\times E,-)$ is finitary over $\F$.

  Let $M$ be any $\F N$-module, and $\F$ be the trivial $\F
  E$-module. Applying the K\"{u}nneth Theorem gives the
following isomorphism: $$\begin{array}{lcl} H^k(N\times E,M) &
\cong & \bigoplus_{i+j=k} H^i(N,M)\otimes_{\F} H^j(E,\F)\\
\blah & \cong & \bigoplus_{i=0}^k H^i(N,M),\end{array}$$ and as
this holds for any $\F N$-module $M$, we have an isomorphism of
functors for modules on which $E$ acts trivially. Then, as
$H^k(N\times E,-)$ is finitary over $\F$, it follows from Lemma
\ref{direct sum fin implies each term fin} that $H^0(N,-)$ is also
finitary over $\F$. It then follows that $N$ is finitely generated
(see, for example, Proposition $2.1$ in \cite{Homdim}).

\end{proof}

The converse of the direct product case now follows immediately.

\section{Proof of Theorem A}\label{sec:Thm A}

Let $G$ be a locally (polycyclic-by-finite) group of finite
virtual cohomological dimension. We begin with the following
useful result of Cornick and Kropholler (Theorem A in
\cite{stronggpgradedring}):

\begin{prop}\label{finite projective dimension}

Let $G$ be a group possessing a finite dimensional model for
$\eg$, and $M$ be an $RG$-module. Then $M$ has finite projective
dimension over $RG$ if and only if $M$ has finite projective
dimension over $RH$ for all finite subgroups $H$ of $G$.

\end{prop}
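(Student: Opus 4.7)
The forward implication is immediate: if $0 \to P_d \to \cdots \to P_0 \to M \to 0$ is a finite projective resolution over $RG$, then restricting scalars along $RH \hookrightarrow RG$ leaves it exact of the same length, and each term remains projective because $RG$ is free as a right $RH$-module whenever $H \leq G$. So the real content of the proposition is the converse.

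My plan for the converse is to exploit the cellular chain complex of a finite dimensional model $X$ for $\eg$. Because every isotropy group of $X$ is finite and $X$ is contractible, we obtain an exact sequence of $RG$-modules
$$0 \to C_d \to C_{d-1} \to \cdots \to C_0 \to R \to 0,$$
where $d = \dim X$ and each $C_i$ is a direct sum of permutation modules $R[G/F]$ with $F$ a finite subgroup of $G$. Tensoring this sequence over $R$ with $M$ (giving $R[G/F] \otimes_R M$ the diagonal $G$-action) and using the natural isomorphism $R[G/F] \otimes_R M \cong \IND^G_F \RES^G_F M$ yields a resolution of $M$ of length $d$ whose terms are direct sums of $RG$-modules of the form $\IND^G_F \RES^G_F M$ with $F$ finite.

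To finish, I want to bound $\pd_{RG}(\IND^G_F \RES^G_F M)$ uniformly in $F$. Since $RG$ is free over $RF$, induction is exact and sends $RF$-projectives to $RG$-projectives, so $\pd_{RG}(\IND^G_F \RES^G_F M) \leq \pd_{RF}(\RES^G_F M)$, and the right-hand side is finite by hypothesis. The main obstacle is that a priori these projective dimensions could grow with $|F|$. The standard way around this is to invoke the classical fact that an $RF$-module of finite projective dimension over a finite group $F$ is cohomologically trivial and hence has projective dimension bounded by a constant depending only on $R$ (for $R = \Z$ the bound is $1$, by a theorem of Rim). With such a uniform bound $N$ in hand, a dimension-shifting argument that splices finite projective resolutions of the modules $\IND^G_F \RES^G_F M$ onto the $d$-step resolution of $M$ gives $\pd_{RG} M \leq N + d < \infty$, completing the proof.
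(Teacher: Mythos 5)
The paper itself gives no proof: it simply cites Theorem A of Cornick--Kropholler (the reference \texttt{stronggpgradedring}), so there is no in-house argument against which to compare yours. That said, your outline is a legitimate and essentially standard geometric proof of the Cornick--Kropholler result. The forward direction (restrict a finite projective resolution, using that $RG$ is free over $RH$) is fine, and for the converse the strategy of tensoring the augmented cellular chain complex of a finite-dimensional $\eg$ with $M$ to get a length-$d$ resolution of $M$ by modules of the form $\IND^G_F\RES^G_F M$, and then bounding the projective dimension of each piece, is exactly the right idea. You also correctly identify the crux: one needs a \emph{uniform} bound on $\pd_{RF}(\RES^G_F M)$ as $F$ ranges over the (possibly infinitely many conjugacy classes of) finite cell stabilizers.

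The weak point is the justification of that uniform bound. You assert that an $RF$-module of finite projective dimension over a finite group $F$ has projective dimension bounded by a constant depending only on $R$. That is false in general: taking $F$ trivial already shows the ``constant'' would have to dominate the finitistic dimension of $R$, which can be infinite. The fact you actually need is the following: for a finite group $F$ and an $RF$-module $N$, if $\pd_{RF}(N)<\infty$ then $\pd_{RF}(N)=\pd_R(N)$. (The content is the case where $N$ is $R$-projective: an $R$-projective $RF$-module of finite projective dimension is $RF$-projective --- this follows, e.g., from the vanishing of Tate cohomology on modules of finite projective dimension, or from $RF$ being a Frobenius $R$-algebra.) With that corrected statement, the bound you want is $\pd_R(M)$, and this is finite precisely by the $H=\{1\}$ instance of the hypothesis of the proposition --- a point you should make explicit, since your sketch never observes that the trivial subgroup is among the finite subgroups. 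Your citation of Rim's theorem is correct for $R=\Z$ (bound $1$), but the way you phrase the general claim overreaches; once the dependence on $\pd_R(M)$ rather than on $R$ alone is made explicit, the splicing argument gives $\pd_{RG}M\leq \pd_R(M)+d$ and the proof is complete.
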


\begin{theorem}\label{split infinitary result}

Let $G$ be a locally (polycyclic-by-finite) group of finite
virtual cohomological dimension. If $G$ has cohomology almost
everywhere finitary, then the normalizer of every non-trivial
finite subgroup of $G$ is finitely generated.

\end{theorem}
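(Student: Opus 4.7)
My plan is to show that for every non-trivial finite subgroup $F$ of $G$, the normalizer $H := N_G(F)$ is finitely generated. Fix a torsion-free normal subgroup $N$ of $G$ of finite index (available since $G$ has finite vcd); note $\cd N<\infty$. Inside $H$, consider $K := C_N(F)\cdot F$. Because $N$ is torsion-free, $C_N(F)\cap F = 1$, and since $C_N(F)$ centralizes $F$ the product is direct, so $K = C_N(F)\times F$. The index $|H : N_N(F)|$ is bounded by $|G/N|$, the index $|N_N(F) : C_N(F)|$ is bounded by $|\Aut(F)|$, and $|K : C_N(F)| = |F|$, so $|H:K|$ is finite; consequently $H$ is finitely generated if and only if $C_N(F)$ is.

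By Proposition \ref{direct product prop}, to establish that $C_N(F)$ is finitely generated it suffices to exhibit a single $k$ such that $H^k(K,-)$ is finitary. By Lemma \ref{finite index}, this in turn reduces to finding some $k$ for which $H^k(H,-)$ is finitary. The remaining task is therefore to transfer the hypothesized almost-everywhere finitariness of $H^*(G,-)$ from $G$ to its (generally infinite-index) subgroup $H$.

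For this transfer the plan is to use the geometric input provided by finite vcd: in view of Lemma \ref{finite vcd lemma}, $G$ has a finite-dimensional model $X := \eg$, and the $F$-fixed-point subcomplex $X^F$ is a finite-dimensional model for the classifying space for proper actions of $H$. I would compare the equivariant cohomology spectral sequences associated to the actions of $G$ on $X$ and $H$ on $X^F$. The $E_1$-terms of both spectral sequences are products of expressions of the form $H^q(J,-)$ for $J$ a finite subgroup, and each such functor is automatically finitary since finite groups are $\fpinfty$. Using the bound on orders of finite subgroups from Lemma \ref{finite vcd lemma} and invoking Proposition \ref{finite projective dimension} to restrict attention to coefficient modules of finite projective dimension over $\Z G$, the almost-everywhere finitariness of $H^*(G,-)$ should propagate through the comparison to force $H^k(H,-)$ to be finitary for all sufficiently large $k$.

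The main obstacle is the possible non-cocompactness of the $G$-action on $X$ and of the $H$-action on $X^F$: the $E_1$-terms of the spectral sequences then involve infinite products, which do not commute with filtered colimits. The delicate step will be to combine the bound on the orders of finite subgroups with the given finitariness of $H^*(G,-)$ in order to control this failure of commutativity; once this is done, $H^k(H,-)$ is finitary for large $k$, and Proposition \ref{direct product prop} delivers the conclusion that $C_N(F)$, hence $H$, is finitely generated.
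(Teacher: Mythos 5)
Your setup is sound: reducing finite generation of $N_G(F)$ to that of $C_N(F)$, and then (via Lemma \ref{finite index} and Proposition \ref{direct product prop}) to finding a single $k$ with $H^k(N_G(F),-)$ finitary, is a valid reduction. But the crux of the theorem is exactly the step you defer to the end and never actually carry out: transferring almost-everywhere finitariness from $G$ to the subgroup $H := N_G(F)$, which in general has infinite index. You propose to do this by comparing the isotropy spectral sequences for $G$ acting on $X = \eg$ and $H$ acting on $X^F$, and you correctly identify the obstacle — the $E_1$-pages involve potentially infinite products over cell orbits, and infinite products of finitary functors need not be finitary — but the sentence ``the delicate step will be to combine the bound on the orders of finite subgroups with the given finitariness\dots\ once this is done'' is a placeholder, not an argument. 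As written there is a genuine gap: you have not shown how finitariness of $H^*(G,-)$ propagates through the spectral sequence comparison, and it is not clear that it does by this route.

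The paper avoids the infinite-index problem entirely by a different choice of auxiliary subgroup. Rather than passing to $N_G(F)$, it picks $E\leq F$ of order $p$ and sets $H := NE$, which has \emph{finite} index in $G$, so $H$ inherits cohomology almost everywhere finitary immediately from Lemma \ref{finite index}. For this particular $H$ every non-trivial finite subgroup has order $p$, so the poset $\Lambda$ of non-trivial finite subgroups is just a $G$-set, and the short exact sequence $J\mono\Z\Lambda\epi\Z$ (with $J$ free over each $\Z K$, $K\in\Lambda$, hence of finite projective dimension over $\Z H$ by Proposition \ref{finite projective dimension}) yields, in high degrees, $H^n(H,-)\cong\prod_{K\in\mathscr C}H^n(N_H(K),-)$. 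From this product one does not need to control all factors: Lemma \ref{direct sum fin implies each term fin} extracts the single direct summand $H^n(N_H(E),-)$, which is finitary, whence $C_H(E)\cong E\times C_N(E)$ has cohomology almost everywhere finitary, $C_N(E)$ is finitely generated (Proposition \ref{direct product prop}) and hence polycyclic-by-finite, and finally $C_N(F)\leq C_N(E)$ is finitely generated because all subgroups of a polycyclic-by-finite group are. Your plan and the paper's proof therefore agree on the ``back end'' (Proposition \ref{direct product prop} applied to a direct-product-like centralizer), but the paper's choice of a finite-index $H=NE$ with a $0$-dimensional finite-subgroup poset is precisely what makes the transfer of finitariness trivial, and that is the idea missing from your argument. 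If you want to salvage your approach, you would need to either prove the spectral-sequence transfer (which seems hard for the reason you flag), or notice, as the paper does, that it suffices to work inside a finite-index overgroup of $E$ rather than inside the full normalizer of $F$.
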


\begin{proof}

Let $F$ be a non-trivial finite subgroup of $G$, so we can choose
a subgroup $E$ of $F$ of order $p$, for some prime $p$. As $G$ has
finite virtual cohomological dimension, it has a torsion-free
normal subgroup $N$ of finite index. Let $H:=NE$, so it follows
from Lemma \ref{finite index} that $H$ has cohomology almost
everywhere finitary.

Let $\Lambda$ denote the set of non-trivial finite subgroups of
$H$, so $\Lambda$ consists of subgroups of order $p$. Now $H$ acts
on this set by conjugation, so the stabilizer of any $K\in\Lambda$
is $N_H(K)$. Also, for each $K\in\Lambda$, we see that the set of
$K$-fixed points $\Lambda^K$ is simply the set $\{K\}$, because if
$K$ fixed some $K'\neq K$, then $KK'$ would be a subgroup of $H$
of order $p^2$, which is a contradiction.

We have the following short exact sequence:
$$J\mono\Z\Lambda\stackrel{\varepsilon}{\epi}\Z,$$ where
$\varepsilon$ denotes the augmentation map. For each
$K\in\Lambda$, we see that $J$ is free as a $\Z K$-module with
basis $\{K'-K:K'\in\Lambda\}$. Now, as $H$ has finite virtual
cohomological dimension, it has a finite dimensional model for
$\eh$ (Exercise \S VIII.$3$ in \cite{Brown}), and so it follows
from Proposition \ref{finite projective dimension} that $J$ has
finite projective dimension over $\Z H$. Now, the short exact
sequence $J\mono\Z\Lambda\epi\Z$ gives rise to a long exact
sequence in cohomology, and as $J$ has finite projective
dimension, we conclude that for all sufficiently large $n$ we have
the following isomorphism:
$$H^n(H,-)\cong\EXT^n_{\Z H}(\Z\Lambda,-).$$ Next, as $H$ acts on
$\Lambda$, we can split $\Lambda$ up into its $H$-orbits, so
$$\Lambda =\coprod_{K\in\mathscr{C}}H_K\backslash
H=\coprod_{K\in\mathscr{C}} N_H(K)\backslash H,$$ where $K$ runs
through a set $\mathscr{C}$ of representatives of conjugacy
classes of non-trivial finite subgroups of $H$. This gives the
following isomorphism:
$$\begin{array}{rcl}
  H^n(H,-) & \cong & \prod_{K\in\mathscr{C}}\EXT^n_{\Z H}(\Z[N_H(K)\backslash H],-) \\
  \blah & \cong & \prod_{K\in\mathscr{C}} H^n(N_H(K),-), \\
\end{array}$$ where the last isomorphism follows from the Eckmann--Shapiro
Lemma. Therefore, if $H^n(H,-)$ is finitary, it follows from Lemma
\ref{direct sum fin implies each term fin} that $H^n(N_H(E),-)$ is
also finitary. Hence, as $H$ has cohomology almost everywhere
finitary, we conclude that $N_H(E)$ also has cohomology almost
everywhere finitary.

Now, as $E$ is a finite group,
$$|N_H(E):C_H(E)|<\infty,$$ and so by Lemma \ref{finite index}
we see that $$C_H(E)\cong E\times C_N(E)$$ has cohomology almost
everywhere finitary. It then follows from Proposition \ref{direct
product prop} that $C_N(E)$ is finitely generated, and hence
polycyclic-by-finite.

Now, as $E\leq F$, it follows that $C_N(F)\leq C_N(E)$ and as
every subgroup of a polycyclic-by-finite group is finitely
generated, we see that $C_N(F)$ is finitely generated.

Finally, as $N$ is a subgroup of $G$ of finite index, it follows
that
$$|C_G(F):C_N(F)|<\infty,$$ and so $C_G(F)$ is
finitely generated. Hence $N_G(F)$ is finitely generated, as
required.

\end{proof}

In the remainder of this section we shall prove the converse.
Firstly, we need the following definition from \cite{Deltapaper}:

\begin{definition}\label{definition of
delta}

Let $G$ be a group, and let $\Lambda(G)$ denote the poset of the
non-trivial finite subgroups of $G$. We can view this poset as a
$G$-simplicial complex, which we shall denote by $\poset$, by the
following method: An $n$-simplex in $|\Lambda(G)|$ is determined
by each strictly increasing chain
$$H_0< H_1<\cdots <H_n$$ of $n+1$ non-trivial finite subgroups of $G$.
The action of $G$ on the set of non-trivial finite subgroups
induces an action of $G$ on $|\Lambda(G)|$, so that the stabilizer
of a simplex is an intersection of normalizers; in the case of the
simplex determined by the chain of subgroups above, the stabilizer
is
$$\bigcap_{i=0}^n N_G(H_i).$$ This complex has the property that,
for any non-trivial finite subgroup $K$ of $G$, the $K$-fixed
point complex $\poset^K$ is contractible (for a proof of this, see
Lemma $2.1$ in \cite{Deltapaper}).
\end{definition}

Next, we need the following two results of Kropholler and Mislin:

\begin{prop}\label{n-1 connected Y}

Let $Y$ be a $G$-CW-complex of finite dimension $n$. Then $Y$ can
be embedded into an $n$-dimensional $G$-CW-complex $\widetilde{Y}$
which is $(n-1)$-connected in such a way that $G$ acts freely
outside $Y$.

\end{prop}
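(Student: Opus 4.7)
The plan is to build $\widetilde Y$ from $Y$ by successively attaching free $G$-orbits of cells of dimensions $1, 2, \ldots, n$, in order to kill the homotopy groups $\pi_0, \pi_1, \ldots, \pi_{n-1}$ of $Y$ one at a time. Concretely, I would construct inductively a chain
\[ Y = Y_{-1} \subseteq Y_0 \subseteq Y_1 \subseteq \cdots \subseteq Y_{n-1} = \widetilde Y \]
of $G$-CW-complexes in which $Y_k$ is $k$-connected and is obtained from $Y_{k-1}$ by attaching free $G$-orbits of $(k+1)$-cells. Since only cells of dimensions between $1$ and $n$ are ever added, the resulting $\widetilde Y$ has dimension $n$, and the freeness of each attached orbit ensures that $G$ acts freely on $\widetilde Y \setminus Y$.

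For the inductive step at level $k \geq 1$, suppose $Y_{k-1}$ is $(k-1)$-connected. I would choose a family of maps $\{\phi_\lambda \colon S^k \to Y_{k-1}\}_{\lambda \in \Lambda}$ whose $G$-translates generate $\pi_k(Y_{k-1})$, and for each $\lambda$ attach a free $G$-orbit $G \times e^{k+1}$ along the equivariant attaching map $G \times S^k \to Y_{k-1}$, $(g,x) \mapsto g \cdot \phi_\lambda(x)$. This kills every element of $\pi_k(Y_{k-1})$ without disturbing the lower homotopy groups (the added cells have dimension $\geq 2$), so $Y_k$ is $k$-connected, and $G$ acts freely on each attached orbit by construction. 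The base case $k = 0$ is done analogously, attaching free $G$-orbits of $1$-cells to join together representatives of the distinct $G$-orbits of path components of $Y$, so that $Y_0$ becomes path-connected.

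I expect the main obstacle to be the $k = 0$ step, where one must ensure that no attached $1$-cell is reversed by a nontrivial element of $G$. This is handled by arranging that the two endpoints of each new $1$-cell lie in distinct $G$-orbits of $0$-cells, first inserting auxiliary free $G$-orbits of vertices if necessary. Apart from this, the argument is simply the equivariant version of the standard procedure for killing homotopy groups in a CW-complex, performed in parallel on each $G$-orbit of attaching maps; the only other point to check is the existence of a $G$-equivariant generating set for $\pi_k(Y_{k-1})$, which is no obstruction since one may in the worst case take the entirety of $\pi_k(Y_{k-1})$ as the indexing set $\Lambda$.
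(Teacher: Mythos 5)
Your proof takes a genuinely different route from the paper's. The paper quotes Lemma 4.4 of Kropholler and Mislin, whose proof is a single explicit construction: take $\widetilde Y$ to be the $n$-skeleton of the $n$-fold join $Y \ast G \ast \cdots \ast G$. Each join with the nonempty discrete $G$-set $G$ raises connectivity by one, $G$ acts freely on the join away from $Y$, and passing to the $n$-skeleton fixes the dimension without affecting $\pi_k$ for $k<n$ — all in one formula, with no choices of generating sets or basepoints. Your iterative equivariant cell-killing is a fair alternative, and the inductive step for $k\geq 1$ is fine, but two remarks are in order.

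First, the worry about $1$-cells being "reversed" by nontrivial elements of $G$ is a red herring: you are attaching a free orbit $G\times e^{1}$, and $g$ carries the cell indexed by $h$ to the cell indexed by $gh$, so no $g\neq 1$ stabilizes any added cell regardless of where its endpoints sit — the auxiliary vertices are unnecessary. Second, and more substantively, your $k=0$ step as written has a gap: joining representatives of distinct $G$-orbits of path components need not make $Y_0$ path-connected when a single $G$-orbit contains more than one path component. For instance, if $Y$ is two points swapped by $G=\Z/2$, there is only one orbit of components, your recipe attaches no $1$-cells, and $Y_0=Y$ is still disconnected. The repair is to fix a basepoint $y_0$ and, for every path component $C$ with $y_0\notin C$, attach the free $G$-orbit of a $1$-cell from $y_0$ to a chosen $0$-cell $y_C\in C$; one then checks that every translate $gy_0$ is joined to $y_0$ through some component, so $Y_0$ is path-connected. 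With that repair your construction is correct, and the dimension count works because only cells of dimensions $1$ through $n$ are ever added.
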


\begin{proof}
  This is Lemma $4.4$ of \cite{Deltapaper}. We can take
  $\widetilde{Y}$ to be the $n$-skeleton of the join
  $$Y\ast\underbrace{G\ast\cdots\ast G}_n.$$
\end{proof}

\begin{prop}\label{why is reduced homology projective}

Let $Y$ be an $n$-dimensional $G$-CW-complex which is
$(n-1)$-connected, for some $n\geq 0$. Suppose that $Y^K$ is
contractible for all non-trivial finite subgroups $K$ of $G$. Then
the $n$th reduced homology group $\widetilde{H_n}(Y)$ is
projective as a $\Z K$-module for all finite subgroups $K$ of $G$.

\end{prop}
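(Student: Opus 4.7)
The plan is to use the $(n-1)$-connectedness of $Y$ to produce a short resolution of $\widetilde{H}_n(Y)$, and then to invoke Rim's detection theorem to reduce $\Z K$-projectivity to the case where $L\leq K$ has prime order. In that reduction the geometry simplifies dramatically, because such an $L$ has no non-trivial proper subgroup.

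Since $Y$ is $n$-dimensional and $(n-1)$-connected, the augmented cellular chain complex yields an exact sequence of $\Z G$-modules
$$0\to\widetilde{H}_n(Y)\to C_n(Y)\to\cdots\to C_0(Y)\to\Z\to 0,$$
whence $\widetilde{H}_n(Y)$ embeds in the free abelian group $C_n(Y)$ and is therefore $\Z$-free; this settles the case $K=1$. For $K$ non-trivial finite, Rim's theorem asserts that a $\Z$-free $\Z K$-module is $\Z K$-projective provided its restriction to every subgroup $L\leq K$ of prime order is $\Z L$-projective, so I fix such an $L$ and aim to show that $\widetilde{H}_n(Y)$ is $\Z L$-projective.

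By hypothesis $Y^L$ is contractible. Combined with the $(n-1)$-connectedness of $Y$ and the long exact homology sequence for the pair $(Y,Y^L)$, this collapses to an exact sequence of $\Z L$-modules
$$0\to\widetilde{H}_n(Y)\to C_n(Y,Y^L)\to\cdots\to C_0(Y,Y^L)\to 0.$$
The key observation is that each $C_i(Y,Y^L)$ is a free $\Z L$-module: it is a direct sum of permutation modules $\Z[L/\operatorname{Stab}_L(\sigma)]$ indexed by $L$-orbit representatives of $i$-cells outside $Y^L$, and since $\operatorname{Stab}_L(\sigma)$ is a proper subgroup of $L$ while $|L|$ is prime, one must have $\operatorname{Stab}_L(\sigma)=1$.

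A standard syzygy argument now finishes the proof: starting from the projective module $C_0(Y,Y^L)$ and iteratively splitting the short exact sequences $0\to\ker d_i\to C_i(Y,Y^L)\to\ker d_{i-1}\to 0$, one sees at each stage that $\ker d_i$ is a direct summand of a free $\Z L$-module and hence $\Z L$-projective. In particular $\widetilde{H}_n(Y)=\ker d_n$ is $\Z L$-projective, and Rim's theorem then delivers the full $\Z K$-projectivity. The only step requiring external input is the appeal to Rim's theorem; the geometric content is the clean observation that prime-order stabilizers force the cells outside $Y^L$ to be freely permuted, which in turn makes the relative chain complex free.
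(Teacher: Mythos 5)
The paper itself supplies no proof here --- it simply cites Proposition $6.2$ of \cite{Deltapaper} --- so your argument has to stand on its own. The geometric half is correct and nicely presented: for $L$ of prime order every cell outside $Y^L$ has trivial $L$-stabiliser, so $C_*(Y,Y^L)$ is a complex of free $\Z L$-modules; contractibility of $Y^L$ together with $(n-1)$-connectedness of $Y$ collapses the homology of the pair, giving a finite free $\Z L$-resolution of $\widetilde H_n(Y)$; and the syzygy-splitting argument then yields $\Z L$-projectivity. The difficulty is the detection theorem you invoke. Rim's theorem says that a $\Z$-free $\Z K$-module is projective if and only if its restriction to every \emph{Sylow} subgroup of $K$ is projective, and Chouinard's theorem (which this paper uses elsewhere, and which holds over $\Z$) detects projectivity on the \emph{elementary abelian} subgroups. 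Neither reduces the test to cyclic subgroups of prime order, and the stronger reduction you assert is false: already for $E\cong(\Z/p)^2$ over $\F$ there are modules free on restriction to each of the $p+1$ subgroups of order $p$ that are not $\F E$-projective --- for instance the Carlson module $L_\zeta$ attached to a quadratic class $\zeta\in H^2(E,\F)$ with no $\F$-rational zeros, such as $\zeta=x^2+xy+y^2$ when $p=2$. Dade's lemma requires shifted cyclic subgroups over the algebraic closure, not merely honest subgroups of order $p$. So your argument proves $\Z L$-projectivity for all prime-order $L\leq K$, but that does not imply $\Z K$-projectivity.

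The repair is to route through Chouinard and strengthen the geometric step accordingly. Fix a non-trivial elementary abelian $p$-subgroup $E\leq K$ and replace $Y^L$ by the singular set $Y^{\operatorname{sing}}:=\bigcup_{1\neq L\leq E}Y^L$. The cells of $Y$ outside $Y^{\operatorname{sing}}$ are precisely those with trivial $E$-stabiliser, so $C_*(Y,Y^{\operatorname{sing}})$ again consists of free $\Z E$-modules. Contractibility of $Y^{\operatorname{sing}}$ follows from the hypothesis via a nerve argument: for non-trivial $L_0,\ldots,L_k\leq E$ one has $Y^{L_0}\cap\cdots\cap Y^{L_k}=Y^{\langle L_0,\ldots,L_k\rangle}$, which is contractible by assumption, so the nerve of the covering of $Y^{\operatorname{sing}}$ by the $Y^L$ (taken over the subgroups $L$ of order $p$) is a full simplex. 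Your syzygy argument then applies verbatim to $C_*(Y,Y^{\operatorname{sing}})$ and gives $\Z E$-projectivity of $\widetilde H_n(Y)$, after which Chouinard's theorem yields $\Z K$-projectivity. In short, the shape of your proof and the geometric observation about stabilisers are exactly right, but the appeal to a prime-order detection theorem is a genuine, non-cosmetic gap.
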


\begin{proof}
  This is Proposition $6.2$ of \cite{Deltapaper}.
\end{proof}

Finally, we require the following two lemmas:

\begin{lemma}\label{fin functors implies direct sum fin}

Let $$0\ra F_1\ra F \ra F_2\ra 0$$ be an exact sequence of
functors from $\Mod_R$ to $\Mod_S$. If $F_1$ and $F_2$ are
finitary, then so is $F$.

\end{lemma}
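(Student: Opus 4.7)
The plan is a straightforward five-lemma (or snake-lemma) diagram chase, modelled on the argument used in Lemma \ref{direct sum fin implies each term fin} but now going in the opposite direction. Given any filtered colimit system $(M_{\lambda})$ of $R$-modules, I would evaluate the short exact sequence of functors at each $M_{\lambda}$ and at $\colim_{\lambda} M_{\lambda}$, producing short exact sequences in $\Mod_S$. Because filtered colimits are exact in the category of $S$-modules, applying $\colim_{\lambda}$ to the first family of short exact sequences yields another short exact sequence, and the canonical comparison maps assemble into a commutative diagram with exact rows
$$\xymatrix{\colim_{\lambda} F_1(M_{\lambda})\ar@{>->}[r]\ar[d]_{f_1} & \colim_{\lambda} F(M_{\lambda})\ar@{->>}[r]\ar[d]^{f} & \colim_{\lambda} F_2(M_{\lambda})\ar[d]^{f_2} \\ F_1(\colim_{\lambda} M_{\lambda})\ar@{>->}[r] & F(\colim_{\lambda} M_{\lambda})\ar@{->>}[r] & F_2(\colim_{\lambda} M_{\lambda}).}$$

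The second step is to invoke the hypothesis: since $F_1$ and $F_2$ are finitary, the outer vertical maps $f_1$ and $f_2$ are isomorphisms. The five lemma (padding the rows with zeros on either side) then forces the middle map $f$ to be an isomorphism as well. Since this holds for an arbitrary filtered colimit system $(M_{\lambda})$, the functor $F$ is finitary, which is what we wanted.

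The only point requiring care is the exactness of the top row, which relies on the fact that filtered colimits commute with finite limits in $\Mod_S$; this is standard and causes no real obstacle. I do not expect any serious difficulty here, as the argument is formally dual to, and slightly simpler than, the one already carried out for Lemma \ref{direct sum fin implies each term fin}, where two snake-lemma applications were needed to recover both $f_1$ and $f_2$ from $f$. In our situation we already have both outer maps in hand, so a single application of the five lemma finishes the proof.
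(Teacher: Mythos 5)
Your proof is correct and follows exactly the same strategy as the paper: evaluate the short exact sequence of functors on the colimit system, use exactness of filtered colimits to get a commutative diagram with exact rows, and apply the Five Lemma with the two outer maps being isomorphisms. Your remark about exactness of the top row makes explicit a point the paper leaves tacit, but the argument is otherwise identical.
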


\begin{proof}

Let $(M_{\lambda})$ be a filtered colimit system of $R$-modules.
We have the following commutative diagram:
$$\xymatrix{0\ar[r] & \colim_{\lambda}
F_1(M_{\lambda})\ar[r]\ar[d]^{f_1} & \colim_{\lambda}
F(M_{\lambda})\ar[r]\ar[d]^{f} & \colim_{\lambda}
F_2(M_{\lambda})\ar[r]\ar[d]^{f_2} & 0\\0\ar[r] &
F_1(\colim_{\lambda} M_{\lambda})\ar[r] & F(\colim_{\lambda}
M_{\lambda})\ar[r] & F_2(\colim_{\lambda} M_{\lambda})\ar[r] &
0}$$ Now, as $F_1$ and $F_2$ are finitary, the maps $f_1$ and
$f_2$ are isomorphisms. It then follows from the Five Lemma that
$f$ is an isomorphism, and we conclude that $F$ is finitary.

\end{proof}

\begin{lemma}\label{exact seq of aef mods lemma}

Let $G$ be a group. If we have an exact sequence of $\Z G$-modules
$$0\ra A_r\ra A_{r-1}\ra\cdots\ra A_0\ra \Z\ra 0$$
such that, for each $i=0,\ldots,r$, the functor $\EXT^*_{\Z
G}(A_i,-)$ is finitary in all sufficiently high dimensions, then
$G$ has cohomology almost everywhere finitary.

\end{lemma}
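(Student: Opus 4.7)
My plan is first to reduce the lemma to the case $r = 1$ of a single short exact sequence, and then handle that case via the long exact sequence of $\EXT$-functors together with Lemma \ref{fin functors implies direct sum fin}.

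For the reduction, set $K_0 := \Z$, $K_r := A_r$, and $K_i := \im(A_i \to A_{i-1})$ for $1 \leq i \leq r-1$, so that the given exact sequence breaks up into short exact sequences
$$0 \to K_{i+1} \to A_i \to K_i \to 0 \qquad (i = 0, 1, \ldots, r-1).$$
I will show by reverse induction on $i$, descending from $i = r$ to $i = 0$, that $\EXT^n_{\Z G}(K_i, -)$ is finitary for every sufficiently large $n$. The base case $i = r$ is just the hypothesis on $A_r$, and the case $i = 0$ is the desired statement that $G$ has cohomology almost everywhere finitary.

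Each inductive step reduces to the following one-step claim: if $0 \to A \to B \to C \to 0$ is a short exact sequence of $\Z G$-modules and $\EXT^n_{\Z G}(A, -)$, $\EXT^n_{\Z G}(B, -)$ are finitary in all sufficiently large dimensions, then so is $\EXT^n_{\Z G}(C, -)$. To prove this, apply $\Hom_{\Z G}(-,-)$ to obtain the usual long exact sequence of functors; extracting the piece around $\EXT^n_{\Z G}(C,-)$ yields a short exact sequence of functors
$$0 \to \coker(\alpha) \to \EXT^n_{\Z G}(C, -) \to \ker(\beta) \to 0,$$
where $\alpha \colon \EXT^{n-1}_{\Z G}(B, -) \to \EXT^{n-1}_{\Z G}(A, -)$ and $\beta \colon \EXT^n_{\Z G}(B, -) \to \EXT^n_{\Z G}(A, -)$ are the natural transformations induced by $A \hookrightarrow B$. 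For $n$ large enough, all four Ext functors appearing here are finitary by hypothesis, and since filtered colimits are exact in $\Ab$, the kernel and cokernel of a natural transformation between finitary functors are again finitary. Lemma \ref{fin functors implies direct sum fin} then forces $\EXT^n_{\Z G}(C, -)$ to be finitary, completing the inductive step.

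The only point needing attention is the closure of the class of finitary functors under kernels and cokernels of natural transformations, but this is a routine consequence of exactness of filtered colimits in $\Ab$. Since the resolution has finite length $r$, the reverse induction terminates after finitely many steps, so no further obstacle arises.
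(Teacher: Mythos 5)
Your proof is correct and follows essentially the same strategy as the paper: decompose the resolution into short exact sequences of syzygies $0 \to K_{i+1} \to A_i \to K_i \to 0$ and propagate the finitary property down to $K_0 = \Z$ via long exact sequences in $\EXT$. The paper phrases the one-step transfer as a direct Five Lemma argument on the ladder comparing $\colim$ with the value at $\colim$, while you extract a short exact sequence of functors $0 \to \coker(\alpha) \to \EXT^n_{\Z G}(C,-) \to \ker(\beta) \to 0$ and invoke Lemma~\ref{fin functors implies direct sum fin} together with closure of finitary functors under kernels and cokernels; these are just two packagings of the same underlying homological argument.
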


\begin{proof}

If $r=0$, then the result follows immediately. Assume, therefore,
that $r\geq 1$, and proceed by induction.

If $r=1$, then we have the short exact sequence $$A_1\mono A_0\epi
\Z$$ which gives the following long exact sequence:
$$\begin{array}{lrrrrrr}
   \cdots\ra\EXT^j_{\Z G}(A_0,-)\ra\EXT^j_{\Z G}(A_1,-)\ra
H^{j+1}(G,-)\ra & \blah & \blah & \blah & \blah & \blah & \blah
\end{array}$$
$$\begin{array}{llllr} \blah & \blah & \blah & \blah & \ra\EXT^{j+1}_{\Z G}(A_0,-)\ra\EXT^{j+1}_{\Z G}(A_1,-)\ra\cdots
  \end{array}$$ and as both $\EXT^*_{\Z G}(A_0,-)$ and $\EXT^*_{\Z G}(A_1,-)$ are
finitary in all sufficiently high dimensions, it follows from the
Five Lemma that $G$ has cohomology almost everywhere finitary.

Now suppose that we have shown this for $r-1$, and that we have an
exact sequence $$0\ra A_r\ra A_{r-1}\ra\cdots\ra A_0\ra \Z\ra 0$$
such that, for each $i=0,\ldots,r$, the functor $\EXT^*_{\Z
G}(A_i,-)$ is finitary in all sufficiently high dimensions. Let
$K:=\ker(A_{r-2}\ra A_{r-3})$, so we have the short exact sequence
$$A_r\mono A_{r-1}\epi K,$$ and an argument similar to above shows that
$\EXT^*_{\Z G}(K,-)$ is finitary in all sufficiently high
dimensions. We then have the following exact sequence:
$$0\ra K\ra A_{r-2}\ra\cdots\ra A_0\ra \Z\ra 0,$$ and the result
now follows by induction.

\end{proof}

Finally, we can now prove the converse:

\begin{theorem}\label{finitary nilpotent
result}

Let $G$ be a locally (polycyclic-by-finite) group of finite
virtual cohomological dimension. If the normalizer of every
non-trivial finite subgroup of $G$ is finitely generated, then $G$
has cohomology almost everywhere finitary.

\end{theorem}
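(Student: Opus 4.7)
The plan is to build a short finite-length resolution of $\Z$ whose terms all have $\EXT$ finitary in high dimensions, and then quote Lemma \ref{exact seq of aef mods lemma}. The complex $\poset$ from Definition \ref{definition of delta} is the natural starting point: since $G$ has finite virtual cohomological dimension, Lemma \ref{finite vcd lemma} gives both a bound on the orders of finite subgroups of $G$ and finitely many conjugacy classes of them, so $\poset$ is a finite-dimensional $G$-CW-complex with only finitely many $G$-orbits of cells in each dimension (an orbit of a chain $H_0<\cdots<H_k$ is determined by the conjugacy class of $H_k$ together with a chain of subgroups inside the finite group $H_k$).

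Let $d=\dim\poset$. First I would apply Proposition \ref{n-1 connected Y} to embed $\poset$ into a $d$-dimensional, $(d{-}1)$-connected $G$-CW-complex $\widetilde Y$ on which $G$ acts freely outside $\poset$. Because the new cells have trivial stabilizer, $\widetilde Y^K=\poset^K$ is contractible for every non-trivial finite subgroup $K$, so Proposition \ref{why is reduced homology projective} tells us that $\widetilde H_d(\widetilde Y)$ is projective over $\Z K$ for every finite $K\leq G$; then Proposition \ref{finite projective dimension}, combined with the fact that $G$ has a finite-dimensional $\eg$, upgrades this to finite projective dimension over $\Z G$. The augmented cellular chain complex of $\widetilde Y$ therefore yields an exact sequence
$$0\ra \widetilde H_d(\widetilde Y)\ra C_d(\widetilde Y)\ra \cdots\ra C_0(\widetilde Y)\ra \Z\ra 0$$
of $\Z G$-modules, and $\EXT^*_{\Z G}(\widetilde H_d(\widetilde Y),-)$ vanishes in high dimensions, hence is trivially finitary there.

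It remains to show that each $\EXT^*_{\Z G}(C_i(\widetilde Y),-)$ is finitary in \emph{all} dimensions. Each $C_i(\widetilde Y)$ is a finite direct sum of permutation modules $\Z[G/S]$ where $S$ is a cell stabilizer: either $S=1$ (for the cells outside $\poset$) or $S=\bigcap_{j=0}^{i}N_G(H_j)$ for some chain $H_0<\cdots<H_i$ of non-trivial finite subgroups. In the latter case $S\leq N_G(H_0)$, which is finitely generated by hypothesis and hence polycyclic-by-finite because $G$ is locally (polycyclic-by-finite); since every subgroup of a polycyclic-by-finite group is again polycyclic-by-finite and therefore of type $\fpinfty$, $S$ is of type $\fpinfty$. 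By the Eckmann--Shapiro Lemma, $\EXT^*_{\Z G}(\Z[G/S],-)\cong H^*(S,-)$, which is finitary in every dimension by Proposition \ref{brown prop}; Lemma \ref{fin functors implies direct sum fin} then handles the finite direct sum. Feeding the displayed exact sequence into Lemma \ref{exact seq of aef mods lemma} completes the proof.

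The main obstacle is the verification that the cell stabilizers are of type $\fpinfty$, and in particular the observation that the hypothesis on \emph{normalizers} suffices because arbitrary intersections $\bigcap N_G(H_j)$ sit inside the single normalizer $N_G(H_0)$ and one then exploits the local condition on $G$. The geometric side (finite dimension of $\poset$, finiteness of orbits, the fixed-point contractibility feeding Proposition \ref{why is reduced homology projective}) is essentially bookkeeping given Lemma \ref{finite vcd lemma} and Definition \ref{definition of delta}.
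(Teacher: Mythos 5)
Your strategy is the same as the paper's, and the skeleton of the argument is right: embed $\poset$ via Proposition \ref{n-1 connected Y}, kill the top reduced homology with Propositions \ref{why is reduced homology projective} and \ref{finite projective dimension}, decompose the chain groups into permutation modules, apply Eckmann--Shapiro and Proposition \ref{brown prop}, and feed everything into Lemma \ref{exact seq of aef mods lemma}. The one genuine slip is your claim that each $C_i(\widetilde Y)$ is a \emph{finite} direct sum of permutation modules $\Z[G/S]$. The finitely-many-orbits count is correct only for the cells of $\poset$ itself; the cells added in the construction of $\widetilde Y$ (the $r$-skeleton of $\poset\ast G\ast\cdots\ast G$) typically fall into infinitely many $G$-orbits when $G$ is infinite (for instance, already a $1$-cell of the form $\tau_1\ast\tau_2$ with $\tau_1,\tau_2$ points in two copies of $G$ lies in an orbit indexed by $\tau_1^{-1}\tau_2\in G$). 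Thus the free part of $C_i(\widetilde Y)$ can be an infinite direct sum of copies of $\Z G$, and Lemma \ref{fin functors implies direct sum fin} does not apply to it; nor is $\Hom_{\Z G}(\bigoplus_I\Z G,-)\cong\prod_I(-)$ finitary when $I$ is infinite, so the stated goal of showing $\EXT^*$ finitary in \emph{all} dimensions is both unattainable and unnecessary. The paper handles this cleanly by only working in degrees $n\geq 1$, where $\EXT^n_{\Z G}(\text{free},-)=0$, so that $\EXT^n_{\Z G}(C_i(\widetilde Y),-)\cong\EXT^n_{\Z G}(C_i(\poset),-)$ is a genuinely finite product over the finitely many $G$-orbits of $\poset$; this is all that Lemma \ref{exact seq of aef mods lemma} requires. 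With that adjustment your proof coincides with the paper's.
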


\begin{proof}

Let $\Lambda(G)$ be the poset of all non-trivial finite subgroups
of $G$, and let $\poset$ denote its realization as a
$G$-simplicial complex. As $G$ has finite virtual cohomological
dimension, there is a bound on the orders of its finite subgroups,
and so $\poset$ is finite-dimensional, say $\dim\poset=r$. From
Proposition \ref{n-1 connected Y}, we can embed $\poset$ into an
$r$-dimensional $G$-CW-complex $Y$ which is $(r-1)$-connected,
such that $G$ acts freely outside $\poset$. Consider the augmented
cellular chain complex of $Y$. As $Y$ is $(r-1)$-connected, it has
trivial homology except in dimension $r$, and so we have the
following exact sequence:
$$0\ra\widetilde{H}_r(Y)\ra C_r(Y)\ra\cdots\ra C_0(Y)\ra\Z\ra 0.$$

In order to show that $G$ has cohomology almost everywhere
finitary, it is enough by Lemma \ref{exact seq of aef mods lemma}
to show that the functors $\EXT^*_{\Z G}(\widetilde{H}_r(Y),-)$
and $\EXT^*_{\Z G}(C_l(Y),-)$, $0\leq l\leq r$, are finitary in
all sufficiently high dimensions.

Firstly, notice that for every non-trivial finite subgroup $K$ of
$G$, $Y^K=\poset^K$, as the copies of $G$ that we have added in
the construction of $Y$ have free orbits, and so have no fixed
points under $K$. Thus, $Y$ is an $r$-dimensional $G$-CW-complex
which is $(r-1)$-connected, such that $Y^K$ is contractible for
every non-trivial finite subgroup $K$ of $G$. It then follows from
Proposition \ref{why is reduced homology projective} that
$\widetilde{H}_r(Y)$ is projective as a $\Z K$-module for all
finite subgroups $K$ of $G$. Then by Proposition \ref{finite
projective dimension}, $\widetilde{H}_r(Y)$ has finite projective
dimension over $\Z G$,
 and so $\EXT^n_{\Z G}(\widetilde{H}_r(Y),-)=0$, and thus is
finitary, for all sufficiently large $n$.

Next, for each $0\leq l\leq r$, consider the functor $\EXT^*_{\Z
G}(C_l(Y),-)$. Provided that $n\geq 1$, we see that
$$\EXT^n_{\Z G}(C_l(Y),-)\cong\EXT^n_{\Z G}(C_l(\poset),-)$$ as the copies of $G$ that we have added
in the construction of $Y$ have free orbits, and so the
free-abelian group on them is a free module. Now,
$$\EXT^n_{\Z G}(C_l(\poset),-)\cong\EXT^n_{\Z G}(\Z\poset_l,-),$$
where $\poset_l$ consists of all the $l$-simplicies
$$K_0<K_1<\cdots<K_l$$ in $\poset$. As $G$ acts on $\poset_l$,
we can therefore split $\poset_l$ up into its $G$-orbits, where
the stabilizer of such a simplex is $\bigcap_{i=0}^l N_G(K_i)$. We
then obtain the following isomorphism:

$$\begin{array}{rcl}
  \EXT^n_{\Z G}(\Z\poset_l,-) & \cong & \EXT^n_{\Z
  G}(\Z[\coprod_{\mathscr{C}} \bigcap_{i=0}^l N_G(K_i)\backslash
  G],-) \\ \blah & \cong &  \prod_{\mathscr{C}}\EXT^n_{\Z G}(\Z[\bigcap_{i=0}^l N_G(K_i)\backslash G],-) \\
  \blah & \cong & \prod_{\mathscr{C}} H^n(\bigcap_{i=0}^l N_G(K_i),-), \\
\end{array}$$ where the product is taken over a set $\mathscr{C}$ of representatives of conjugacy classes of non-trivial
finite subgroups of $G$. Now, as $G$ has finite virtual
cohomological dimension, it follows from Lemma \ref{finite vcd
lemma} that there are only finitely many conjugacy classes of
finite subgroups, and so this product is finite.

Now, for each $l$-simplex $K_0<\cdots<K_l$ we have
$$\bigcap_{i=0}^l N_G(K_i)\leq N_G(K_l).$$ Then, as $N_G(K_l)$
is finitely generated, it follows that $\bigcap_{i=0}^l N_G(K_i)$
is also finitely generated, and hence polycyclic-by-finite.

Therefore, $\bigcap_{i=0}^l N_G(K_i)$ is of type $\fpinfty$, and
so by Proposition \ref{brown prop}, $H^n(\bigcap_{i=0}^l
N_G(K_i),-)$ is finitary. Thus $\EXT^n_{\Z G}(C_l(Y),-)$ is
isomorphic to a finite product of finitary functors, and hence by
Lemma \ref{fin functors implies direct sum fin} is finitary. As
this holds for all $n\geq 1$, we see that $\EXT^*_{\Z
G}(C_l(Y),-)$ is finitary in all sufficiently high dimensions,
which completes the proof.
\end{proof}

\section{Proof of Corollary B}\label{sec:Cor B}

\begin{corB}
  Let $G$ be a locally (polycyclic-by-finite) group. If $G$ has
  cohomology almost everywhere finitary, then every subgroup of
  $G$ also has cohomology almost everywhere finitary.
\end{corB}

\begin{proof}
As $G$ has cohomology almost everywhere finitary, it follows from
Theorem A that $G$ has finite virtual cohomological dimension and
the normalizer of every non-trivial finite subgroup of $G$ is
finitely generated.

Let $H$ be any subgroup of $G$, so $$\vcd H\leq\vcd G<\infty.$$
Also, let $F$ be a non-trivial finite subgroup of $H$. Then
$N_G(F)$ is finitely generated, hence polycyclic-by-finite, and as
$$N_H(F)\leq N_G(F),$$ we see that $N_H(F)$ is also finitely
generated. Therefore, we conclude from Theorem A that $H$ has
cohomology almost everywhere finitary.
\end{proof}

This result does not hold in general, however, as the following
proposition shows:

\begin{prop}\label{cor B not true in general}
  Let $G$ be a group of type $\fpinfty$ which has an infinitely
  generated subgroup $H$, and let $Q$ be a non-trivial finite group. Then
  $G\times Q$ has cohomology almost everywhere finitary, but
  $H\times Q$ does not.
\end{prop}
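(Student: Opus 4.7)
The plan is to verify both halves of the proposition as direct consequences of results already in hand; the interest of the proposition lies not in the proof itself but in the existence of a witness group $G$ (for instance, one may take $G$ to be a Houghton group, or any $\fpinfty$ group containing an infinitely generated subgroup), which is what makes this a genuine obstruction to the converse of Corollary B.

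For the first assertion, observe that $G$ sits inside $G \times Q$ as a subgroup of finite index $|Q|$. As noted in \S \ref{sec: direct prod}, the property of being of type $\fpinfty$ is inherited by supergroups of finite index, so $G \times Q$ is itself of type $\fpinfty$. Proposition \ref{brown prop} then gives that $H^n(G \times Q, -)$ is finitary for \emph{every} $n$, and in particular $G \times Q$ has cohomology almost everywhere finitary.

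For the second assertion, I would argue by contradiction. Suppose $H \times Q$ has cohomology almost everywhere finitary. Then there is some $k \in \N$ such that $H^k(H \times Q, -)$ is finitary. Since $Q$ is a non-trivial finite group, Proposition \ref{direct product prop} applies and forces $H$ to be finitely generated, contradicting the hypothesis that $H$ is infinitely generated. Hence $H \times Q$ cannot have cohomology almost everywhere finitary.

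There is no real obstacle here: the proposition is engineered so that the two halves follow immediately from Brown's characterisation of $\fpinfty$ via finitary functors and from Proposition \ref{direct product prop}, which was proved in precisely the generality needed (no restriction on $N = H$). The only substantive point outside the proof is the need to know that such a $G$ exists; any standard example of an $\fpinfty$ group with an infinitely generated subgroup will do.
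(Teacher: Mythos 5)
Your proof is correct and follows essentially the same route as the paper: the first half via inheritance of type $\fpinfty$ by finite-index supergroups together with Proposition \ref{brown prop}, and the second half as the contrapositive of Proposition \ref{direct product prop}. The only cosmetic difference is that you phrase the second half as a proof by contradiction while the paper states it directly.
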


\begin{proof}
  As $G$ is of type $\fpinfty$, it follows that $G\times Q$ is also
  of type $\fpinfty$, and so has cohomology almost everywhere
  finitary. However, as $H$ is infinitely generated, it follows
  from Proposition \ref{direct product prop} that $H^n(H\times
  Q,-)$ is not finitary for any $n$.
\end{proof}

\begin{remark}
  Let $G$ be the free group on two generators $x,y$, so
  $G$ is of type $\fpinfty$ (Example 2.6 in
  \cite{Homdim}), and let $H$ be the subgroup of $G$ generated by
  $y^nxy^{-n}$ for all $n$.  We then
  have a counter-example showing that Corollary B does not
  hold in general.

\end{remark}

\section{A Result on Elementary Amenable Groups}\label{sec:El Am}

\begin{propC}
  Let $G$ be an elementary amenable group with cohomology almost
  everywhere finitary. Then $G$ has finitely many conjugacy
  classes of finite subgroups, and $C_G(E)$ is finitely generated for
  every $E\leq G$ of order $p$.
\end{propC}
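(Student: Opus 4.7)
The plan is to mimic the argument of Theorem~\ref{split infinitary result}, using Kropholler's theorem to produce a torsion-free normal subgroup of finite index on which that argument can run. By Theorem $2.1$ of \cite{Continuity:cohomologyfunctors}, $G$ admits a finite-dimensional model for $\eg$ and there is a uniform bound on the orders of the finite subgroups of $G$. I would combine this with structural results for elementary amenable groups of finite Hirsch length to conclude that $G$ has finite virtual cohomological dimension, and hence in particular finitely many conjugacy classes of finite subgroups. This yields the first conclusion of the proposition and, by choosing a torsion-free normal subgroup $N\lhd G$ of finite index, sets up the second. I expect this step to be the main obstacle, since it is where the elementary amenable hypothesis is doing essential work: bounded torsion together with a finite-dimensional $\eg$ need not give finite vcd for a general group.

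For the centralizer claim, fix $E\leq G$ of order $p$ and set $H:=NE$. Since $N$ is torsion-free, $E\cap N=1$, so $H=N\rtimes E$ and every non-trivial finite subgroup of $H$ is a conjugate of $E$. By Lemma~\ref{finite index}, $H$ has cohomology almost everywhere finitary. I would then apply the $\Z\Lambda$-argument from the proof of Theorem~\ref{split infinitary result} with $H$ in place of $G$: form the short exact sequence $J\mono\Z\Lambda\epi\Z$, where $\Lambda$ is the set of non-trivial finite subgroups of $H$; invoke Proposition~\ref{finite projective dimension} (noting that $\eh$ is finite-dimensional by restriction from $\eg$) to see that $J$ has finite projective dimension over $\Z H$; and use the Eckmann--Shapiro lemma together with the fact that $\Lambda$ consists of a single $H$-conjugacy class to obtain
$$H^n(H,-)\;\cong\;H^n(N_H(E),-)$$
in all sufficiently high dimensions. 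Hence $N_H(E)$ has cohomology almost everywhere finitary.

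To conclude, note that $|N_H(E):C_H(E)|\leq|\Aut(E)|<\infty$, so Lemma~\ref{finite index} gives that $C_H(E)$ has cohomology almost everywhere finitary. A direct computation shows $C_H(E)=C_N(E)\times E$, so Proposition~\ref{direct product prop} implies that $C_N(E)$ is finitely generated. Finally $C_N(E)\leq C_G(E)$ has index at most $|G:N|$, which is finite, so $C_G(E)$ is itself finitely generated, as required.
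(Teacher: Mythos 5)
Your approach is essentially identical to the paper's: apply Kropholler's theorem, reduce to finite vcd via the elementary amenable structure theory (Hillman--Linnell plus Wehrfritz, which is exactly how the paper generalizes Lemma~\ref{finite vcd lemma}), set $H:=NE$, run the $\Z\Lambda$-argument from Theorem~\ref{split infinitary result}, and finish with Proposition~\ref{direct product prop} and an index argument. The only flaw is your assertion that ``every non-trivial finite subgroup of $H$ is a conjugate of $E$,'' which you then use to obtain $H^n(H,-)\cong H^n(N_H(E),-)$ directly. That assertion is false in general: taking $N=\Z^2$ with $E$ of order $2$ acting by negation, $H=\Z^2\rtimes\Z/2$ has four $H$-conjugacy classes of subgroups of order $2$, not one. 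What is true is that $\Lambda$ consists of groups of order $p$ (since any finite $K\leq H$ has $K\cap N=1$), so it splits into finitely many $H$-orbits, and Eckmann--Shapiro gives $H^n(H,-)\cong\prod_{K\in\mathscr C}H^n(N_H(K),-)$. You then need Lemma~\ref{direct sum fin implies each term fin} to extract the factor $H^n(N_H(E),-)$ and conclude that it is finitary; this is the step your version omits by claiming a single conjugacy class. The rest of the argument ($C_H(E)\cong E\times C_N(E)$, Proposition~\ref{direct product prop}, finite index) is correct and matches the paper.
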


\begin{proof}
  Let $G$ be an elementary amenable group with cohomology almost
  everywhere finitary. Kropholler's Theorem (Theorem $2.1$ in
  \cite{Continuity:cohomologyfunctors}) applies to a large class
  of groups, which includes all elementary amenable groups. This
  theorem implies that $G$ has a finite dimensional model for
  $\eg$, and that $G$ has a bound on the orders of its finite
  subgroups. The proof of Lemma \ref{finite vcd lemma} generalizes immediately to the elementary amenable case,
  and we conclude that $G$ has finitely many conjugacy classes of
  finite subgroups, and furthermore that $G$ has finite virtual cohomological
  dimension. Therefore, we can choose a torsion-free normal subgroup $N$
  of $G$ of finite index.

  Let $E$ be any subgroup of $G$ of order $p$, and let $H:=NE$.
  Following the proof of Theorem \ref{split infinitary result}, we
  see that $N_H(E)$ has cohomology almost everywhere finitary.
  Hence, $$C_H(E)\cong E\times C_N(E)$$ also has cohomology almost
  everywhere finitary, and so by Proposition \ref{direct product
  prop} we see that $C_N(E)$ is finitely generated. The result now follows.
\end{proof}

\section{Generalization to Groups of Finite Virtual Cohomological
Dimension}\label{sec:fin vcd}

In this section, we shall prove Theorem D. It suffices to show
this for the case $R=\F$, by the following lemma:

\begin{lemma}
  Let $G$ be a group, and $R$ be a ring of prime characteristic $p$. Then
  $H^n(G,-)$ is finitary over $R$ if and only if $H^n(G,-)$ is
  finitary over $\F$.
\end{lemma}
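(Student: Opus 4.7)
The plan is to prove the two implications separately. The first is straightforward: since $R$ has characteristic $p$, the unique ring homomorphism $\Z\to R$ factors through $\F$, giving a ring homomorphism $\F\to R$, and Lemma \ref{change of rings lemma} immediately gives that finitariness over $\F$ passes to $R$.

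For the converse, the key observation will be that since $\F$ is a field, the inclusion $\F\hookrightarrow R$ splits as a map of $\F$-modules, yielding a decomposition $R\cong\F\oplus R'$ for some $\F$-vector space $R'$. Consequently, for any $\F G$-module $M$, the induced $RG$-module $M\otimes_\F R$ decomposes naturally as $M\oplus(M\otimes_\F R')$ in the category of $\F G$-modules.

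Now assume that $H^n(G,-)$ is finitary over $R$, and let $(M_\lambda)$ be a filtered colimit system of $\F G$-modules. The modules $(M_\lambda\otimes_\F R)$ form a filtered colimit system of $RG$-modules, and since tensor products commute with colimits its colimit is $(\colim M_\lambda)\otimes_\F R$. Applying the finitary hypothesis over $R$ together with Lemma \ref{change of rings lemma}, one obtains that the natural map
\[
\colim_\lambda\EXT^n_{\F G}(\F,M_\lambda\otimes_\F R)\to\EXT^n_{\F G}(\F,(\colim M_\lambda)\otimes_\F R)
\]
is an isomorphism. The natural decomposition above splits both sides as a direct sum of two maps, one of which is precisely
\[
\colim_\lambda\EXT^n_{\F G}(\F,M_\lambda)\to\EXT^n_{\F G}(\F,\colim_\lambda M_\lambda);
\]
extracting this summand gives finitariness over $\F$.

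The main conceptual point will be checking that the splitting $R=\F\oplus R'$ as $\F$-modules (which exists only because $\F$ is a field) gives a functorial direct summand decomposition of $-\otimes_\F R$ that is compatible with the colimit comparison maps. Once this functorial decomposition is in place, the conclusion reduces to extracting a direct summand of an isomorphism, which is routine.
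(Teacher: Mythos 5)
Your proof is correct and follows essentially the same route as the paper: decompose $R\cong\F\oplus V$ as $\F$-vector spaces, split $M\otimes_\F R\cong M\oplus(M\otimes_\F V)$ naturally, and extract the relevant direct summand of the colimit-comparison isomorphism after identifying $\EXT^n_{RG}(R,-)$ with $\EXT^n_{\F G}(\F,-)$. The only cosmetic difference is that the paper passes through $\EXT^n_{RG}$ and invokes Lemma \ref{direct sum fin implies each term fin} explicitly to extract the summand, while you rewrite in terms of $\EXT^n_{\F G}$ first and observe the extraction directly; also note that the base-change isomorphism you need is the one used \emph{inside} the proof of Lemma \ref{change of rings lemma} (from Chapter 0 of \cite{Homdim}) rather than the statement of that lemma itself.
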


\begin{proof}
  If $H^n(G,-)$ is finitary over $\F$, then it follows from Lemma
  \ref{change of rings lemma} that $H^n(G,-)$ is finitary over $R$.

  Conversely, suppose that $H^n(G,-)$ is finitary over $R$; that
  is, the functor $\EXT^n_{RG}(R,-)$ is finitary. Let
  $(M_{\lambda})$ be a filtered colimit system of $\F G$-modules.
  Then $(M_{\lambda}\otimes_{\F}R)$ is a filtered colimit system
  of $RG$-modules, and so the
  natural map $$\colim_{\lambda}
  \EXT^n_{RG}(R,M_{\lambda}\otimes_{\F}R)\ra\EXT^n_{RG}(R,\colim_{\lambda}
  M_{\lambda}\otimes_{\F}R)$$ is an isomorphism. Now, as an
  $\F$-vector space, $R\cong\F\oplus V$ for some $\F$-vector space
  $V$. Therefore, for each $\lambda$,
  $$M_{\lambda}\otimes_{\F}R\cong M_{\lambda}\oplus
  M_{\lambda}\otimes_{\F}V,$$ and so
  $$\EXT^n_{RG}(R,M_{\lambda}\otimes_{\F}
  R)\cong\EXT^n_{RG}(R,M_{\lambda})\oplus\EXT^n_{RG}(R,M_{\lambda}\otimes_{\F}
  V).$$ It then follows from Lemma \ref{direct sum fin implies each term fin}
  that the natural map
  $$\colim_{\lambda}\EXT^n_{RG}(R,M_{\lambda})\ra\EXT^n_{RG}(R,\colim_{\lambda}
  M_{\lambda})$$ is an isomorphism. Now, we see from Chapter $0$ of \cite{Homdim} that
  $$\EXT^n_{RG}(R,-)\cong\EXT^n_{\F G}(\F,-)$$ on $\F G$-modules,
  so therefore it follows that the natural map $$\colim_{\lambda}\EXT^n_{\F G}(\F, M_{\lambda})\ra\EXT^n_{\F
  G}(\F,\colim_{\lambda} M_{\lambda})$$ is an isomorphism, and hence that $H^n(G,-)$ is finitary over $\F$.
\end{proof}

\subsection{Proof of (i) $\Rightarrow$ (ii)}$ $

Let $G$ be a group of finite virtual cohomological dimension with
cohomology almost everywhere finitary over $\F$. We begin by
showing that the normalizer of every non-trivial elementary
abelian $p$-subgroup of $G$ is of type $\fpinfty$ over $\F$. In
fact, we shall show that the normalizer of every non-trivial
finite $p$-subgroup of $G$ is of type $\fpinfty$ over $\F$.

\begin{lemma}\label{direct product vcd case}
  Let $N$ be any group, and $Q$ be a non-trivial finite group whose order is
  divisible by $p$. If $N\times Q$ has cohomology almost
  everywhere finitary over $\F$, then $N\times Q$ is of type
  $\fpinfty$ over $\F$.
\end{lemma}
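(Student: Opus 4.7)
The plan is to imitate the Künneth argument of Proposition \ref{direct product prop}, but to squeeze out more information by observing that over $\F = \mathbb{F}_p$, when $E$ is a cyclic group of order $p$, the cohomology $H^j(E,\F) \cong \F$ is non-zero in \emph{every} degree, so a single finitary $H^n(N\times E,-)$ tells us about \emph{all} degrees up to $n$ of $H^*(N,-)$.

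First I would pick, by Cauchy's theorem, a subgroup $E\leq Q$ of order $p$, so that $N\times E$ has finite index in $N\times Q$. By Lemma \ref{finite index}, $H^n(N\times Q,-)$ finitary implies $H^n(N\times E,-)$ finitary, and by Lemma \ref{change of rings lemma}, this then holds over $\F$. So for all sufficiently large $n$, the functor $H^n(N\times E,-)$ is finitary over $\F$.

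Next, for any $\F N$-module $M$, viewed as an $\F(N\times E)$-module with trivial $E$-action, the Künneth theorem gives
$$H^n(N\times E, M)\;\cong\;\bigoplus_{i+j=n} H^i(N,M)\otimes_{\F} H^j(E,\F)\;\cong\;\bigoplus_{i=0}^{n} H^i(N,M),$$
exactly as in Proposition \ref{direct product prop}, with the crucial point that \emph{every} summand $H^j(E,\F)\cong\F$ contributes. Since filtered colimits of $\F N$-modules with trivial $E$-action are again such modules, the functor $H^n(N\times E,-)$ being finitary over $\F$ restricts to being finitary on this subcategory; hence $\bigoplus_{i=0}^{n} H^i(N,-)$ is finitary over $\F$ as a functor on $\F N$-modules. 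Repeatedly applying Lemma \ref{direct sum fin implies each term fin}, each individual $H^i(N,-)$ is finitary over $\F$ for $0\leq i\leq n$. Since $n$ can be chosen arbitrarily large, $H^i(N,-)$ is finitary over $\F$ for \emph{every} $i\geq 0$.

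By the $\F$-analogue of Proposition \ref{brown prop} stated in the excerpt (characterising groups of type $\fpinfty$ over $R$ as those with all cohomology functors finitary over $R$), $N$ is of type $\fpinfty$ over $\F$. Finally, $N$ has finite index in $N\times Q$, and the property of being type $\fpinfty$ over $\F$ is inherited by supergroups of finite index (as in Examples $2.6$ of \cite{Homdim}), so $N\times Q$ is of type $\fpinfty$ over $\F$, as required. The only mildly delicate step is the Künneth reduction, which hinges specifically on the mod-$p$ coefficient choice; everything else is a straightforward concatenation of the lemmas already established.
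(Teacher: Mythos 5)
Your proof is correct and follows essentially the same route as the paper's: both pass to the finite-index subgroup $N\times E$ with $E\leq Q$ of order $p$, apply the K\"unneth isomorphism $H^m(N\times E,-)\cong\bigoplus_{i=0}^m H^i(N,-)$ on modules with trivial $E$-action, and invoke Lemma \ref{direct sum fin implies each term fin} to extract finitariness of each $H^i(N,-)$. The paper phrases the argument as a contrapositive while you argue directly, which is purely stylistic; the only small slip is your invocation of Lemma \ref{change of rings lemma}, which does nothing here since the hypothesis is already over $\F$ --- you should instead simply note (as the paper does) that Lemma \ref{finite index} generalizes immediately to $\F$-coefficients.
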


\begin{proof}
  Suppose that $N\times Q$ is not of type $\fpinfty$ over $\F$, so
  $N$ is not of type $\fpinfty$ over $\F$. Therefore, there is
  some $n$ such that $H^n(N,-)$ is not finitary over $\F$.

  Let $E$ be a subgroup of $Q$ of order $p$, so by an argument
  similar to the proof of Proposition \ref{direct product prop} we
  obtain, for each $m$, the following isomorphism of functors: $$H^m(N\times
  E,-)\cong\bigoplus_{i=0}^m H^i(N,-),$$ for modules on which $E$
  acts trivially.

  As $H^n(N,-)$ is not finitary over $\F$, it follows
  from Lemma \ref{direct sum fin implies each term fin} that
  $H^m(N\times E,-)$ is not finitary over $\F$ for all $m\geq n$. Therefore, by an easy generalization of Lemma
  \ref{finite index}, we see that $H^m(N\times Q,-)$
  is not finitary over $\F$ for all $m\geq n$, which is a
  contradiction.
\end{proof}

\begin{lemma}\label{infinitary vcd result}

Let $G$ be a group of finite virtual cohomological dimension with
cohomology almost everywhere finitary over $\F$, and let $E$ be a
subgroup of order $p$. Then the normalizer $N_G(E)$ of $E$ is of
type $\fpinfty$ over $\F$.

\end{lemma}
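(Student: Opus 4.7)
The plan is to mirror the proof of Theorem~\ref{split infinitary result} with $\F$-coefficients throughout, and then use Lemma~\ref{direct product vcd case} to upgrade ``cohomology almost everywhere finitary over $\F$'' to ``$\fpinfty$ over $\F$'' at the decisive step. First, I would exploit $\vcd G<\infty$ to pick a torsion-free normal subgroup $N$ of finite index in $G$ and set $H := NE$. Lemma~\ref{finite index} (whose Shapiro-style proof is ring-independent) ensures that $H$ still has cohomology almost everywhere finitary over $\F$, and one reduces the lemma to proving that $N_H(E)$ is of type $\fpinfty$ over $\F$.

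Because $N$ is torsion-free and $H/N$ has order $p$, every non-trivial finite subgroup of $H$ has order $p$. Letting $\Lambda$ denote this collection, I would form the augmentation sequence $J \mono \F\Lambda \epi \F$ and observe that, as in the integral case, $J$ is free over $\F K$ for every $K\in\Lambda$. Proposition~\ref{finite projective dimension} (which is stated for an arbitrary coefficient ring) then grants $J$ finite projective dimension over $\F H$, so for all sufficiently large $n$,
$$H^n(H,-) \cong \EXT^n_{\F H}(\F\Lambda,-) \cong \prod_{K\in\mathscr{C}} H^n(N_H(K),-),$$
where the second isomorphism comes from splitting $\Lambda$ into $H$-orbits and applying Eckmann--Shapiro. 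Lemma~\ref{direct sum fin implies each term fin} then singles out the $N_H(E)$-factor and shows that $N_H(E)$ has cohomology almost everywhere finitary over $\F$.

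To close the gap, I would observe that $C_H(E) = E \times C_N(E)$ (using that $N$ is torsion-free and normal of index $p$ in $H$, so $E\cap N=1$) has finite index in $N_H(E)$, and consequently also has cohomology almost everywhere finitary over $\F$. Since $|E|=p$, Lemma~\ref{direct product vcd case} promotes this to $\fpinfty$ over $\F$ for $C_H(E)$. Because $H$ has finite index in $G$ and $N_H(E)=H\cap N_G(E)$, the subgroup $C_H(E)$ has finite index in $N_G(E)$, and $\fpinfty$ over $\F$ is inherited in both directions along finite-index inclusions; this yields the conclusion. The main obstacle is really cosmetic: one has to verify that every ingredient of Theorem~\ref{split infinitary result} survives the transition from $\Z$ to $\F$ (finiteness of $\projdim_{\F H} J$, freeness of $J$ over $\F K$, and the Eckmann--Shapiro decomposition), all of which are ring-general. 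The genuine new input is Lemma~\ref{direct product vcd case}, which is precisely what converts the weaker ``finitary'' conclusion into the stronger $\fpinfty$ statement demanded here.
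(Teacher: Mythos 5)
Your proof is correct and follows essentially the same route as the paper: pass to $H=NE$, run the argument of Theorem~\ref{split infinitary result} with $\F$-coefficients to show $N_H(E)$ has cohomology almost everywhere finitary over $\F$, identify $C_H(E)\cong E\times C_N(E)$, upgrade to $\fpinfty$ over $\F$ via Lemma~\ref{direct product vcd case}, and finish by finite-index transfer to $N_G(E)$. You have simply written out explicitly the ``slight variation'' on Theorem~\ref{split infinitary result} that the paper leaves implicit.
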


\begin{proof}

As $G$ has finite virtual cohomological dimension, we can choose a
torsion-free normal subgroup $N$ of finite index. Let $H:=NE$. A
slight variation on the proof of Theorem \ref{split infinitary
result} shows that $N_H(E)$ has cohomology almost everywhere
finitary over $\F$. Therefore,
$$C_H(E)\cong E\times C_N(E)$$ has cohomology almost everywhere
finitary over $\F$, and by Lemma \ref{direct product vcd case}, we
see that $C_H(E)$ is of type $\fpinfty$ over $\F$. Thus, $N_G(E)$
is of type $\fpinfty$ over $\F$, as required.

\end{proof}

\begin{theorem}\label{p-subgroup
case}

Let $G$ be a group of finite virtual cohomological dimension with
cohomology almost everywhere finitary over $\F$, and let $F$ be a
non-trivial finite $p$-subgroup. Then the normalizer $N_G(F)$ of
$F$ is of type $\fpinfty$ over $\F$.

\end{theorem}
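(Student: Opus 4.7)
The plan is to prove Theorem \ref{p-subgroup case} by strong induction on $|F|$. The base case $|F|=p$ is exactly Lemma \ref{infinitary vcd result}, so assume $|F|>p$ and that the result holds for every group of finite virtual cohomological dimension with cohomology almost everywhere finitary over $\F$ and every $p$-subgroup of strictly smaller order.

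Since $F$ is a non-trivial finite $p$-group, its center $Z(F)$ is non-trivial and therefore contains a subgroup $E$ of order $p$. Then $E\lhd F$ and $F\leq C_G(E)\leq N_G(E)$, so $\bar F:=F/E$ makes sense as a $p$-subgroup of $\bar H:=N_G(E)/E$, with $|\bar F|=|F|/p<|F|$. The key point is that $\bar H$ inherits the hypotheses of the theorem: $N_G(E)$ has finite virtual cohomological dimension as a subgroup of $G$, and is of type $\fpinfty$ over $\F$ by Lemma \ref{infinitary vcd result}; passing to the quotient by the finite normal subgroup $E$ preserves both properties, and being $\fpinfty$ over $\F$ implies cohomology almost everywhere finitary over $\F$ via the $\F$-analogue of Proposition \ref{brown prop}. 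The inductive hypothesis applied to $(\bar H,\bar F)$ therefore yields that $N_{\bar H}(\bar F)$ is of type $\fpinfty$ over $\F$.

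A routine computation (using that any $g\in N_G(E)$ with $g F g^{-1}\cdot E=F$ must satisfy $gFg^{-1}\subseteq F$, hence $gFg^{-1}=F$ by comparing orders) identifies
$$N_{\bar H}(\bar F)=N_{N_G(E)}(F)/E=\bigl(N_G(F)\cap N_G(E)\bigr)/E.$$
Since $E$ is finite, it follows that $N_G(F)\cap N_G(E)$ is of type $\fpinfty$ over $\F$. To finish, observe that $N_G(F)$ acts by conjugation on the finite set of order-$p$ subgroups of $F$, and that $N_G(F)\cap N_G(E)$ is the stabilizer of $E$ under this action. By orbit-stabilizer, $N_G(F)\cap N_G(E)$ has finite index in $N_G(F)$, and since type $\fpinfty$ over $\F$ passes to finite-index supergroups, $N_G(F)$ is itself of type $\fpinfty$ over $\F$.

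I do not foresee any serious obstacle. The most delicate step is confirming that $N_G(E)/E$ honestly satisfies the hypotheses of the theorem so that the inductive hypothesis can be invoked on a strictly smaller $p$-group; this rests on the standard behaviour of finite virtual cohomological dimension and of type $\fpinfty$ over $\F$ under extensions by (and quotients by) finite normal subgroups, together with the closure of type $\fpinfty$ over $\F$ under finite-index subgroups and supergroups, all of which are available from the earlier machinery of the paper.
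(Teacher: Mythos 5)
Your proof is correct and follows essentially the same strategy as the paper: induction on $|F|$, choosing $E \leq Z(F)$ of order $p$, applying Lemma~\ref{infinitary vcd result}, passing to the quotient by $E$ and invoking the inductive hypothesis on $F/E$, then returning to $N_G(F)$ via a finite-index argument. The only cosmetic differences are that you work inside $N_G(E)/E$ rather than the paper's $C_G(E)/E$ (arguably slightly cleaner, since the lemma directly concerns $N_G(E)$), and you conclude the finite-index step $|N_G(F):N_G(F)\cap N_G(E)|<\infty$ by orbit--stabilizer on the set of order-$p$ subgroups of $F$, whereas the paper sandwiches $C_G(F)\leq N_G(F)\cap C_G(E)\leq N_G(F)$ and uses $|N_G(F):C_G(F)|<\infty$.
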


\begin{proof}

Suppose that $F$ has order $p^k$, where $k\geq 1$. We proceed by
induction on $k$.

If $k=1$, then the result follows from Lemma \ref{infinitary vcd
result}.

Suppose now that $k\geq 2$. As the centre $\zeta(F)$ of $F$ is
non-trivial, we can choose a subgroup $E\leq\zeta(F)$ of order
$p$. Then $C_G(E)$ is of type $\fpinfty$ over $\F$ by Lemma
\ref{infinitary vcd result}, and Proposition $2.7$ in
\cite{Homdim} shows that $C_G(E)/E$ is also of type $\fpinfty$
over $\F$. By induction, the normalizer of $F/E$ in $C_G(E)/E$,
which is
$$(N_G(F)\cap C_G(E))/E,$$ is of type $\fpinfty$ over $\F$.
Another application of Proposition $2.7$ in \cite{Homdim} shows
that $N_G(F)\cap C_G(E)$ is of type $\fpinfty$ over $\F$, and as
$$C_G(F)\leq N_G(F)\cap C_G(E)\leq N_G(F),$$ we see that
$N_G(F)$ is of type $\fpinfty$ over $\F$.

\end{proof}

Next, we shall show that $G$ has finitely many conjugacy classes
of elementary abelian $p$-subgroups. Firstly, we need the
following lemma:

\begin{lemma}\label{f.d. vect sp lemma}
  Let $G$ be a group. If $H^n(G,-)$ is finitary over
  $\F$, then $H^n(G,\F)$ is finite-dimensional as an $\F$-vector
  space.
\end{lemma}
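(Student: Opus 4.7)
The plan is to argue by contradiction: suppose $V:=H^n(G,\F)$ is infinite-dimensional, and derive a contradiction by computing $H^n(G,\F^I)$ in two different ways for a suitably infinite index set $I$, where $\F^I=\prod_I\F$ is regarded as a trivial $\F G$-module.

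First I would show that the finitary hypothesis upgrades additivity into a natural isomorphism $H^n(G,W)\cong V\otimes_\F W$ for every trivial $\F G$-module $W$. There is a canonical natural transformation $V\otimes_\F W\to H^n(G,W)$ sending $v\otimes w$ to $H^n(G,\mu_w)(v)$, where $\mu_w\colon\F\to W$ denotes multiplication by $w$. Writing $W$ as the filtered colimit of its finite-dimensional subspaces $W_\lambda$, and noting that on each $W_\lambda\cong\F^{d_\lambda}$ this transformation is an isomorphism by additivity of $H^n(G,-)$, we may pass to the colimit and apply the finitary hypothesis on one side and the fact that $V\otimes_\F -$ commutes with colimits on the other to conclude. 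Next I would invoke the standard fact that the derived functors of $\Hom_{\F G}(\F,-)$ always commute with arbitrary direct products in the coefficient variable (a product of injective modules is injective, so a product of injective resolutions is an injective resolution of the product), giving $H^n(G,\F^I)\cong\prod_I V=V^I$ via the map induced by projections.

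Combining the two calculations applied to $W=\F^I$ gives an isomorphism $V\otimes_\F\F^I\xrightarrow{\cong}V^I$. Unwinding definitions, this must be the canonical ``tensor-to-product'' map sending $v\otimes(f_i)_i$ to $(f_i\,v)_i$, whose image consists precisely of those tuples $(u_i)_i\in V^I$ whose coordinates $\{u_i\}$ collectively span a finite-dimensional subspace of $V$. Taking $I$ to be any infinite set with $|I|\geq\dim_\F V$, we may choose a tuple $(u_i)$ whose coordinates span all of $V$, so this map fails to be surjective, a contradiction. Hence $V$ is finite-dimensional. The main subtle point is verifying that the two identifications of $H^n(G,\F^I)$ really do compose to the canonical tensor-to-product map (rather than some twisted variant); this follows by a routine chase through the construction of both isomorphisms, using that the projection $\pi_i\colon\F^I\to\F$ corresponds on the $W_\lambda$ level to multiplication by the $i$-th coordinate $w_i\in\F$.
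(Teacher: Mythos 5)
Your proof is correct, but it follows a genuinely different path from the paper's. The paper's argument is shorter: it uses the Universal Coefficient Theorem to identify $H^n(G,W)\cong\Hom_\F(H_n(G,\F),W)$ for trivial coefficients $W$, then observes that the finitary hypothesis forces commutation with infinite direct sums (a direct sum being a filtered colimit of finite sub-sums), while $\Hom_\F(V,-)$ fails to commute with infinite direct sums when $V$ is infinite-dimensional. You instead avoid UCT altogether: you use the finitary hypothesis itself to establish the identification $H^n(G,W)\cong H^n(G,\F)\otimes_\F W$ on trivial modules (via additivity on finite-dimensional pieces plus passage to a filtered colimit), invoke the standard fact that Ext commutes with products in the second variable, and then exploit the failure of $V\otimes_\F-$ to commute with infinite products. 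Both are contradiction arguments resting on the same kind of size obstruction, but they sit on opposite sides of the sum/product duality: the paper pits $\Hom(V,-)$ against direct sums, you pit $V\otimes-$ against direct products. Your route has a small advantage in rigor: the paper's final line reads $\bigoplus_J\prod_I\F\cong\prod_I\bigoplus_J\F$ as ``clearly a contradiction,'' but these can in fact be abstractly isomorphic as $\F$-vector spaces (both have dimension $2^{\aleph_0}$ when $I,J$ are countable); what is really false is that the \emph{natural} map is an isomorphism. You make exactly this explicit by identifying the composite as the canonical tensor-to-product map and showing its image consists of tuples with finite-dimensional coordinate span, so it cannot be surjective. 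The cost of your approach is the extra verification that the two identifications compose to the canonical map, which you correctly flag and which does go through as you sketch: the composite sends $v\otimes(f_i)_i$ to $\bigl(H^n(G,\pi_j\circ\mu_{(f_i)})(v)\bigr)_j=(f_jv)_j$, since $\pi_j\circ\mu_{(f_i)}$ is multiplication by $f_j$. (One tiny remark: $I$ countably infinite already suffices for the contradiction, since infinite-dimensionality of $V$ yields a countable linearly independent family; the condition $|I|\geq\dim_\F V$ is more than you need.)
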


\begin{proof}
  Suppose that $H^n(G,\F)$ is infinite-dimensional as an
  $\F$-vector space. By the Universal Coefficient Theorem, we have the following isomorphism:
  $$H^n(G,\F)\cong\Hom_{\F}(H_n(G,\F),\F).$$ Hence $H_n(G,\F)$ is
  also infinite-dimensional as an $\F$-vector space, with basis $\{e_i:i\in I\}$, say.
  We then have: $$H^n(G,\F)\cong\prod_I \F.$$

  Next, let $\bigoplus_J \F$ be an infinite direct sum of copies
  of $\F$. As $H^n(G,-)$ is finitary over $\F$, the natural map
  $$\bigoplus_J H^n(G,\F)\ra H^n(G,\bigoplus_J\F)$$ is an
  isomorphism; that is, $$\bigoplus_J\prod_I
  \F\cong\prod_I\bigoplus_J\F,$$ which is clearly a contradiction.
\end{proof}

Next, recall the following definition from \cite{Hennpaper}:

\begin{definition}
  A homomorphism $\phi:A\ra B$ of $\F$-algebras is called a
  \emph{uniform $F$-isomorphism} if and only if there exists a natural number
  $n$ such that:\begin{itemize}
    \item If $x\in\ker\phi$, then $x^{p^n}=0$; and

    \item If $y\in B$, then $y^{p^n}$ is in the image of $\phi$.
  \end{itemize}
\end{definition}

We also have the following result of Henn (Theorem A.$4$ in
\cite{Hennpaper}):

\begin{prop}\label{A4 prop}
  If $G$ is a discrete group such that there exists a
  finite-dimensional contractible $G$-CW-complex $X$ with all cell
  stabilizers finite of bounded order, then there exists a uniform
  $F$-isomorphism $$\phi:H^*(G,\F)\ra\operatorname{lim}_{\mathcal{A}_p(G)^{\operatorname{op}}}
  H^*(E,\F),$$ where $\mathcal{A}_p(G)$ denotes the category with
  objects the elementary abelian $p$-subgroups $E$ of $G$, and
  morphisms the group homomorphisms which can be induced by
  conjugation by an element of $G$.
\end{prop}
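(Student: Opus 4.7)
The plan is to combine the isotropy (equivariant cohomology) spectral sequence for the $G$-action on $X$ with classical Quillen stratification applied at each cell stabilizer. The map $\phi$ itself is straightforward to construct: for each elementary abelian $p$-subgroup $E\leq G$ the inclusion $E\hookrightarrow G$ induces a restriction $H^*(G,\F)\to H^*(E,\F)$, and these restrictions are compatible with morphisms in $\mathcal{A}_p(G)$ because inner automorphisms of $G$ act trivially on $H^*(G,\F)$, so they assemble into a well-defined $\F$-algebra map into $\operatorname{lim}_{\mathcal{A}_p(G)^{\operatorname{op}}}H^*(E,\F)$.

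To analyze $\phi$ I would use the equivariant cohomology spectral sequence
$$E_1^{p,q}=\prod_{\sigma\in X_p/G}H^q(G_\sigma,\F)\;\Longrightarrow\;H^{p+q}(G,\F),$$
which converges because $X$ is contractible and has only finitely many nonzero columns since $\dim X=d<\infty$. Each stabilizer $G_\sigma$ is a finite subgroup with $|G_\sigma|\leq M$ for some fixed $M$; since there are only finitely many isomorphism types of such groups, classical Quillen stratification applied to each $G_\sigma$ gives a uniform $F$-isomorphism $H^*(G_\sigma,\F)\to\operatorname{lim}_{\mathcal{A}_p(G_\sigma)^{\operatorname{op}}}H^*(E,\F)$ whose nilpotency and $p$-power exponents can be chosen independently of $\sigma$. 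Functoriality of restriction makes $\phi$ compatible with the analogous spectral sequence built from the same cell data but with the inverse limit appearing in the coefficients, so already on the $E_1$-page $\phi$ is a uniform $F$-isomorphism in each column.

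The hard part is to promote this column-wise $E_1$-statement to the abutment. For injectivity up to nilpotents, a class $\alpha\in H^*(G,\F)$ whose restriction to every elementary abelian $p$-subgroup vanishes restricts to a nilpotent class in each $G_\sigma$ with a uniform exponent, so some $\alpha^{p^n}$ has zero image in every $E_\infty^{p,q}$; as only columns with $p\leq d$ are nonzero, a further $(d+1)$-fold multiplicative step forces $\alpha^{p^{n'}}=0$ for some $n'$. For surjectivity up to $p$-powers, a coherent family $(\xi_E)$ in the inverse limit is realized at the $E_1$-level, after raising to a uniform $p$-power, in the zeroth column of the source spectral sequence; the obstructions to it being a permanent cycle live in a bounded range of columns $r\leq d$ and are each annihilated by raising to a further bounded $p$-power, using the column-wise uniform $F$-isomorphism already established. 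The main obstacle will be precisely this obstruction analysis, namely threading a single uniform $p$-power exponent through the differentials $d_r$ for $r\leq d$, but the finiteness of $d$ and the uniformity of Quillen's constants across finite subgroups of order at most $M$ together make it possible.
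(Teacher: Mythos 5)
First, note that the paper does not give its own proof of this proposition: it is quoted verbatim as Theorem A.4 of Henn's paper \cite{Hennpaper} and used as a black box. So there is nothing in the paper to compare line by line; the relevant comparison is with Henn's argument, which your outline does in fact track in broad strokes — the isotropy (equivariant cohomology) spectral sequence of the $G$-CW-complex $X$, with Quillen stratification applied to the finite cell stabilizers and the uniform bounds coming from $\dim X < \infty$ and the bound on $|G_\sigma|$. The construction of $\phi$ and the nilpotence (injectivity) half are essentially fine: an $\alpha$ killed by all restrictions to elementary abelians has nilpotent image in each $H^*(G_\sigma,\F)$ with uniform exponent $p^n$, so $\alpha^{p^n}$ dies on the edge column $E_\infty^{0,*}$ and therefore lies in filtration $\geq 1$; since the filtration has length $d+1$, a further power of $\alpha^{p^n}$ vanishes, and one can take that further power to be a $p$-power. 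The ``zero image in \emph{every} $E_\infty^{p,q}$'' clause is not what actually follows, but the filtration-shift argument you sketch immediately afterwards is the correct one.

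The genuine gap is in the surjectivity half, in the sentence claiming the obstructions to $(\eta_\sigma)$ being a permanent cycle ``are each annihilated by raising to a further bounded $p$-power, using the column-wise uniform $F$-isomorphism already established.'' The column-wise uniform $F$-isomorphisms compare $H^*(G_\sigma,\F)$ with its Quillen limit; they say nothing about the differentials $d_r$ of the isotropy spectral sequence, and there is no reason for a power of $\eta$ dictated by Quillen's constants to become a $d_r$-cycle. The mechanism that actually kills the obstructions is the Leibniz rule in characteristic $p$: in a multiplicative spectral sequence over $\F$ one has $d_r(\eta^p)=p\,\eta^{p-1}d_r(\eta)=0$, so a single extra $p$-th power promotes any $E_r$-class to an $E_{r+1}$-class, and $d\leq\dim X$ extra $p$-th powers produce a permanent cycle. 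This is the step your proof must supply (it also, as a bonus, removes any need to worry separately about the family $(\eta_\sigma)$ being a $d_1$-cocycle in $E_1^{0,*}$, since $(\eta_\sigma^p)$ automatically is). A related but smaller point: the auxiliary ``analogous spectral sequence with the inverse limit in the coefficients'' is not needed and is not clearly well-defined as a spectral sequence converging to $\operatorname{lim}_{\mathcal{A}_p(G)^{\operatorname{op}}}H^*(E,\F)$; you can carry out the whole argument inside the single isotropy spectral sequence for $X$, comparing $\phi$ directly against the edge map $H^*(G,\F)\to E_\infty^{0,*}\subseteq\prod_\sigma H^*(G_\sigma,\F)$.
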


Finally, we can prove the following proposition, which is a
generalization of a result of Henn (Theorem A.$8$ in
\cite{Hennpaper}):

\begin{prop}
  Let $G$ be a group of finite virtual cohomological dimension with cohomology almost everywhere finitary over $\F$. Then
  $G$ has finitely many conjugacy classes of elementary abelian
  $p$-subgroups.
\end{prop}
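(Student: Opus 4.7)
The plan is to combine the uniform $F$-isomorphism supplied by Proposition~\ref{A4 prop} with an explicit construction of many linearly independent classes, built from top Dickson invariants, inside the relevant inverse limit. Since $G$ has finite virtual cohomological dimension, Lemma~\ref{finite vcd lemma} gives a finite-dimensional model for $\eg$, which is a finite-dimensional contractible $G$-CW-complex with finite cell stabilizers of bounded order. Proposition~\ref{A4 prop} then yields a uniform $F$-isomorphism $\phi\colon H^*(G,\F) \to L := \operatorname{lim}_{\mathcal{A}_p(G)^{\operatorname{op}}} H^*(E,\F)$ with some constant $n_0$. In parallel, the hypothesis that $G$ has cohomology almost everywhere finitary over $\F$ combined with Lemma~\ref{f.d. vect sp lemma} tells us that $\dim_{\F} H^n(G,\F) < \infty$ for all sufficiently large $n$.

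Arguing by contradiction, suppose $G$ has infinitely many conjugacy classes of elementary abelian $p$-subgroups. Finite vcd bounds the ranks of all elementary abelian $p$-subgroups, so there is a largest $r \geq 1$ such that $G$ has infinitely many conjugacy classes of rank-$r$ elementary abelian $p$-subgroups. By maximality of $r$ there are only finitely many conjugacy classes of elementary abelian $p$-subgroups of rank greater than $r$, and each such class contains only finitely many rank-$r$ subgroups up to $G$-conjugation; hence infinitely many rank-$r$ conjugacy classes $[E_1],[E_2],\dots$ exist with the property that each $E_i$ is a \emph{maximal} elementary abelian $p$-subgroup of $G$.

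For each $i$ and each $m \geq 1$ I would construct an element $\alpha^{i,m} \in L$ as follows. Let $c_E \in H^d(E,\F)^{GL(E)}$ denote the top Dickson invariant of $E$ (here $d = 2(p^r-1)$ for $p$ odd and $d = 2^r - 1$ for $p=2$), a non-nilpotent class whose restriction to every proper subgroup of $E$ vanishes. Define
\[
\alpha^{i,m}_E = \begin{cases} c_E^m & \text{if } E \text{ is } G\text{-conjugate to } E_i,\\ 0 & \text{otherwise}.\end{cases}
\]
Checking that $(\alpha^{i,m}_E)_E$ is a compatible system for the morphisms of $\mathcal{A}_p(G)^{\operatorname{op}}$ uses three ingredients: the $GL(E)$-invariance of $c_E$ (so the definition is intrinsic to $E$ and in particular $W(E)$-invariant); the vanishing of $c_E$ under restriction to proper subgroups (to handle the restriction morphisms from $E_i$ to its proper subgroups); and the maximality of each $E_i$ (so that no morphism $c_g\colon E \to E'$ in $\mathcal{A}_p(G)$ with $gEg^{-1}\subsetneq E'$ has $gEg^{-1}$ conjugate to $E_i$). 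This last point is the main technical obstacle, and it is precisely why the refinement singling out the maximal $E_i$ in the previous paragraph was needed: without it, the value of $\alpha^{i,m}$ at a rank-$r$ subgroup contained in a higher-rank elementary abelian $p$-subgroup would have to be simultaneously nonzero and a restriction from a class we have forced to be zero.

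To finish, observe that for each fixed $m$ the elements $\alpha^{i,m}$ are linearly independent in $L^{dm}$ because their components at the representatives $E_j$ separate them. Since $L$ is a ring under componentwise multiplication, $\alpha^{i,m_1+m_2} = \alpha^{i,m_1}\alpha^{i,m_2}$, so $\alpha^{i,lp^{n_0}} = (\alpha^{i,l})^{p^{n_0}}$ for every $l \geq 1$, and the uniform $F$-isomorphism property forces each $\alpha^{i,lp^{n_0}}$ to lie in the image of $\phi$. Pulling back then produces an infinite linearly independent family in $H^{dlp^{n_0}}(G,\F)$, so this cohomology group is infinite-dimensional. As $l$ can be taken arbitrarily large, we obtain infinite-dimensionality in arbitrarily high degrees, contradicting the finite-dimensionality established in the first paragraph.
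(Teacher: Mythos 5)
Your proof is correct and follows essentially the same route as the paper's: both invoke Henn's uniform $F$-isomorphism via Proposition~\ref{A4 prop}, reduce to infinitely many conjugacy classes of maximal rank-$r$ elementary abelian $p$-subgroups, manufacture infinitely many linearly independent non-nilpotent classes in the inverse limit, raise to $p$-powers, and contradict Lemma~\ref{f.d. vect sp lemma}. The one difference is that the paper defers the construction of the compatible systems to Henn's Theorem~A.8, whereas you spell it out explicitly (top Dickson invariants supported on a single conjugacy class of maximal $E_i$, with the maximality used to verify compatibility under the morphisms of $\mathcal{A}_p(G)$); that is a welcome amplification rather than a new method.
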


\begin{proof}
  As $G$ has finite virtual cohomological dimension, there is a
  finite dimensional model, say $X$, for the classifying space
  $\eg$ for proper actions (Exercise
  \S VIII.$3$ in \cite{Brown}). Thus, $X$ is a finite dimensional
  contractible $G$-CW-complex with all cell stabilizers finite of
  bounded order, so it follows from
  Proposition \ref{A4 prop} that there is a uniform
  $F$-isomorphism $$\phi:H^*(G,\F)\ra\operatorname{lim}_{\mathcal{A}_p(G)^{\operatorname{op}}}
  H^*(E,\F).$$   Now assume that there are infinitely many conjugacy classes of
  elementary abelian $p$-subgroups of $G$. As the order of
  the finite subgroups is bounded, this means that there must be
  infinitely many maximal elementary abelian $p$-subgroups of $G$
  of the same rank $k$ (although $k$ itself need not necessarily
  be maximal). Following Henn's argument, we can use this fact to
  construct infinitely many linearly independent non-nilpotent
  classes in the inverse limit in some degree (for the details, see the proof of Theorem A.$8$ in \cite{Hennpaper}).
  Now, raising these to a large enough power and using the fact
  that $\phi$ is a uniform $F$-isomorphism, we see that
  $H^*(G,\F)$ is infinite-dimensional as an $\F$-vector space in
  some degree $m$ such that $H^m(G,-)$ is finitary over $\F$. This
  gives a contradiction to Lemma \ref{f.d. vect sp lemma}.

\end{proof}

\subsection{Proof of (ii) $\Rightarrow$ (iii)}$ $

This is immediate.

\subsection{Proof of (iii) $\Rightarrow$ (i)}$ $

Let $G$ be a group of finite virtual cohomological dimension, such
that $G$ has finitely many conjugacy classes of elementary abelian
$p$-subgroups and the normalizer of every non-trivial elementary
abelian $p$-subgroup of $G$ has cohomology almost everywhere
finitary over $\F$. We shall show that $G$ has cohomology almost
everywhere finitary over $\F$.

Firstly, let $\mathcal{A}_p(G)$ denote the poset of all the
non-trivial elementary abelian $p$-subgroups of $G$, and let
$\mathcal{S}_p(G)$ denote the poset of all the non-trivial finite
$p$-subgroups of $G$. We see from Remark 2.3(i) in
\cite{WebbPosets} that the inclusion of posets
$\mathcal{A}_p(G)\hookrightarrow\mathcal{S}_p(G)$ induces a
$G$-homotopy equivalence
$$|\mathcal{A}_p(G)|\simeq_G|\mathcal{S}_p(G)|$$
between the $G$-simplicial complexes.

Next, we need the following result (Proposition $2.7$ \S II in
\cite{Bredonbook}):

\begin{prop}\label{Bredon prop}

Let $X$ and $Y$ be $G$-CW-complexes, and $\phi:X\ra Y$ be a
$G$-equivariant cellular map. Then $\phi$ is a $G$-homotopy
equivalence if and only if $\phi^H:X^H\ra Y^H$ is a homotopy
equivalence for all subgroups $H$ of $G$.

\end{prop}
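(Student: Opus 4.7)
My plan is to prove the two implications separately. The forward direction is a formal consequence of functoriality: taking $H$-fixed points is a functor on topological spaces and continuous maps, and it carries a $G$-equivariant homotopy $X \times I \to Y$ (with trivial action on $I$) to an ordinary homotopy $X^H \times I \to Y^H$. Hence any $G$-homotopy inverse $\psi$ of $\phi$ restricts to a genuine homotopy inverse $\psi^H$ of $\phi^H$ at every subgroup $H$.

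The converse is the substantive content and is essentially the equivariant Whitehead theorem. I would build a $G$-equivariant homotopy inverse $\psi : Y \to X$ by induction on the equivariant skeletal filtration of $Y$. A typical equivariant cell has the form $G/H \times D^n$, attached via a $G$-map $G/H \times S^{n-1} \to Y^{(n-1)}$, and any $G$-equivariant map out of $G/H \times D^n$ is determined by its restriction to $\{eH\} \times D^n$, which necessarily takes values in the $H$-fixed set of the target. Consequently, the problem of extending $\psi$ equivariantly across such a cell reduces to an ordinary cellular extension problem in $X^H$; analogously, constructing the required $G$-homotopies $\phi\psi \simeq_G \operatorname{id}_Y$ and $\psi\phi \simeq_G \operatorname{id}_X$ reduces, via the product $G$-CW structure on $Y \times I$ and $X \times I$, to ordinary homotopy problems in the fixed-point CW-complexes $X^H$ and $Y^H$ for the various stabilizers $H$ that occur.

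At each inductive step, the obstruction to extending or to homotoping lives in a relative homotopy group of the map $\phi^H$, all of which vanish by the hypothesis that each $\phi^H$ is a homotopy equivalence. Ordinary Whitehead/obstruction theory on $X^H$ and $Y^H$ then allows the extension to proceed, and the inductive step assembles into the desired equivariant homotopy inverse. The principal technical obstacle, and the one place where genuine care is needed, is to verify that the fixed-point subspaces $X^H$ and $Y^H$ inherit CW-structures whose skeleta are compatible with the equivariant skeletal filtrations of $X$ and $Y$, so that the cell-by-cell argument can genuinely proceed orbit-type by orbit-type and so that ordinary Whitehead theory is applicable on the nose. This compatibility is a standard structural property of $G$-CW-complexes, and once it is granted the induction is routine.
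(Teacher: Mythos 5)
The paper offers no proof of this statement; it is quoted verbatim as Proposition 2.7 of Chapter II in Bredon's book \cite{Bredonbook}, so there is no argument in the paper for you to match. Your sketch is a correct reconstruction of the standard proof (this is the equivariant Whitehead theorem), and the two central ideas are in place: the forward direction follows because taking $H$-fixed points is a functor that carries a $G$-homotopy inverse of $\phi$ to an ordinary homotopy inverse of $\phi^H$; the converse proceeds by induction over the equivariant cellular filtration, using the adjunction that identifies $G$-maps out of $G/H\times D^n$ with ordinary maps out of $D^n$ landing in the $H$-fixed subspace of the target. One refinement worth making explicit: as phrased, you propose first to build $\psi$ cell by cell and then, separately, to build the homotopies $\phi\psi\simeq_G\operatorname{id}_Y$ and $\psi\phi\simeq_G\operatorname{id}_X$. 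Carried out naively this runs into bookkeeping problems, because a cell-by-cell choice of $\psi$ need not admit the second homotopy afterwards. The clean packaging is to prove an equivariant Homotopy Extension Lifting Property from the hypothesis (this is exactly where the relative homotopy groups you mention are used), deduce that $\phi_*\colon[Z,X]_G\to[Z,Y]_G$ is a bijection for every $G$-CW complex $Z$, and then take $Z=Y$ to produce a right $G$-homotopy inverse $\psi$ and use injectivity at $Z=X$ to upgrade it to a two-sided inverse. Your final point, that the $H$-fixed subspace of a $G$-CW complex is itself a CW complex whose cells come from those equivariant cells $G/K\times D^n$ with $H$ subconjugate to $K$, is precisely the structural fact needed for the induction to proceed orbit-type by orbit-type, and is correct.
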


We can now prove the following key lemma:

\begin{lemma}\label{el ab contr lemma}

The complex $|\mathcal{A}_p(G)|^E$ is contractible for all
$E\in\mathcal{A}_p(G)$.

\end{lemma}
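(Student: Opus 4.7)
The plan is to reduce contractibility of $|\mathcal{A}_p(G)|^E$ to that of the fixed-point complex $|\mathcal{S}_p(G)|^E$, and then contract the latter via a standard conical argument onto the vertex $E$.

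For the reduction, the inclusion of posets $\mathcal{A}_p(G)\hookrightarrow\mathcal{S}_p(G)$ is a $G$-equivariant cellular map, and it was just recalled from \cite{WebbPosets} that this is a $G$-homotopy equivalence. Applying Proposition \ref{Bredon prop} to the subgroup $E\leq G$ upgrades this to an ordinary homotopy equivalence
$$|\mathcal{A}_p(G)|^E\simeq|\mathcal{S}_p(G)|^E,$$
so it suffices to prove that $|\mathcal{S}_p(G)|^E$ is contractible. Since the simplicial $G$-action on $|\mathcal{S}_p(G)|$ is induced from an order-preserving poset action, a chain $P_0<\cdots<P_n$ is fixed by $E$ precisely when $E$ normalizes each vertex $P_i$, so $|\mathcal{S}_p(G)|^E$ is the order complex of the subposet
$$\mathcal{S}_p(G)^E=\{P\in\mathcal{S}_p(G):E\leq N_G(P)\}.$$

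For the contraction, let $P\in\mathcal{S}_p(G)^E$. Because $E$ normalizes $P$, the product $EP$ is a finite $p$-subgroup of $G$; since $E\leq EP$ it is automatically normalized by $E$, so $EP\in\mathcal{S}_p(G)^E$. The assignment $P\mapsto EP$ is order-preserving, and for every $P$ we have the pair of inequalities $P\leq EP\geq E$. By the standard poset-theoretic lemma that order-preserving self-maps related pointwise by $\leq$ induce simplicially homotopic maps on the order complex, these inequalities yield a simplicial homotopy from the identity through $P\mapsto EP$ to the constant map at $E$, contracting $|\mathcal{S}_p(G)|^E$ to the vertex $E$. The only step meriting a moment of attention is the identification of the $E$-fixed subcomplex with the order complex of $\mathcal{S}_p(G)^E$, which is routine for order-preserving poset actions; the rest of the argument is formal and essentially Quillen's classical conical contraction.
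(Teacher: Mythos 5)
Your proof takes essentially the same route as the paper's: reduce to $|\mathcal{S}_p(G)|^E$ via the $G$-homotopy equivalence from \cite{WebbPosets} together with Proposition \ref{Bredon prop}, then contract conically via $P\mapsto EP$ using $P\leq EP\geq E$, exactly as in Quillen and in Lemma 2.1 of \cite{Deltapaper}. One small slip in your justification: the claim that $EP$ is normalized by $E$ ``since $E\leq EP$'' does not follow — containment of $E$ in a subgroup does not make $E$ normalize that subgroup. The correct (and equally immediate) reason is that $E$ normalizes both $E$ and $P$, hence normalizes their product $EP$; with that fixed the argument is complete.
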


\begin{proof}

We follow an argument similar to the proof of Lemma $2.1$ in
\cite{Deltapaper}:

If $H\in\mathcal{S}_p(G)^E$, then $EH$ is a $p$-subgroup of $G$.
We can therefore define a function
$$f:\mathcal{S}_p(G)^E\ra\mathcal{S}_p(G)^E$$ by $f(H)=EH$, so for
all $H\in\mathcal{S}_p(G)^E$ we have: $$H\leq f(H)\geq E.$$ We
then see that $\mathcal{S}_p(G)^E$ is conically contractible in
the sense of Quillen (see \S $1.5$ in \cite{Quillenpaper}), which
implies that $|\mathcal{S}_p(G)^E|$ is contractible by Quillen's
argument. Finally, by Proposition \ref{Bredon prop}, we see that
$$|\mathcal{A}_p(G)|^E\simeq
  |\mathcal{S}_p(G)|^E=|\mathcal{S}_p(G)^E|,$$ and the result now follows.

\end{proof}

Finally, we can now prove the following:

\begin{theorem}
  Let $G$ be a group of finite virtual cohomological dimension. If
  $G$ has finitely many conjugacy classes of elementary abelian
  $p$-subgroups, and the normalizer of every non-trivial
  elementary abelian $p$-subgroup of $G$ has cohomology almost
  everywhere finitary over $\F$, then $G$ has cohomology almost
  everywhere finitary over $\F$.
\end{theorem}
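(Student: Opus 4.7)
The plan is to adapt the proof of Theorem \ref{finitary nilpotent result} to the $\F$-coefficient setting, replacing the poset $\Lambda(G)$ of all non-trivial finite subgroups with the poset $\mathcal{A}_p(G)$ of non-trivial elementary abelian $p$-subgroups. Since $G$ has finite virtual cohomological dimension, the ranks of its elementary abelian $p$-subgroups are bounded, so $X := |\mathcal{A}_p(G)|$ is a finite-dimensional $G$-simplicial complex, say of dimension $r$. The conical contraction $f(H) = PH$ used in Lemma \ref{el ab contr lemma} makes sense whenever $P$ is a non-trivial finite $p$-subgroup, not merely an elementary abelian one, so the same argument gives $X^P$ contractible for every non-trivial finite $p$-subgroup $P$ of $G$.

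Using Proposition \ref{n-1 connected Y} I would embed $X$ in an $r$-dimensional $(r-1)$-connected $G$-CW-complex $Y$ on which $G$ acts freely outside $X$, so that $Y^P = X^P$ remains contractible for every non-trivial finite $p$-subgroup $P$. Passing to the augmented $\F$-cellular chain complex
$$0 \to \widetilde{H}_r(Y;\F) \to C_r(Y;\F) \to \cdots \to C_0(Y;\F) \to \F \to 0,$$
the $\F$-analogue of Lemma \ref{exact seq of aef mods lemma} reduces the problem to showing that both $\EXT^*_{\F G}(\widetilde{H}_r(Y;\F),-)$ and each $\EXT^*_{\F G}(C_l(Y;\F),-)$ for $0 \le l \le r$ are finitary in all sufficiently high dimensions.

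For the top homology, the $\F$-coefficient version of Proposition \ref{why is reduced homology projective} combined with the contractibility of $Y^P$ gives that $\widetilde{H}_r(Y;\F)$ is projective over $\F P$ for every finite $p$-subgroup $P$; since projectivity over $\F H$ for a finite group $H$ in characteristic $p$ is detected on Sylow $p$-subgroups, this promotes to $\F H$-projectivity for every finite $H \le G$, and Proposition \ref{finite projective dimension} (with $R=\F$) then yields finite projective dimension over $\F G$, so $\EXT^n_{\F G}(\widetilde{H}_r(Y;\F),-) = 0$ in all large degrees. For the chain groups, the free orbits outside $X$ contribute free $\F G$-summands (harmless for $n \ge 1$), and splitting $\F X_l$ into $G$-orbits of ascending $(l+1)$-chains $E_0 < \cdots < E_l$ gives, via Eckmann--Shapiro,
$$\EXT^n_{\F G}(C_l(Y;\F),-) \cong \prod_{\mathscr{C}} H^n\Bigl(\bigcap_{i=0}^l N_G(E_i),-\Bigr) \quad (n\ge 1),$$
the product running over orbit representatives; the finiteness of the set of conjugacy classes of elementary abelian $p$-subgroups (together with the bound on ranks) makes this product finite.

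The step I expect to be the main obstacle is showing that each flag stabilizer $\bigcap_{i=0}^l N_G(E_i)$ has cohomology almost everywhere finitary over $\F$, since the hypothesis only supplies this for normalizers of single elementary abelian $p$-subgroups. The key observation is that $N_G(E_l)$ acts by conjugation on the finite set of ascending $(l+1)$-chains in the finite $p$-group $E_l$, and the stabilizer of the chain $E_0 < \cdots < E_l$ under this action is exactly $\bigcap_{i=0}^l N_G(E_i)$; hence this intersection has finite index in $N_G(E_l)$, and Lemma \ref{finite index} (whose proof goes through verbatim over $\F$) transports the hypothesis from $N_G(E_l)$ down to the intersection. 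Finally, Lemma \ref{fin functors implies direct sum fin} assembles the finite product into a single finitary functor in all sufficiently high dimensions, and Lemma \ref{exact seq of aef mods lemma} then completes the proof.
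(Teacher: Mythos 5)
Your proposal is correct and follows essentially the same line as the paper's proof: pass from $|\mathcal{A}_p(G)|$ to an $(r-1)$-connected extension $Y$ via Proposition~\ref{n-1 connected Y}, reduce via the $\F$-version of Lemma~\ref{exact seq of aef mods lemma} to the chain modules and top reduced homology, handle the top homology through projectivity over finite subgroups and Proposition~\ref{finite projective dimension}, and handle the chain modules by the orbit decomposition and Eckmann--Shapiro. Two small sub-steps differ from the paper, though neither changes the structure of the argument. For the projectivity of $\widetilde{H}_r(Y)\otimes\F$ over $\F K$ for $K$ finite, you upgrade contractibility of $Y^P$ to all non-trivial finite $p$-subgroups $P$ and then invoke detection of $\F K$-projectivity on a Sylow $p$-subgroup; the paper instead records contractibility only for elementary abelian $E$, deduces $\F E$-projectivity, and quotes Chouinard's Theorem. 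Both are correct, and your route uses a more elementary detection principle at the cost of checking the stronger contractibility statement (which, as you note, follows from the same conical contraction $H\mapsto PH$). For the finite-index claim $\bigl[N_G(E_l):\bigcap_{i=0}^l N_G(E_i)\bigr]<\infty$, you use the conjugation action of $N_G(E_l)$ on the finite set of ascending chains inside $E_l$; the paper gets the same conclusion more directly from the sandwich $C_G(E_l)\leq\bigcap_{i=0}^l N_G(E_i)\leq N_G(E_l)$ together with $\bigl[N_G(E_l):C_G(E_l)\bigr]<\infty$. Either way, Lemma~\ref{finite index} over $\F$ transports the hypothesis from $N_G(E_l)$ to the flag stabilizer and Lemma~\ref{fin functors implies direct sum fin} finishes the argument, exactly as you describe.
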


\begin{proof}
  Let $\mathcal{A}_p(G)$ be the poset of all non-trivial
  elementary abelian $p$-subgroups of $G$, and let
  $|\mathcal{A}_p(G)|$ denote its realization as a $G$-simplicial
  complex. As $G$ has finitely many conjugacy classes of elementary
  abelian $p$-subgroups, there must be a bound on
  their orders, and so $|\mathcal{A}_p(G)|$
  is finite-dimensional, say $\dim|\mathcal{A}_p(G)|=r$. By
  Proposition \ref{n-1 connected Y}, we can embed
  $|\mathcal{A}_p(G)|$ into an $r$-dimensional $G$-CW-complex $Y$
  which is $(r-1)$-connected, such that $G$ acts freely outside
  $|\mathcal{A}_p(G)|$. The augmented cellular chain complex of
  $Y$ then gives the following exact sequence of $\Z G$-modules: $$0\ra\widetilde{H}_r(Y)\ra
  C_r(Y)\ra\cdots\ra C_0(Y)\ra\Z\ra 0,$$ which gives the following exact
  sequence of $\F G$-modules: $$0\ra\widetilde{H}_r(Y)\otimes\F\ra
  C_r(Y)\otimes\F\ra\cdots\ra C_0(Y)\otimes\F\ra\F\ra 0.$$

  In order to show that $G$ has cohomology almost everywhere
  finitary over $\F$, it is enough by an easy generalization of Lemma
  \ref{exact seq of aef mods lemma}
  to show that the functors $\EXT^*_{\F G}(\widetilde{H}_r(Y)\otimes\F,-)$ and $\EXT^*_{\F G}(C_l(Y)\otimes\F,-)$, $0\leq l\leq r$, are finitary in all
  sufficiently high dimensions.

  Firstly, notice that for every
  $E\in\mathcal{A}_p(G)$, $Y^E=|\mathcal{A}_p(G)|^E$, and hence is contractible, as the
  copies of $G$ we have added in the construction of $Y$ have free orbits, and so have no fixed points under $E$.
  Therefore, an easy generalization of Proposition \ref{why is reduced homology projective} shows that
  $\widetilde{H}_r(Y)\otimes\F$ is projective as an $\F E$-module for all
  elementary abelian $p$-subgroups $E$ of $G$.

  Let $K$ be a finite subgroup of $G$, so $\widetilde{H}_r(Y)\otimes\F$
  restricted to $K$ is an $\F K$-module with the property
  that its restriction to every elementary abelian $p$-subgroup of
  $K$ is projective. It then follows from Chouinard's Theorem \cite{Chouinardpaper} that
  $\widetilde{H}_r(Y)\otimes\F$ is projective as an $\F K$-module. As this
  holds for every finite subgroup $K$ of $G$, it then follows from Proposition \ref{finite projective dimension} that
  $\widetilde{H}_r(Y)\otimes\F$ has finite projective dimension over $\F
  G$. Hence $\EXT^n_{\F G}(\widetilde{H}_r(Y)\otimes\F,-)=0$, and thus
  is finitary, for all sufficiently large $n$.

  Next, for each $0\leq l\leq r$, consider the functor $\EXT^*_{\F
  G}(C_l(Y)\otimes\F,-)$. Provided that $n\geq 1$, we see that
  $$\begin{array}{rcl}
  \EXT^n_{\F G}(C_l(Y)\otimes\F,-) & \cong & \EXT^n_{\F G}(C_l(|\mathcal{A}_p(G)|)\otimes\F,-) \\
  \blah & \cong & \EXT^n_{\F G}(\F|\mathcal{A}_p(G)|_l,-), \\
  \end{array}$$ where $|\mathcal{A}_p(G)|_l$ consists of all the $l$-simplicies
  $$E_0<E_1<\cdots <E_l$$ in $|\mathcal{A}_p(G)|$. As $G$ acts on $|\mathcal{A}_p(G)|_l$, we can therefore split
  $|\mathcal{A}_p(G)|_l$ up into its $G$-orbits, where the
  stabilizer of such a simplex is $\bigcap_{i=0}^l N_G(E_i)$. We
  then obtain the following isomorphism: $$\begin{array}{lcl}
  \EXT^n_{\F G}(\F|\mathcal{A}_p(G)|_l,-) & \cong & \EXT^n_{\F
  G}(\F[\coprod_{\mathscr{C}} \bigcap_{i=0}^l N_G(E_i)\backslash
  G],-) \\ \blah & \cong & \prod_{\mathscr{C}}\EXT^n_{\F G}(\F[\bigcap_{i=0}^l N_G(E_i)\backslash G],-) \\
  \blah & \cong & \prod_{\mathscr{C}} H^n(\bigcap_{i=0}^l N_G(E_i),-), \\
  \end{array}$$ where the product is taken over a set $\mathscr{C}$
  of representatives of conjugacy classes of non-trivial elementary abelian
  $p$-subgroups of $G$. As we are assuming that $G$ has only
  finitely many such conjugacy classes, this product is finite.

  Now, for each $l$-simplex $E_0<E_1<\cdots<E_l$ we have
  $$C_G(E_l)\leq\bigcap_{i=0}^l N_G(E_i)\leq N_G(E_l),$$ and so
  $$|N_G(E_l):\bigcap_{i=0}^l N_G(E_i)|<\infty.$$ Then, as $N_G(E_l)$ has cohomology almost everywhere finitary over $\F$, we
  see from
  an easy generalization of Lemma \ref{finite index} that $\bigcap_{i=0}^l N_G(E_i)$ has
  cohomology almost everywhere finitary over $\F$, and so for all
  sufficiently large $n$, $H^n(\bigcap_{i=0}^l N_G(E_i),-)$ is
  finitary over $\F$. Therefore, for all sufficiently large $n$, $\EXT^n_{\F G}(C_l(Y)\otimes\F,-)$ is isomorphic to a finite product of finitary functors, and
  hence is finitary, which completes the proof.
\end{proof}

\section{A Question of Leary and Nucinkis}\label{sec:L and N}

In \cite{LearyNucinkis}, Leary and Nucinkis posed the following
question: If $G$ is a group of type $\vfp$ over $\F$, and $P$ is a
$p$-subgroup of $G$, is the centralizer $C_G(P)$ of $P$
necessarily of type $\vfp$ over $\F$?

In this section, we shall give a positive answer to this question.
Firstly, recall (see \S $2$ of \cite{LearyNucinkis}) that a group
$G$ is said to be of type $\vfp$ over $\F$ if and only if it has a
subgroup of finite index which is of type $\fp$ over $\F$.

\begin{prop}\label{VFP implies fd proper space prop}
  Let $G$ be a group which has a subgroup $H$ of finite index with $\cd_{\F} H<\infty$.
  Then there exists a finite dimensional
  $G$-CW-complex $X$ with finite cell stabilizers such that $$0\ra C_r(X)\otimes\F\ra\cdots\ra
  C_0(X)\otimes\F\ra\F\ra 0$$ is an exact sequence of $\F
  G$-modules.
\end{prop}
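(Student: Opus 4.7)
My plan is to adapt the classical construction of a finite-dimensional model for $\eg$ when $G$ has finite virtual cohomological dimension over $\Z$ (Exercise §VIII.3 in \cite{Brown}), replacing $\Z$-coefficients throughout by $\F$-coefficients.

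First I would reduce to the case where $H$ is normal in $G$. Replacing $H$ by its normal core $N=\bigcap_{g\in G}gHg^{-1}$, which is still a finite-index subgroup of $G$ and is contained in $H$ and so satisfies $\cd_\F N\leq\cd_\F H<\infty$, I may assume that $H$ is a normal subgroup of $G$ with finite quotient $Q=G/H$. Set $d=\cd_\F H$. The hypothesis forces $H$ to contain no element of order $p$, so every finite subgroup of $H$ has order coprime to $p$, and therefore every permutation $\F H$-module $\F[H/K]$ with $K\leq H$ finite is $\F H$-projective.

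The next and principal step is to construct a finite-dimensional $H$-CW-complex $Y$ with finite cell stabilizers whose augmented $\F$-chain complex is exact. Starting from any model of $\eh$ (a contractible proper $H$-CW-complex, which always exists but may have infinite dimension), each chain module $C_n(\eh)\otimes\F$ is a sum of permutation modules of the form $\F[H/K]$ with $K$ finite of order coprime to $p$, and is therefore $\F H$-projective. Thus $C_*(\eh)\otimes\F\to\F\to 0$ is a projective $\F H$-resolution of $\F$, and since $\cd_\F H=d$ the $d$-th kernel is itself $\F H$-projective. I would then realize this algebraic truncation geometrically: take the $d$-skeleton of $\eh$ and attach equivariant cells in dimensions $d$ and $d+1$ (whose stabilizers can be chosen finite, since the projective module to be killed is a summand of a sum of permutation modules with finite stabilizers) so as to kill the top-dimensional $\F$-homology, yielding $Y$ of dimension at most $d+1$ with finite cell stabilizers and $C_*(Y)\otimes\F$ exact.

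Finally, to pass from $Y$ to $X$, I would form the induced $G$-CW-complex $G\times_H Y$. Its cell stabilizers are conjugate in $G$ to $H$-stabilizers of cells of $Y$, hence finite, but its augmented $\F$-chain complex has $H_0$ isomorphic to $\F[G/H]$ rather than $\F$. Since $Q$ is finite the augmentation ideal $\ker(\F[G/H]\to\F)$ is a finite-dimensional $\F G$-module, and I would kill it by attaching a finite collection of free $G$-cells in a bounded range of dimensions---first connecting the $[G:H]$ homotopy components by free $G$-$1$-cells, and then inductively killing the finitely generated higher $\F$-homology that this introduces. The main obstacle is the geometric realization in the previous paragraph: attaching equivariant cells to kill the top projective $\F H$-module while preserving finiteness of stabilizers is delicate because the module in question need not be a permutation module. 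I expect to handle this by exploiting that it is a direct summand of a sum of permutation modules, and by allowing the dimension of $Y$ to exceed $d$ by a bounded amount---the statement requires only finite-dimensionality, not an optimal bound.
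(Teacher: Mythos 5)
Your strategy diverges from the paper's in a way that introduces a gap which cannot be repaired.

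The paper's proof constructs a finite-dimensional \emph{free} $H$-CW-complex $X'$ with $\widetilde{C}_*(X')\otimes\F$ exact (a generalization of Brown's Theorem VIII.7.1), and then takes the coinduced complex $X:=\Hom_H(G,X')$, which as a space is a product of $[G:H]$ copies of $X'$. By K\"unneth over the field $\F$, this product is $\F$-acyclic; it is finite-dimensional; and its cell stabilizers meet $H$ trivially (since $H$ acts freely on $X'$) and so are finite. This is precisely the $\F$-analogue of Brown's Theorem VIII.3.1.

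You instead form the \emph{induced} complex $G\times_H Y$ and then attempt to repair the homology by attaching finitely many free $G$-cells. This cannot work when $p$ divides $[G:H]$. Every cell stabilizer of $Y$ has order prime to $p$ (because $H$, having $\cd_{\F}H<\infty$, is $p$-torsion-free), and all the cells you propose to add are free, so every module $C_k\otimes\F$ in your construction would be $\F G$-projective. If the resulting augmented chain complex were exact and the complex finite-dimensional, you would have produced a finite projective $\F G$-resolution of $\F$, forcing $\cd_{\F}G<\infty$ and hence $G$ to be $p$-torsion-free --- which is not a hypothesis of the Proposition. Concretely, with $G=\Z/p$ and $H=1$ (so $\cd_{\F}H=0$), $Y$ is a point, $G\times_H Y$ is $p$ free points, and no finite sequence of free cell attachments yields a finite-dimensional $\F$-acyclic $G$-CW-complex, because $\F$ has infinite projective dimension over $\F[\Z/p]$; the coinduction $\Hom_H(G,X')$, by contrast, collapses to a single $G$-fixed point, which is exactly what is required. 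The essential feature of the coinduced complex is that it is allowed to (and in general must) have cell stabilizers of order divisible by $p$, so that $C_*(X)\otimes\F$ is not a complex of projectives and imposes no bound on $\cd_{\F}G$. Your normal-core reduction is harmless but unnecessary for the coinduction construction, and your insistence on building $Y$ with merely finite (rather than trivial) $H$-stabilizers is an unneeded complication, since the free Eilenberg--Ganea-type argument already produces the required $X'$.
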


\begin{proof}
  As $\cd_{\F} H<\infty$, it follows from an easy generalization
  of Theorem $7.1$ \S VIII in \cite{Brown} that there exists a finite
  dimensional free $H$-CW-complex $X'$ with the property that
  $\widetilde{C}_*(X')\otimes\F$ is exact. Set $$X:=\Hom_H(G,X').$$ An easy generalization of the proof of
  Theorem $3.1$ \S VIII in \cite{Brown} then shows that $X$ has
  the required properties.
\end{proof}

Next, we prove the following key lemma, which is a variation on
Proposition \ref{finite projective dimension}:

\begin{lemma}\label{proj on rest to fin subgps lemma}
  Let $G$ be a group of type $\vfp$ over $\F$, and $M$ be an $\F
  G$-module. If $M$ is projective as an $\F K$-module for
  all finite subgroups $K$ of $G$, then $M$ has finite projective
  dimension over $\F G$.
\end{lemma}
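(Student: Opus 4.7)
The plan is to mimic the proof of Proposition \ref{finite projective dimension} (the Cornick--Kropholler theorem), replacing the finite-dimensional model for $\eg$ used there by the finite-dimensional proper $G$-CW-complex supplied by Proposition \ref{VFP implies fd proper space prop}. Note that the hypothesis that $G$ is of type $\vfp$ over $\F$ produces a finite-index subgroup $H$ of type $\fp$ over $\F$, and hence of finite $\F$-cohomological dimension, which is precisely what is required to apply that proposition.

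First I would use Proposition \ref{VFP implies fd proper space prop} to obtain a finite-dimensional $G$-CW-complex $X$ (say $\dim X=r$) with finite cell stabilizers, whose augmented cellular chain complex over $\F$
$$0\ra C_r(X)\otimes\F\ra\cdots\ra C_0(X)\otimes\F\ra\F\ra 0$$
is exact as a sequence of $\F G$-modules. Decomposing each cellular chain module into $G$-orbits of cells, $C_i(X)\otimes\F$ is a direct sum of permutation modules of the form $\F[G/K_\alpha]$ where the $K_\alpha$ are finite subgroups of $G$.

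Next I would tensor the above exact sequence over $\F$ with $M$, giving the tensor products the diagonal $G$-action. Because $\F$ is a field every $\F$-vector space is flat, so the resulting sequence
$$0\ra C_r(X)\otimes_{\F}M\ra\cdots\ra C_0(X)\otimes_{\F}M\ra M\ra 0$$
remains exact. It therefore suffices to prove that each $C_i(X)\otimes_\F M$ is projective as an $\F G$-module: then the above will be a finite projective resolution of $M$, and $\pd_{\F G}M\leq r$.

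For this key step I would invoke the standard isomorphism of $\F G$-modules
$$\F[G/K]\otimes_{\F}M\iso\F G\otimes_{\F K}M$$
(sending $gK\otimes m$ to $g\otimes g^{-1}m$, with $M$ restricted to $K$ on the right and $G$ acting on the left factor), valid for any subgroup $K$. By hypothesis $M$ restricted to each finite subgroup $K_\alpha$ is $\F K_\alpha$-projective, hence a direct summand of a free $\F K_\alpha$-module; inducing up to $G$ shows that $\F G\otimes_{\F K_\alpha}M$ is a direct summand of a free $\F G$-module and is therefore projective. Since direct sums of projectives are projective, each $C_i(X)\otimes_\F M$ is $\F G$-projective, completing the argument. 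The only real obstacle is conceptual: one must recognise that the proper-action complex from Proposition \ref{VFP implies fd proper space prop} plays in the $\vfp$-over-$\F$ setting exactly the role that a finite-dimensional $\eg$ plays in Proposition \ref{finite projective dimension}, after which the diagonal-tensor-product trick goes through formally.
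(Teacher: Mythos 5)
Your proposal is correct and follows the same route as the paper: produce the finite-dimensional proper $G$-CW-complex from Proposition \ref{VFP implies fd proper space prop}, tensor its exact augmented cellular chain complex over $\F$ with $M$ using the diagonal action, identify each term as a direct sum of modules induced from the finite cell stabilizers, and conclude projectivity from the hypothesis on restrictions to finite subgroups. The only cosmetic difference is that you spell out the isomorphism $\F[G/K]\otimes_\F M\cong \F G\otimes_{\F K}M$ and the flatness-over-a-field observation, which the paper leaves implicit.
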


\begin{proof}
  As $G$ is of type $\vfp$ over $\F$, we see from Proposition \ref{VFP implies fd proper space prop}
  that there exists a finite-dimensional $G$-CW-complex $X$ with finite cell stabilizers, such that $$0\ra C_r(X)\otimes\F\ra\cdots\ra
  C_0(X)\otimes\F\ra\F\ra 0$$ is exact. Now, for each $k$, $C_k(X)$ is
  a permutation module,
  $$C_k(X)\cong\bigoplus_{\sigma\in\Sigma_k}\Z[G_{\sigma}\backslash G],$$
  where $\Sigma_k$ is a set of $G$-orbit representatives of
  $k$-cells in $X$, and $G_{\sigma}$ is the stabilizer of
  $\sigma$. If we tensor the above exact sequence with $M$, then we
  obtain the following: $$0\ra M\otimes_{\F}(C_r(X)\otimes\F)\ra\cdots\ra
  M\otimes_{\F}(C_0(X)\otimes\F)\ra M\ra 0,$$ where, for each $k$,
  we have $$M\otimes_{\F} (C_k(X)\otimes\F) \cong  \bigoplus_{\sigma\in\Sigma_k} M\otimes_{\F G_{\sigma}}\F
  G.$$ Now, as $M$ is projective as an $\F G_{\sigma}$-module, we see that $M\otimes_{\F G_{\sigma}}\F G$
  is projective as an $\F G$-module. Therefore, $M\otimes_{\F}
  (C_k(X)\otimes\F)$ is projective as an $\F G$-module, and so the
  above exact sequence is a projective resolution of $M$, and we
  then conclude that $M$ has finite projective dimension over $\F
  G$.
\end{proof}

We can now answer Leary and Nucinkis' question in the case where
$P$ has order $p$. This is a variation on Lemma \ref{infinitary
vcd result}:

\begin{prop}\label{order p case prop}
  Let $G$ be a group of type $\vfp$ over $\F$, and let $P$ be a
  subgroup of $G$ of order $p$. Then $C_G(P)$ is of type $\vfp$
  over $\F$.
\end{prop}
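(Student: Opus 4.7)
The plan is to combine Smith theory with the proper-action machinery of Proposition~\ref{VFP implies fd proper space prop}. Since $G$ is of type $\vfp$ over $\F$, fix a finite-index subgroup $M$ of $G$ which is of type $\fp$ over $\F$; in particular $\cd_{\F}M < \infty$, so $M$ has no $p$-torsion. As a preliminary step I would refine Proposition~\ref{VFP implies fd proper space prop} to produce a \emph{finite-type}, finite-dimensional $G$-CW-complex $X$ with finite cell stabilizers such that the augmented cellular chain complex
\[
0 \to C_r(X)\otimes\F \to \cdots \to C_0(X)\otimes\F \to \F \to 0
\]
is exact. This refinement is essentially standard: starting from a finite-type free $M$-CW-complex realising a finite projective resolution of $\F$ over $\F M$, one coinduces to $G$ and attaches finitely many extra $G$-orbits of cells to collapse the augmentation from $\F[G/M]$ down to $\F$.

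Next I would pass to $Y := X^P$, the $P$-fixed subcomplex. Since $P$ has order $p$ and acts on the $\F$-acyclic finite-dimensional complex $X$, classical Smith theory forces $Y$ to be $\F$-acyclic. The complex $Y$ inherits a cellular $C_G(P)$-action, with finite cell stabilizers that automatically contain $P$. A short bookkeeping argument, using that each finite subgroup of $G$ contains only finitely many subgroups $G$-conjugate to $P$, shows that $Y$ has only finitely many $C_G(P)$-orbits of cells in each dimension. Consequently we obtain a finite exact sequence of $\F C_G(P)$-modules
\[
0 \to C_r(Y)\otimes\F \to \cdots \to C_0(Y)\otimes\F \to \F \to 0
\]
in which each $C_k(Y)\otimes\F$ is a finite direct sum of permutation modules $\F[C_G(P)/Q]$ for various finite subgroups $Q \leq C_G(P)$ containing $P$.

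Finally, I would set $H := M \cap C_G(P)$, a finite-index subgroup of $C_G(P)$. Because $M$ has no $p$-torsion, neither does $H$. Restricting the exact sequence from $\F C_G(P)$ down to $\F H$, each $\F[C_G(P)/Q]$ decomposes as a finite direct sum of modules $\F[H/(H \cap gQg^{-1})]$ indexed by the $H$--$Q$ double cosets in $C_G(P)$. Every stabilizer $H \cap gQg^{-1}$ is finite of order coprime to $p$, so by Maschke $\F$ is a projective $\F(H \cap gQg^{-1})$-module, and hence $\F[H/(H \cap gQg^{-1})]$ is a finitely generated projective $\F H$-module. This yields a finite projective resolution of $\F$ over $\F H$ by finitely generated projectives, so $H$ is of type $\fp$ over $\F$, and therefore $C_G(P)$ is of type $\vfp$ over $\F$.

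The principal obstacle is the finite-type refinement of Proposition~\ref{VFP implies fd proper space prop}: its bare statement only guarantees finite cell stabilizers, whereas here we need only finitely many $G$-orbits of cells as well. Once the correctly-refined $X$ is in hand, the Smith-theoretic fixed-point computation and the Maschke-based projectivity check are entirely routine.
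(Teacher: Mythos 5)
Your approach is genuinely different from the paper's. The paper works algebraically: it forms the short exact sequence $J\monom\F\mathcal{A}_p(H)\epim\F$ over $\F H$ (with $H=NP$), shows via Chouinard's theorem and Lemma~\ref{proj on rest to fin subgps lemma} that $J$ has finite projective dimension, deduces from a variant of Theorem~\ref{split infinitary result} that $N_H(P)$ has cohomology almost everywhere finitary over $\F$, then uses Lemma~\ref{direct product vcd case} to conclude $C_N(P)$ is $\fpinfty$, and finally upgrades $\fpinfty$ to $\fp$ because $\cd_{\F}C_N(P)\le\cd_{\F}N<\infty$. You instead take a fixed-point complex $X^P$ and argue geometrically via Smith theory and Maschke.

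The gap is in the very first step, which you yourself flag as the principal obstacle but then dismiss as essentially standard: the claimed refinement of Proposition~\ref{VFP implies fd proper space prop} to a \emph{finite-type} model. Type $\fp$ over $\F$ does not imply finite presentability, and the standard inductive cell-attachment construction of a finite-type free $M$-CW-complex from a finite free resolution breaks down precisely there. To attach $n$-cells realizing a given set of $\F M$-module generators of $H_{n-1}$, one needs those classes to lie in the image of the Hurewicz map $\pi_{n-1}\to H_{n-1}$; when the complex is not simply connected (and one cannot force simple connectivity with finitely many $2$-cells unless $M$ is finitely presented), there is no reason the spherical classes generate. So the asserted finite-type, $\F$-acyclic free $M$-CW-complex is not available from the hypotheses, and the rest of the argument — which genuinely needs finitely many $G$-orbits in each dimension so that the restricted modules $\F[H/(H\cap gQg^{-1})]$ form a \emph{finitely generated} projective resolution over $\F H$ — does not get off the ground. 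The paper's argument is engineered exactly to avoid this: Lemma~\ref{proj on rest to fin subgps lemma} needs only finite dimension and finite stabilizers (no bound on the number of orbits), and the step from ``cohomology almost everywhere finitary'' plus $\cd_{\F}<\infty$ to type $\fp$ is purely algebraic.

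If the finite-type refinement were available, the remainder of your argument (Smith theory on $X^P$; the double-coset count showing finitely many $C_G(P)$-orbits of $P$-fixed cells in each $G$-orbit, using that a finite stabilizer contains only finitely many $G_\sigma$-conjugacy classes of subgroups $G$-conjugate to $P$; and the Maschke projectivity of $\F[H/(H\cap gQg^{-1})]$ since $M$, hence $H$, has no $p$-torsion) is correct and would give a clean direct construction of a finite projective resolution over $\F H$. But as written the proof rests on an unproved and nontrivial realizability claim, so it does not establish the proposition.
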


\begin{proof}
  As $G$ is of type $\vfp$ over $\F$, we can choose a normal subgroup $N$
  of finite index which is of type $\fp$ over $\F$. Let $H:=NP$, so $H$ is
  of type $\vfp$ over $\F$.

  Next, let $\mathcal{A}_p(H)$ denote the set of all non-trivial
  elementary abelian $p$-subgroups of $H$, so $\mathcal{A}_p(H)$
  consists of subgroups of order $p$. Now $H$ acts on this set by
  conjugation, so the stabilizer of any $E\in\mathcal{A}_p(H)$ is
  simply $N_H(E)$. Also, for each $E\in\mathcal{A}_p(H)$, we see
  that the set of $E$-fixed points $\mathcal{A}_p(H)^E$ is simply
  the set $\{E\}$. We then have the following short exact sequence of $\F
  H$-modules: $$J\mono\F\mathcal{A}_p(H)\stackrel{\epsilon}{\epi}\F,$$ where
  $\epsilon$ denotes the augmentation map, and we see that for each
  $E\in\mathcal{A}_p(H)$, $J$ is free as an $\F E$-module with
  basis $\{E'-E:E'\in\mathcal{A}_p(H)\}$. Therefore, if $K$ is any
  finite subgroup of $H$, we see that $J$ restricted to $K$ is an
  $\F K$-module such that its restriction to every elementary
  abelian $p$-subgroup of $K$ is free. It then follows from
  Chouinard's Theorem \cite{Chouinardpaper} that $J$ is projective as an $\F K$-module.
  As this holds for every finite subgroup $K$ of $H$, it follows
  from Lemma \ref{proj on rest to fin subgps lemma} that $J$ has
  finite projective dimension over $\F H$.

  An argument similar to the proof of Theorem \ref{split infinitary result}
  then shows that $N_H(P)$ has cohomology almost
  everywhere finitary over $\F$. Hence, $$C_H(P)\cong P\times C_N(P)$$
  has cohomology almost everywhere finitary over $\F$, and by Lemma \ref{direct product vcd
  case}, $C_N(P)$ is of type $\fpinfty$ over $\F$. Finally, as
  $$\cd_{\F} C_N(P)\leq\cd_{\F} N<\infty,$$ we see that $C_N(P)$
  is of type $\fp$ over $\F$, and the result now follows.
\end{proof}

We can now answer Leary and Nucinkis' question. This is a
variation on Theorem \ref{p-subgroup case}:

\begin{theoremE}
  Let $G$ be a group of type $\vfp$ over $\F$, and $P$ be a
  $p$-subgroup of $G$. Then the centralizer $C_G(P)$ of $P$ is
  also of type $\vfp$ over $\F$.
\end{theoremE}

\begin{proof}
  If $P$ is trivial, then the
  result is immediate. Assume, therefore, that $P$ has order
  $p^k$, where $k\geq 1$. We proceed by induction on $k$:

  If $k=1$, then the result follows from Proposition \ref{order p case
  prop}.

  Suppose now that $k\geq 2$. Choose a subgroup
  $E\leq\zeta(P)$ of order $p$. Then $C_G(E)$ is of type $\vfp$ over $\F$ by Proposition \ref{order p case
  prop}, and so $C_G(E)$ has a normal subgroup $N$ of finite index
  which is of type $\fp$ over $\F$. Let $H:=NE$. Then $C_G(E)/E$ has the subgroup
  $H/E$ of finite index, with $H/E\cong N$ of type $\fp$ over $\F$, and so $C_G(E)/E$ is of type $\vfp$ over $\F$. By induction,
  the centralizer of $P/E$ in $C_G(E)/E$ is of type $\vfp$ over
  $\F$. Hence the normalizer of $P/E$ in $C_G(E)/E$, which is
  $$(N_G(P)\cap C_G(E))/E,$$ is of type $\fpinfty$ over $\F$, and
  by Proposition $2.7$ in \cite{Homdim} we see that $N_G(P)\cap
  C_G(E)$ is of type $\fpinfty$ over $\F$. Then, as $$C_G(P)\leq
  N_G(P)\cap C_G(E)\leq N_G(P),$$ we conclude that $C_G(P)$ is
  also of type $\fpinfty$ over $\F$.

  Now, as $G$ is of type $\vfp$ over $\F$, it has a subgroup
  $S$ of finite index which is of type $\fp$ over $\F$. Therefore, $C_S(P)$ is of type $\fpinfty$ over $\F$,
  and as $$\cd_{\F} C_S(P)\leq\cd_{\F} S<\infty,$$ we see that $C_S(P)$ is of type
  $\fp$ over $\F$, and the result now follows.
\end{proof}

\section{Groups Possessing a Finite Dimensional Model for
$\eg$}\label{sec:EG}

In this short section we consider groups possessing a finite
dimensional model for the classifying space $\eg$ for proper
actions. The proof of Theorem \ref{finitary nilpotent result}
generalizes immediately to give us the following:

\begin{propF}
  Let $G$ be a group which possesses a finite dimensional model
  for the classifying space $\eg$ for proper actions. If
  \begin{enumerate}
    \item $G$ has finitely many conjugacy classes of finite
    subgroups; and

    \item The normalizer of every non-trivial finite subgroup of
    $G$ has cohomology almost everywhere finitary,
  \end{enumerate}

  Then $G$ has cohomology almost everywhere finitary.
\end{propF}

However, the converse is false. In fact, it is false even for the
subclass of groups of finite virtual cohomological dimension, as
we shall now show. We need the following result of Leary (Theorem
$20$ in \cite{LearyVF}):

\begin{prop}
  Let $Q$ be a finite group not of prime power order. Then there is a
  group $H$ of type $\ff$ and a group $G=H\rtimes Q$ such that $G$
  contains infinitely many conjugacy classes of subgroups
  isomorphic to $Q$ and finitely many conjugacy classes of other
  finite subgroups.
\end{prop}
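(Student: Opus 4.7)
This is Leary's construction; it combines Oliver's fixed-point theorem with an equivariant version of the Bestvina--Brady kernel construction. First I would invoke Oliver's classical theorem, which asserts that a finite group acts without a global fixed point on some finite contractible simplicial complex if and only if it is not of prime power order. Applied to $Q$, this produces a finite contractible simplicial complex $L$ equipped with a simplicial $Q$-action satisfying $L^Q=\varnothing$. Replacing $L$ by the flag completion of its barycentric subdivision, I may further assume that $L$ is a finite flag complex on which $Q$ acts admissibly, permuting the vertex set.

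Next I associate to $L$ the right-angled Artin group $A_L$, with one generator per vertex of $L$ and a commuting relation for each edge. The $Q$-action on vertices extends to an action on $A_L$, and the augmentation $\phi\colon A_L\to\Z$ sending every generator to $1$ is $Q$-equivariant, so the Bestvina--Brady kernel $H:=\ker\phi$ is $Q$-invariant. By the Bestvina--Brady theorem, finiteness and contractibility of $L$ force $H$ to be of type $\ff$. Setting $G:=H\rtimes Q$, the torsion-freeness of $A_L$ (hence of $H$) guarantees that every finite subgroup of $G$ is $G$-conjugate into $Q$, so it suffices to count conjugacy classes of copies of subgroups of $Q$.

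Let $\widetilde S_L$ be the universal cover of the Salvetti complex of $A_L$, a CAT(0) cube complex on which $A_L\rtimes Q$ acts, and let $Y$ be a generic Bestvina--Brady level set of the equivariant Morse function induced by $\phi$; then $G$ acts cocompactly on $Y$, and conjugacy classes of finite subgroups of $G$ correspond to $G$-orbits on $\bigsqcup_F Y^F$. For each $Q'\lneq Q$ of prime power order, Smith theory applied to the contractible complex $L$ forces $L^{Q'}\neq\varnothing$; cocompactness of the $G$-action, combined with the torsion-freeness of $H$, then yields only finitely many $G$-conjugacy classes of subgroups isomorphic to $Q'$. A refinement of the choice of $L$ (engineered, once again, from Oliver-type input) deals with those proper subgroups of $Q$ that fail to have prime power order.

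The main obstacle is to produce \emph{infinitely} many $G$-conjugacy classes of subgroups isomorphic to the full group $Q$. The strategy is to exhibit an explicit family $\{Q_n\}_{n\in\Z}$ of such subgroups that are pairwise conjugate inside the larger group $A_L\rtimes Q$, and show that any element $a\in A_L$ satisfying $a^{-1}Q_m a = Q_n$ must have $\phi(a)=n-m$, so membership in the kernel $H$ forces $m=n$. The construction of such a family uses precisely the feature provided by $Q$ not being of prime power order: although $L^Q=\varnothing$, non-empty fixed-point subcomplexes of proper subgroups of $Q$ do exist, and they provide the partial centralising elements in $A_L$ needed to translate copies of $Q$ along the $\phi$-direction while remaining inside $A_L\rtimes Q$. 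This rigidity of $\phi$ on the kernel is what ultimately detects the infinitely many $G$-conjugacy classes.
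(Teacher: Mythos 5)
The paper gives no proof of this proposition: it is quoted as Theorem~20 of Leary's paper \cite{LearyVF} and used as a black box to refute the converse of Proposition~F. Your sketch describes what is indeed the Leary/Leary--Nucinkis construction (Oliver's fixed-point theorem to get a finite contractible $Q$-complex $L$ with $L^Q=\emptyset$, the right-angled Artin group $A_L$ and Bestvina--Brady kernel $H=\ker\phi$, $G=H\rtimes Q$, and the height function $\phi$ to separate conjugates of $Q$), so the overall route matches the one behind the cited result.

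That said, several steps are wrong or materially incomplete as written. First, ``torsion-freeness of $H$ guarantees that every finite subgroup of $G$ is $G$-conjugate into $Q$'' is false and would contradict the very conclusion you are aiming for; what torsion-freeness gives is that the projection $G\twoheadrightarrow Q$ is injective on finite subgroups, i.e.\ every finite subgroup of $G$ is \emph{isomorphic} to a subgroup of $Q$, which is the fact you actually use. Second, and this is where the real content of Leary's Theorem~20 lies, Smith theory only yields $L^{Q'}\neq\emptyset$ for prime-power $Q'$, which is far too weak: to get finitely many conjugacy classes of the remaining finite subgroups one has to engineer $L$ so that $L^{Q'}$ is \emph{contractible} for every proper $Q'\leq Q$, including those not of prime power order, and this requires a genuine refinement of Oliver's construction rather than being dispatched by a phrase like ``a refinement of the choice of $L$ deals with those''; moreover the Bestvina--Brady level set $Y$ is not CAT$(0)$ and $G$ does not act cocompactly on the CAT$(0)$ cube complex of $A_L$, so the ``cocompactness gives finitely many conjugacy classes'' step needs a different justification. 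Third, in the final rigidity argument you only consider conjugation by $a\in A_L$; a general element of $G$ is $hq_0$ with $q_0\in Q$, and one must use the $Q$-invariance of $\phi$ to reduce to the $A_L$ case. More seriously, the argument requires $\phi$ to vanish on $C_{A_L}(Q)$ (equivalently $C_{A_L}(Q)\leq H$): if some $z\in C_{A_L}(Q)$ had $\phi(z)\neq 0$ it could be used to conjugate $Q_m$ to $Q_n$ inside $G$. That $C_{A_L}(Q)$ is trivial does not follow formally from $L^Q=\emptyset$ and must be established (Leary--Nucinkis do this via centralizer theory in RAAGs). In short: right construction, but the ``other finite subgroups'' control and the $C_{A_L}(Q)$ point are genuine gaps, not bookkeeping.
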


As $H$ is of type $\ff$, it has a finite Eilenberg--Mac Lane
space, say $Y$. As the universal cover $\widetilde{Y}$ of $Y$ is
contractible, its augmented cellular chain complex is an exact
sequence of $\Z H$-modules: $$0\ra C_n(\widetilde{Y})\ra\cdots\ra
C_0(\widetilde{Y})\ra\Z\ra 0,$$ and as $Y$ has only finitely many
cells in each dimension, we see that each $C_k(\widetilde{Y})$ is
finitely generated.

Hence, we see that $H$ has finite cohomological dimension, and is
of type $\fpinfty$. Therefore, $G$ is a group of finite virtual
cohomological dimension which is of type $\fpinfty$, and hence has
cohomology almost everywhere finitary, but $G$ does \emph{not}
have finitely many conjugacy classes of finite subgroups, which
gives us a counter-example to the converse of Proposition F above.

\section{Proof of Lemma $\ref{finite vcd lemma}$}\label{sec:Lemma
1.2}

\subsection{Proof of (i) $\Rightarrow$ (ii)}$ $

\begin{prop}
  Let $G$ be a locally (polycyclic-by-finite) group such that
  there is a finite dimensional model for $\eg$ and there is a
  bound on the orders of the finite subgroups of $G$. Then $G$ has
  finite virtual cohomological dimension.
\end{prop}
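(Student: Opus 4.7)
The strategy has two main steps: first, use the finite-dimensional model $X$ for $\eg$ to bound the Hirsch length of $G$; second, exploit the bound on orders of finite subgroups to produce a torsion-free subgroup of finite index, to which the first step then applies.

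Let $d := \dim X$ and let $M$ bound the orders of the finite subgroups of $G$. For any finitely generated subgroup $H \leq G$, $H$ is polycyclic-by-finite and hence contains a torsion-free subgroup $T$ of finite index. Restricting the $G$-action on $X$ to $T$ yields a contractible $T$-CW-complex of dimension $d$ on which $T$ acts freely, since each finite $G$-isotropy meets the torsion-free $T$ trivially. Thus $X$ is a model for $ET$, so $\cd T \leq d$. Since $T$ is torsion-free polycyclic-by-finite, the Hirsch length $h(T)$ coincides with $\cd T$, so $h(H) = h(T) \leq d$; taking the supremum over finitely generated subgroups $H$ yields $h(G) \leq d$.

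Next, I would construct a torsion-free subgroup $N$ of $G$ of finite index by choosing characteristic torsion-free subgroups $C(H) \leq H$ compatibly for each finitely generated $H \leq G$. By the Auslander--Swan theorem, each such $H$ embeds into some $\GL_n(\Z)$ with $n$ bounded solely in terms of $h(H) \leq d$ and $M$; taking $C(H)$ to be the preimage of the principal congruence subgroup of level $3$, Minkowski's theorem shows $C(H)$ is torsion-free, while $[H : C(H)] \leq |\GL_n(\mathbb F_3)|$, a uniform bound depending only on $d$ and $M$. If the assignments $H \mapsto C(H)$ can be arranged compatibly under inclusions of finitely generated subgroups, then $N := \bigcup_H C(H)$ is a torsion-free subgroup of $G$ of finite index. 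Applying the argument of the first step to the torsion-free $N$ gives $\cd N \leq d$, and hence $\vcd G \leq d < \infty$.

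The main obstacle is the construction in the second step: one must establish a uniform bound on $n$ across all finitely generated subgroups (a strengthening of Auslander--Swan depending only on $d$ and $M$) and arrange the choices $H \mapsto C(H)$ functorially, so that the union is genuinely a subgroup. An alternative would be to invoke the structure theory of locally (polycyclic-by-finite) groups of finite Hirsch length directly, for instance via Kropholler's continuity theorem (Theorem $2.1$ in \cite{Continuity:cohomologyfunctors}), to realize a torsion-free normal subgroup of finite index without passing through explicit linear representations.
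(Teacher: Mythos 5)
Your first step is correct and is actually a more elementary route than the one the paper takes. The paper tensors the cellular chain complex of $X$ with $\Q$, observes that each $C_k(X)\otimes\Q$ is $\Q G$-projective because the cell stabilizers are finite, concludes $\cd_\Q G < \infty$, and then invokes the Hillman--Linnell bound of Hirsch length by rational cohomological dimension. Your argument bounds $h(G)$ directly: for each finitely generated $H\leq G$, a torsion-free finite-index $T\leq H$ acts freely on $X$ (any cell stabilizer is finite, so it meets $T$ trivially), giving $\cd T\leq\dim X$, and $\cd T = h(T) = h(H)$. This avoids Hillman--Linnell entirely, which is a genuine simplification of that part of the argument.

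The second step, however, has a real gap, which you have honestly flagged. The congruence-subgroup construction supplies, for each finitely generated $H$, \emph{some} torsion-free $C(H)\leq H$ of uniformly bounded index, but these choices are made via unrelated embeddings of the various $H$ into $\GL_n(\Z)$; nothing forces $C(H) = H\cap C(H')$ when $H\leq H'$, and without that compatibility $\bigcup_H C(H)$ is not a subgroup. There is no canonical choice here, so this is an obstruction rather than a bookkeeping inconvenience. What is needed is exactly the structure theory you gesture at in your last sentence, but the correct reference is Wehrfritz, not Kropholler's continuity theorem (which concerns passing from finitary cohomology to a finite-dimensional model for $\eg$, a different implication). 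The paper's route is: the bound on orders of finite subgroups forces the largest locally finite normal subgroup $\tau(G)$ to be finite; Wehrfritz's theorem for elementary amenable groups of finite Hirsch length then gives a poly-(torsion-free abelian) characteristic subgroup of finite index in $G/\tau(G)$, hence in $G$; such a subgroup is torsion-free of finite cohomological dimension (equal to its Hirsch length), so $\vcd G < \infty$. Replacing your second step with this appeal to Wehrfritz would complete the argument.
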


\begin{proof}
  Let $X$ be a finite dimensional model for $\eg$, and let $r=\dim X$.
  Then, for each $k$, $C_k(X)$ is a permutation module,
  $$C_k(X)\cong\bigoplus_{\sigma\in\Sigma_k}\Z[G_{\sigma}\backslash
  G],$$ where $\Sigma_k$ is a set of $G$-orbit representatives of
  $k$-cells in $X$, and $G_{\sigma}$ is the stabilizer of
  $\sigma$, so in particular each $G_{\sigma}$ is finite. Then
  $$C_k(X)\otimes\Q\cong\bigoplus_{\sigma\in\Sigma_k}\Q\otimes_{\Q
  G_{\sigma}}\Q G,$$ and as $\Q G_{\sigma}$ is semisimple, $\Q$ is
  a projective $\Q G_{\sigma}$-module. Hence, $\Q\otimes_{\Q G_{\sigma}}\Q G$ is a
  projective $\Q G$-module, and so $C_k(X)\otimes\Q$ is a projective $\Q
  G$-module. We then have that $$0\ra
  C_r(X)\otimes\Q\ra\cdots\ra C_0(X)\otimes\Q\ra\Q\ra 0$$ is a
  projective resolution of the trivial $\Q G$-module, and hence that $G$
  has finite rational cohomological dimension.

  Now, according to Hillman and Linnell \cite{HillmanLinnell},
  the Hirsch length of an
  elementary amenable group is bounded above by its rational
  cohomological dimension, so we conclude that $G$ has finite Hirsch
  length.

  Next, let $\tau(G)$ denote the unique largest locally finite normal
  subgroup of $G$. As there is a bound on the orders of the finite
  subgroups of $G$, we see that $\tau(G)$ must be finite.

  Then, as $G$ is an elementary amenable group of finite Hirsch
  length, it follows from a result of Wehrfritz \cite{WehrfritzElAm}
  that $G/\tau(G)$ has a poly-(torsion-free abelian) characteristic
  subgroup of finite index. Hence, $G$ has a poly-(torsion-free
  abelian) characteristic subgroup, say $S$, of finite index. We see
  that $S$ has finite Hirsch length, and hence finite cohomological
  dimension. We then conclude that $G$ has finite virtual cohomological dimension,
  as required.
\end{proof}

\subsection{Proof of (ii) $\Rightarrow$ (iii)}$ $

We begin by proving the following lemma:

\begin{lemma}\label{first cohomology finite}

Let $Q$ be a finite group, and $A$ be a $\Z$-torsion-free $\Z
Q$-module of finite Hirsch length. Then $H^1(Q,A)$ is finite.

\end{lemma}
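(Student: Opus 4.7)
The plan is to realize $H^1(Q,A)$ as a quotient of $Z^1(Q,A)/|Q|\cdot Z^1(Q,A)$, where $Z^1(Q,A)$ denotes the group of $1$-cocycles $Q\to A$, and then to observe that this quotient is finite because $Z^1(Q,A)$ embeds into a finite direct power of $A$, hence is itself $\Z$-torsion-free of finite Hirsch length.

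The technical heart is the following sub-lemma about abelian groups: if $B$ is any $\Z$-torsion-free abelian group of finite Hirsch length $h$, then $B/nB$ is finite (with order at most $n^h$) for every positive integer $n$. The key step is to bound $\dim_{\mathbb{F}_p}(B/pB)\leq h$ for every prime $p$. Given $b_0,\ldots,b_h\in B$, their images in $B\otimes_{\Z}\Q$ are $\Q$-linearly dependent (since this is an $h$-dimensional $\Q$-vector space), so clearing denominators and dividing by the $\gcd$ of the coefficients yields a non-trivial integer relation $\sum n_i b_i = 0$ with $\gcd(n_i)=1$; reducing mod $p$ then gives a non-trivial $\mathbb{F}_p$-linear relation among the $b_i$ in $B/pB$. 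Since $B$ is $\Z$-torsion-free, multiplication by $p$ is injective, giving short exact sequences
\[
0 \ra B/p^{k-1}B \stackrel{p}{\ra} B/p^k B \ra B/pB \ra 0,
\]
and an easy induction on $k$ together with the Chinese Remainder Theorem yields the bound on $|B/nB|$.

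With the sub-lemma in hand, the main argument is short. A $1$-cocycle $\phi:Q\to A$ satisfies $\phi(1)=0$ and so is determined by its values on $Q\setminus\{1\}$; hence $Z^1(Q,A)$ is a subgroup of $A^{|Q|-1}$. Since $A$ is $\Z$-torsion-free of Hirsch length $h$, so is $A^{|Q|-1}$ of Hirsch length $(|Q|-1)h$, and hence so too is $Z^1(Q,A)$. Because $Q$ is finite, $H^1(Q,-)$ is annihilated by $|Q|$, so $|Q|\cdot Z^1(Q,A)\subseteq B^1(Q,A)$, and $H^1(Q,A)=Z^1(Q,A)/B^1(Q,A)$ is therefore a quotient of $Z^1(Q,A)/|Q|\cdot Z^1(Q,A)$. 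Applying the sub-lemma to $Z^1(Q,A)$ finishes the proof.

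The main obstacle is the sub-lemma on the quotients $B/nB$; once that is available, everything else is bookkeeping combining it with the standard $|Q|$-torsion property of finite group cohomology and the cocycle description of $H^1$.
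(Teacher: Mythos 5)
Your proof is correct, and it takes a genuinely different route from the paper's. The paper uses the short exact sequence $A\stackrel{|Q|}{\rightarrowtail}A\twoheadrightarrow A/|Q|A$, passes to the long exact sequence in cohomology, and (since $|Q|$ kills $H^1(Q,A)$) obtains an embedding $H^1(Q,A)\hookrightarrow H^1(Q,A/|Q|A)$; it then observes that $A/|Q|A$ is a finite abelian group and invokes the standard fact that cohomology of a finite group with finite coefficients is finite. You instead work at the cochain level: $Z^1(Q,A)$ embeds in $A^{|Q|-1}$, hence is torsion-free of finite Hirsch length, and $H^1(Q,A)$ is a quotient of $Z^1(Q,A)/|Q|\,Z^1(Q,A)$. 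Both arguments ultimately rest on the same arithmetic input, namely that a torsion-free abelian group $B$ of finite Hirsch length $h$ has $B/nB$ finite; the paper asserts this briefly (its stated justification, that $A/|Q|A$ ``has finite exponent and finite Hirsch length, hence is finite,'' is not literally sufficient as phrased, since infinite locally finite groups also have Hirsch length $0$), whereas you give a careful proof with the bound $|B/nB|\leq n^h$. So your version makes explicit a point the paper glosses over, at the cost of handling cocycles directly rather than using the long exact sequence. One tiny wording nit: $Z^1(Q,A)$ has Hirsch length \emph{at most} $(|Q|-1)h$, not necessarily equal to it, though of course only finiteness is needed.
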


\begin{proof}
  As $Q$ is finite, $H^1(Q,A)$ has exponent dividing
  the order of $Q$ (Corollary $10.2$ \S III in \cite{Brown}).
  We have the following short exact sequence: $$A\stackrel{|Q|}{\mono}
  A\stackrel{\pi}{\epi} A/|Q|A.$$ Passing to the long exact
  sequence in cohomology, we obtain the following monomorphism:
  $$H^1(Q,A)\stackrel{\pi^*}{\mono} H^1(Q,A/|Q|A).$$
  Now, as $A/|Q|A$ has finite exponent and finite Hirsch length, it is
  finite. It then follows that $H^1(Q,A)$ is finite.
\end{proof}

Next, note that all groups of finite virtual cohomological
dimension possess a finite dimensional model for $\eg$ (Exercise
\S VIII.$3$ in \cite{Brown}), so it suffices to prove the
following:

\begin{prop}

Let $G$ be a locally (polycyclic-by-finite) group of finite
virtual cohomological dimension. Then $G$ has finitely many
conjugacy classes of finite subgroups.

\end{prop}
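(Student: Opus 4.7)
The plan is to induct along a filtration of a torsion-free normal subgroup of $G$ of finite index by $G$-normal subgroups with torsion-free abelian factors, using Lemma~\ref{first cohomology finite} to control complements at each stage.

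First, I would produce a suitable filtration. Since $G$ has finite vcd, it has a torsion-free normal subgroup $V$ of finite index, and the ingredients of the (i)$\Rightarrow$(ii) argument already in hand (Hillman--Linnell, finiteness of $\tau(G)$ forced by the bound on orders of finite subgroups, and Wehrfritz's theorem on elementary amenable groups of finite Hirsch length) give $V$ the structure of a poly-(torsion-free abelian) group. After refinement by isolators, one can arrange a series
\[
V = V_0 \supseteq V_1 \supseteq \cdots \supseteq V_r = 1
\]
of subgroups normal in $G$ whose factors $A_i := V_{i-1}/V_i$ are $\mathbb{Z}$-torsion-free abelian groups of finite Hirsch length. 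Setting $Q_i := G/V_i$ gives short exact sequences $A_i \mono Q_i \epi Q_{i-1}$ with $Q_0 = G/V$ finite and $Q_r = G$.

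The induction proceeds on $i$: the claim is that each $Q_i$ has finitely many conjugacy classes of finite subgroups. The base case $i=0$ is trivial. For the inductive step, consider a finite subgroup $H \leq Q_i$; since $A_i$ is torsion-free, $H \cap A_i = 1$, so $H$ maps isomorphically onto its image $\bar H$ in $Q_{i-1}$ and is a complement of $A_i$ in the preimage $\pi^{-1}(\bar H) \leq Q_i$, where $\pi \colon Q_i \epi Q_{i-1}$. The inductive hypothesis gives only finitely many $Q_{i-1}$-conjugacy classes of such $\bar H$. For each fixed $\bar H$, the standard classification of complements of an abelian kernel shows that the $\pi^{-1}(\bar H)$-conjugacy classes of complements of $A_i$ in $\pi^{-1}(\bar H)$ are either empty or form a torsor over $H^1(\bar H, A_i)$; by Lemma~\ref{first cohomology finite} this set is finite. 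Since $Q_i$-conjugation can only merge $\pi^{-1}(\bar H)$-conjugacy classes, it follows that $Q_i$ has finitely many conjugacy classes of finite subgroups. Taking $i = r$ yields the result.

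The main obstacle is the first step: arranging the filtration to be simultaneously $G$-normal and to have $\mathbb{Z}$-torsion-free abelian factors. Wehrfritz's theorem gives a poly-(torsion-free abelian) structure on $V$, but care is needed to ensure that the successive terms are normal in all of $G$ (not merely in $V$) and that the abelian quotients remain $\mathbb{Z}$-torsion-free; refining by passing to isolators and, where necessary, taking $G$-closures addresses both points. Once the filtration is in place, the finiteness of $H^1(\bar H, A_i)$ provided by Lemma~\ref{first cohomology finite} makes the inductive bookkeeping entirely routine.
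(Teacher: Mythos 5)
Your proof is correct and is essentially the paper's own argument, merely reorganized: the paper inducts on the Hirsch length of $G$, peeling off a torsion-free abelian normal subgroup $A$ at each step and using Lemma~\ref{first cohomology finite} to bound conjugacy classes of complements in $AK_i$, while you fix the filtration in advance and induct along it. Both versions hinge on the same structural input (the poly-(torsion-free abelian)-by-finite structure from the (i)$\Rightarrow$(ii) argument) and the same $H^1$ finiteness, so the content is the same.
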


\begin{proof}

As $G$ has finite virtual cohomological dimension, it must have a
bound on the orders of its finite subgroups. Therefore, the same
argument as in the previous subsection shows that $G$ is a
poly-(torsion-free abelian)-by-finite group of finite Hirsch
length. We proceed by induction on the Hirsch length $h(G)$ of
$G$.

If $h(G)$=1, then $G$ has a torsion-free abelian normal subgroup
$A$ of finite Hirsch length such that $G/A=Q$ is finite. There is
a 1-1 correspondence between the conjugacy classes of complements
to $A$ in $G$ and $H^1(Q,A)$ (Result 11.1.3 in \cite{RobGroup}).
Therefore, by Lemma \ref{first cohomology finite}, we see that $G$
has finitely many conjugacy classes of finite subgroups.

Suppose $h(G)>1$. We know that $G$ has a torsion-free abelian
normal subgroup $A$ of finite Hirsch length. As $h(G)>h(G/A)$, we
see by induction that $G/A$ has finitely many conjugacy classes of
finite subgroups. Let $F$ be a finite subgroup of $G$, so $AF$
lies in one of finitely many conjugacy classes, say those
represented by $AK_1,\ldots,AK_m$. Then, as each $H^1(K_i,A)$ is
finite, there are only finitely many conjugacy classes of
complements to $A$ in $AK_i$, and $F$ must lie in one of those.

\end{proof}

\subsection{Proof of (iii) $\Rightarrow$ (i)}$ $

Let $G$ be a group with finitely many conjugacy classes of finite
subgroups. Then it is clear that there must be a bound on the
orders of its finite subgroups.

\bibliographystyle{amsplain}

\bibliography{Finitary}

\end{document}